 \numberwithin{equation}{section}
\DeclareMathSymbol{\minus} {\mathord}{operators}{"2D} %
\theoremstyle{plain}
\newtheorem{maintheorem}{Theorem}
\newtheorem{theorem}{Theorem}[section]
\newtheorem{lem}[theorem]{Lemma}
\newtheorem{cor}[theorem]{Corollary}
\newtheorem{proposition}[theorem]{Proposition}
\theoremstyle{definition}
\newtheorem{df}[theorem]{Definition}
\newtheorem{remark}[theorem]{Remark}
\newtheorem{example}[theorem]{Example}
\def \R {\mathbb{R}}
\def \C {\mathbb{C}}
\def \H {\mathbb{H}}
\def \Z {\mathbb{Z}}
\def \D {\mathbb{D}}
\def \M {\mathcal M}
\def \CN {\mathbb N}
\def \CO {\mathcal O}
\def \CR {\mathcal R}
\def \CS {\mathcal S}
\def \CW {\mathcal W}
\def \rk {{\mbox rank}}
\begin{document}

\title[Head and tail of the colored Jones polynomial]{Rogers-Ramanujan type identities and the head and tail of the colored Jones polynomial\\
}
\date{\today}

\author[C. Armond]{Cody Armond}
\address{Department of Mathematics, Louisiana State University,
Baton Rouge, LA 70803, USA}
\email{carmond@math.lsu.edu}

\author[O. T. Dasbach]{Oliver T. Dasbach}
\address{Department of Mathematics, Louisiana State University,
Baton Rouge, LA 70803, USA}
\email{kasten@math.lsu.edu}
\thanks {The second author was supported in part by NSF grants DMS-0806539 and DMS-0456275 (FRG). The first author was partially supported as a graduate student by NSF VIGRE grant DMS 0739382.}

\begin{abstract}
We study the head and tail of the colored Jones polynomial while  focusing mainly on alternating links. Various ways to compute the colored Jones polynomial for a given link give rise to combinatorial identities for those power series. We further show that the head and tail functions only depend on the reduced checkerboard graphs of the knot diagram. Moreover the class of head and tail functions of prime alternating links forms a monoid. 
\end {abstract}
 
\maketitle

\def\kbsm#1{\mathscr{S}_K(#1)}
\def\sgn#1;#2{\mathbb{S}_{#1,#2}} 
\def\mcg#1;#2{\Gamma_{#1,#2}} 
\def\fg#1;#2{\Pi_{#1,#2}}
\def\tb#1;#2{\mathscr{K}_{\frac{#1}{#2}}}
\def\periph{(\mathcal{\mu},\mathcal{\lambda})}
\def\ext#1{\mathscr{E}(\mathscr{#1})}
\def \qP #1 #2 #3 {(#1;#2)_{#3}}

\newcommand{\E}{\mathcal{E}}\def \R {\mathbb{R}}
\def \C {\mathbb{C}}
\def \H {\mathbb{H}}
\def \Z {\mathbb{Z}}
\def \D {\mathbb{D}}
\def \M {\mathcal M}
\def \CN {\mathbb N}
\def \CO {\mathcal O}
\def \CR {\mathcal R}
\def \CS {\mathcal S}
\def \CW {\mathcal W}
\def \rk {{\mbox rank}}
 \def \Vol {\mathrm{Vol}}
\def \tr {\mathrm{tr}}

\def\frametitle {}
\def\block {}

\section{Introduction}
One of the more fascinating objects in low dimensional topology is the colored Jones polynomial. To a knot $K$ one assigns a sequence of knot polynomials $J_{N,K}(q)$ in the Laurent ring $\Z[q,1/q]$ that is indexed by a natural number $N$. For each $N$ the colored Jones polynomial $J_{N,K}(q)$ is normalized to be $1$ for $K$ the unknot.
The ordinary Jones polynomial is $J_{2,K}(q)$. 

It was shown by Xiao-Song Lin and the second author \cite{DasbachLin:HeadAndTail} that for an alternating link $K$ the three leading coefficients of the colored Jones polynomial 
$J_N(K)$ are up to a common sign independent of $N$, for $N\geq 3$. Furthermore, the volume of the  hyperbolic link complement of an alternating link is bounded from above and below linearly in the absolute values of the second and the penultimate coefficient of the colored Jones polynomial.

It was conjectured \cite{DasbachLin:HeadAndTail} that more generally for an alternating link $K$ and every $N$ the $N$ leading coefficients of the colored Jones polynomial $J_{N,K}$ stabilize, i.e. they are - up to a sign - the $N$ leading coefficients of $J_{N+1,K}(q)$. For the right-handed trefoil and for the figure-8 knot those first $N$ coefficients are the first $N$ coefficients of the Dedekind $\eta$-function in $q=e^{2 \pi i \tau}$ (sometimes called Euler function). For the left-handed trefoil the first coefficient is $1$ followed by $N-1$ vanishing terms. A power series that describes the first $N$ coefficients of the colored Jones polynomial is called {\it tail}. The tail of $J_{N,K}(1/q)$ is called the {\it head}. For non-alternating links  a tail does not exist in general, e.g. for (negative) $(p,q)$-torus knots with $p, q>2$.
The proof of the conjecture for all alternating links was recently completed and will be posted in a separate paper by the first author \cite{Armond:HeadAndTailConjecture}. We do not make use of this result here. We learned that independently, L\^e and Garoufalidis also announced a proof of the conjecture for alternating knots \cite{Zagier:TalkAtWalterfest}. 

The main purpose of this paper is to understand the head and the tail of the colored Jones polynomial while focusing mainly on alternating knots. The paper has three parts. First, different ways to compute the colored Jones polynomial of knots and thus their head and tails lead to number theoretic identities of those power series. We give examples, and we will recover and reprove some well-known classical identities, e.g.

\begin{maintheorem}[Ramanujan]
$$\sum_{k=0}^{\infty} (-1)^k q^{(k^2+k)/2} = (q;q)_{\infty}^2 \sum_{k=0}^{\infty} \frac {q^k} {(q;q)_k^2},$$
\end{maintheorem}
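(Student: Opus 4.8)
The plan is to read off both sides of the identity as two different computations of the \emph{tail} of the colored Jones polynomial of a single alternating link $K$ --- most plausibly a twist knot or a torus link $T(2,2n)$ whose reduced checkerboard graph is a single cycle --- and then to invoke the fact, recalled in the introduction and established in this paper, that the tail of an alternating link is a well-defined power series depending only on the reduced checkerboard graph, so that the two computations must produce the same answer. Thus the proof reduces to carrying out the two computations and matching their normalizations.

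\textbf{The theta side.} First I would compute $J_{N,K}(q)$ from Habiro's cyclotomic expansion, $J_{N,K}(q)=\sum_{k\ge 0}c_{k}(q)\,\prod_{j=1}^{k}\bigl(q^{N}-q^{j}\bigr)\bigl(q^{-N}-q^{-j}\bigr)$ (the precise shape of the coefficients $c_{k}(q)$ and of the products depending on $K$), noting that the $k$-th summand vanishes for $k\ge N$. Passing to the $q$-adic limit $N\to\infty$, each factor $q^{\pm N}-q^{\mp j}$ contributes only its leading monomial $-q^{\mp j}$, so the product collapses to $(-1)^{k}q^{-(k^{2}+k)/2}$ up to a shift; since for the chosen $K$ each $c_{k}(q)$ is, to leading order, a single power of $q$, the whole $k$-th term reduces to one monomial, and summing over $k$ yields the partial theta function $\sum_{k\ge 0}(-1)^{k}q^{(k^{2}+k)/2}$, the left-hand side.

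\textbf{The Nahm side.} Next I would recompute the same tail from the Kauffman-bracket skein expansion: decorate a reduced alternating diagram of $K$ with Jones--Wenzl idempotents, fuse along the edges of the all-$B$ state using the recoupling formula, and let $N\to\infty$. By the graph-dependence of the tail only the reduced checkerboard graph $G$ contributes; fusing repeatedly around $G$ leaves a single summation index $k$ whose summand is asymptotic to $q^{k}/(q;q)_{k}^{2}$, the two denominators $(q;q)_{k}$ arising from the two Jones--Wenzl / theta-symbol normalizations introduced at each fusion, while the accompanying product of quantum-integer factors converges to $(q;q)_{\infty}^{2}$. This produces $(q;q)_{\infty}^{2}\sum_{k\ge 0}q^{k}/(q;q)_{k}^{2}$, the right-hand side, and equating the two values of the tail gives Ramanujan's identity.

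\textbf{Main obstacle.} The difficulty is bookkeeping rather than conceptual. One must (i) pin down the exact link $K$ and diagram for which these two leading-order computations collapse to precisely these closed forms --- the prefactors $c_{k}(q)$ in the cyclotomic expansion and the quantum-integer normalizations in the fusion must conspire to leave no stray sign, monomial, or $q$-shift --- and (ii) check that the $q$-adic limits in the two computations are normalized identically, so that the two sides are literally the same series and not merely equal up to an overall factor. The fusion reduction on $G$ is the technical heart, since it requires careful control of the theta-symbol asymptotics; once that is in hand, matching it against the transparent cyclotomic computation is routine. A purely $q$-series proof via the Rogers--Fine identity or a Bailey-pair argument is of course also available, but the point here is the topological derivation, for which the needed input --- existence and graph-dependence of the tail --- is exactly what has been set up above.
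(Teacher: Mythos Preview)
Your high-level strategy --- compute the tail of a single alternating link in two different ways and equate the results --- is exactly the paper's strategy. But the proposal stops short of a proof: you never identify the link, you never carry out either computation, and the specific tools you suggest are not the ones that make the argument work.

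The paper uses the (negative) $(2,4)$-torus \emph{link}, not a twist knot. For the theta side it does not use Habiro's cyclotomic expansion at all (which in its standard form is for knots, so your proposed route would require extra work for a two-component link); instead it invokes Hikami's closed formula for the colored Jones polynomial of $(2,2k)$-torus links and reads off directly that the tail is the false theta function $\Psi(q^3,q)=\sum_{k\ge 0}(-1)^k q^{(k^2+k)/2}$. For the Nahm side it does not use skein-theoretic fusion and recoupling either; it uses the $R$-matrix state-sum model (Murakami's formulation), computed explicitly on the braid $\sigma_1^{-4}$ in Example~\ref{toruslink}, and then isolates the terms that survive to the tail to obtain $(q;q)_\infty^2\sum_{l\ge 0} q^l/(q;q)_l^2$.

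So the gap is concrete: your two proposed computations are sketches whose ``bookkeeping'' you defer, and in at least one case (Habiro's expansion for a link) the bookkeeping is not routine. The paper's choices --- Hikami's formula and the $R$-matrix model --- are what make both sides collapse cleanly with no stray normalization to chase.
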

where $(q;q)_k=\prod _{j=1}^{k-1} (1-q^j)$ are the $q$-Pochhammer symbols.

Second, we will show that the head and tail functions for alternating knots only depend on the reduced checkerboard graphs of the knots:

\begin{maintheorem}
Let $K_1$ and $K_2$ be two alternating links together with alternating diagrams with $A$-checkerboard graphs (respectively $B$-checkerboard graphs) $G_1$ and $G_2$. The reduced graphs $G_1'$ and $G_2'$ are obtained from $G_1$ and $G_2$ by replacing parallel edges by single edges. Then if $G_1'=G_2'$ the tails (respectively the heads) of the colored Jones polynomials of $K_1$ and $K_2$ coincide. 
\end{maintheorem}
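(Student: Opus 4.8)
The plan is to compute the colored Jones polynomial skein-theoretically and to show directly that its extreme coefficients, and hence the tail, are governed by the reduced graph alone. Let $D$ be a reduced alternating diagram of $K$ with $A$-checkerboard graph $G$, and let $s$ be the Kauffman state of $D$ whose state graph is $G$ (the all-$A$ or the all-$B$ state, depending on the convention). I would first realize $J_{N,K}(q)$ as the framing-normalized Kauffman bracket of $D$ with every component cabled by the $(N-1)$st Jones--Wenzl idempotent $f_{N-1}$, working in the variable $A$ with $q=A^{-4}$, so that the tail is recorded by the coefficients of the extreme powers of $A$. The crossings of $D$ fall into twist regions, one for each edge of $G'$: a twist region $T_m$ is a chain of $m$ mutually parallel crossings, all of the same local type since $D$ is reduced and alternating.

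The next step is to expand every twist region. Cabling the two strands through $T_m$ by $f_{N-1}$ and resolving the crossings by the $R$-matrix and fusion writes $T_m$, in the Temperley--Lieb category, as $\sum_c \mu_c^{(m)}P_c$, summed over the admissible internal colours $c\in\{0,2,\dots,2(N-1)\}$; here the gadget $P_c$, which routes an $f_c$-coloured arc through the region, does not depend on $m$, and $\mu_c^{(m)}$ is a monomial in $A$ (the $m$-th power of the colour-$c$ twist eigenvalue), so that passing from $T_m$ to $T_{m+1}$ merely rescales $\mu_c^{(m)}$ by a fixed monomial. Substituting these expansions into $D$ presents the colored Jones polynomial as
\begin{equation*}
J_{N,K} \;=\; \sum_{(c_e)}\Big(\prod_e \mu_{c_e}^{(m_e)}\Big)\,\big\langle\, \mathcal{D}\big[(P_{c_e})_e\big]\,\big\rangle_{\mathrm{norm}},
\end{equation*}
the outer sum running over colourings $e\mapsto c_e$ of the edge set of $G'$, in which the closed skein diagram $\mathcal{D}[(P_{c_e})_e]$---the medial diagram of $G'$ with the gadgets $P_{c_e}$ inserted into its twist slots---depends only on the plane graph $G'$ and on the colours $c_e$, never on $K$ or on the multiplicities $m_e$.

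The third step is to isolate the extreme degrees. Since $D$ is reduced and alternating the state $s$ is adequate, and cabling preserves adequacy; a degree estimate of Kauffman--Murasugi--Thistlethwaite type, carried through the Jones--Wenzl idempotents, should show that in the displayed sum only the single term with every $c_e$ equal to the channel matching $s$ reaches the extreme $A$-degrees, i.e.\ contributes to the first $N$ coefficients of $J_{N,K}$. For that term $\langle \mathcal{D}[(P_{c_e})_e]\rangle_{\mathrm{norm}}$ depends only on $G'$, while $\prod_e\mu_{c_e}^{(m_e)}$, together with the framing factor (which sees only the writhe), collapses to a single monomial in $A$. Hence, for every $N$, the first $N$ coefficients of $J_{N,K}$ depend---up to an overall monomial---only on $G'$, so the tail depends only on $G'$. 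Applied to $K_1$ and $K_2$ with $G_1'=G_2'$ this yields $\operatorname{tail}(J_{N,K_1})=\operatorname{tail}(J_{N,K_2})$; the statement for heads is the same argument run on the mirror images, equivalently on the other checkerboard colouring.

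The genuinely hard step is the degree estimate just invoked: one must push the adequacy inequality through these idempotent-decorated, partially fused diagrams accurately enough to be certain that every channel other than the one matching $s$ is driven past the first $N$ coefficients. By contrast, the twist-region recoupling, the collapse of the framing monomials, and the identification of the surviving diagram with the medial diagram of $G'$ are routine bookkeeping.
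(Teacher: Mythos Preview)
Your outline is essentially the same strategy the paper uses: fuse every (negative) twist region into a sum over internal colours $c\in\{0,2,\dots,2n\}$, observe that the resulting trivalent graph depends only on the reduced checkerboard graph $G'$ and the colours, and then argue that for the first $n+1$ coefficients only the single colouring $c_e=2n$ for every edge survives. The paper's Theorem~4.2 is exactly your claim that only the top-colour term contributes to the tail.

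The one substantive difference is how the degree estimate is carried out. You invoke adequacy and a Kauffman--Murasugi--Thistlethwaite-type bound ``carried through the Jones--Wenzl idempotents,'' and you rightly flag this as the hard step. The paper does not appeal to adequacy at all; instead it breaks the estimate into three explicit pieces. It computes directly that $d(\gamma(n,n,2n))\le d(\gamma(n,n,2(n-1)))-4n$ and that the fusion coefficient $\Delta_{2j}/\theta(n,n,2j)$ drops in minimum degree by exactly $2$ at each step (your $\mu_c^{(m)}$ and part of your $P_c$). For the remaining piece---your $\langle\mathcal D[(P_{c_e})]\rangle$---it proves a small structural fact about the Jones--Wenzl idempotent (the coefficient of a crossingless matching $M$ has minimum degree at least twice the hook-length of $M$) and uses it to show that the minimum degree of the coloured graph is realized by the term where every idempotent is replaced by the identity; comparing circle counts then gives $d(\Gamma_{n,(n,\dots,n)})\le d(\Gamma_{n,(n,\dots,n-1,\dots,n)})-2$. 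Summing these three contributions yields the $4(n+1)$ gap. So where you gesture at an adequacy argument, the paper supplies a direct hands-on degree count; either route should work, but the paper's is self-contained and avoids having to adapt the KMT machinery to idempotent-decorated, partially fused diagrams.
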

More geometrically that means that if in an alternating link diagram $\pm k$ half twists are replaced by $\pm (k + l)$ half twist for some positive numbers $k$ and $l$ either the head or the tail of the colored Jones polynomial does not change. For example this explains why the tail of the colored Jones polynomial of the right-handed trefoil equals the tail of the colored Jones polynomial of the figure-8 knot.

Third, we will show a product structure on the head and tails for prime alternating links.

\begin{maintheorem}
 For an arbitrary pair of prime alternating links a new prime alternating link can be constructed whose tail of its colored Jones polynomial is the product of the tails of the colored Jones polynomial of the two links. In this sense the head and the tails of prime alternating links form a monoid.
\end{maintheorem}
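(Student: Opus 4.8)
\medskip

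\noindent\textbf{Proof strategy.}
The plan is to recast the statement as a construction on plane graphs and then invoke the theorem above that the tail of an alternating link depends only on its reduced $A$-checkerboard graph. Every reduced $2$-connected plane graph $G$ is the reduced $A$-checkerboard graph of a reduced alternating diagram (take the medial graph of $G$ and choose the crossings so that the diagram is alternating); and, by the standard primeness theory for alternating diagrams, such a diagram is prime exactly when both of its checkerboard graphs are $2$-connected. Since a plane graph is $2$-connected if and only if its dual is, this identifies the reduced $A$-checkerboard graphs of prime alternating links with the reduced $2$-connected plane graphs. By the quoted theorem it therefore suffices to produce, from reduced $2$-connected plane graphs $G_1,G_2$, a reduced $2$-connected plane graph $H$ whose associated tail function is the product of those of $G_1$ and $G_2$.

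I would build $H$, together with a link $K$ realising it, by a tangle sum. In a reduced alternating diagram $D_i$ of the prime alternating link $K_i$ with reduced $A$-graph $G_i$, pick a crossing $c_i$; it corresponds to an edge $e_i=u_iv_i$ of $G_i$, and deleting a small disc around it turns $D_i$ into a $4$-ended tangle $T_i$. Glue $T_1$ to $T_2$ along their boundary circles so that the checkerboard colourings match up, and let $K$ be the resulting closed diagram. Then $K$ is alternating; its $A$-checkerboard graph $H$ is obtained from the disjoint union $G_1\sqcup G_2$ by deleting $e_1,e_2$ and amalgamating $u_1$ with $u_2$ and $v_1$ with $v_2$; and $H$ (as well as the $B$-graph of $K$, obtained the same way from the $B$-graphs of $K_1,K_2$) is $2$-connected, because whatever the deletion of $e_i$ might disconnect is rejoined through the other summand. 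Hence $K$ is a prime alternating link and $H$ is reduced. The yardstick for the tail is the \emph{connected sum}: since $J_{N,K_1\#K_2}=J_{N,K_1}\,J_{N,K_2}$ one has $\mathrm{tail}(K_1\#K_2)=\mathrm{tail}(K_1)\,\mathrm{tail}(K_2)$, while the reduced $A$-graph of $K_1\#K_2$ is the one-vertex wedge $G_1\vee G_2$ — which has a cut vertex and so is never prime. Thus the whole theorem reduces to the single assertion that $H$ and the wedge $G_1\vee G_2$ carry the same tail function.

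To prove that, I would work in the skein-theoretic model of the tail used throughout the paper. Cabling $D_i$ by $N-1$ and inserting Jones--Wenzl idempotents, the tangle $T_i$ becomes an element $\sum_k x_i(k)\,P_k$ of the skein module of its $4$-ended region, the sum over the finitely many admissible intermediate Jones--Wenzl colours $k$ with corresponding projectors $P_k$; closing $T_i$ with $c_i$ returns $J_{N,K_i}$ up to a twist scalar and displays $\mathrm{tail}(K_i)$ as the stable normalised $q$-series carried by the top coefficient $x_i(\mathrm{top})$, whereas gluing $T_1$ to $T_2$ yields, schematically, $J_{N,K}\propto\sum_k x_1(k)x_2(k)\,w_k$ with computable weights $w_k$. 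The hard part is to show that in the tail limit $N\to\infty$ only the top colour feeds the first $N$ coefficients, the lower projectors contributing only in high $q$-degree, so that $J_{N,K}$ is governed by $x_1(\mathrm{top})\,x_2(\mathrm{top})$ and hence $\mathrm{tail}(K)=\mathrm{tail}(K_1)\,\mathrm{tail}(K_2)$ up to an overall monomial that the normalisation of the tail removes. Establishing this domination, and checking that the weights $w_k$ and the twist eigenvalues of $c_1,c_2$ are either monomials or cancel in the ratio defining the tail, is the main obstacle; morally it is the same mechanism that makes twist regions invisible to the tail, transported from a single edge to a two-strand separating arc. Granting it, the operation is closed on tail functions; the constant series $1$ occurs as the tail of a prime alternating link (for instance the left-handed trefoil) and is the identity, associativity is inherited from multiplication of power series, and the corresponding statement for heads follows by applying everything to mirror images.
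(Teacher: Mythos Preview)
Your overall strategy is sound and your $K_3$ is a legitimate prime alternating link, but you explicitly leave unproved the one step that carries all the weight: that in the expansion $T_i=\sum_k x_i(k)P_k$ over the $4$-point skein module only the top projector contributes to the first $N$ coefficients. Saying this is ``morally the same mechanism'' as twist-invariance is not a proof, and the detour through $K_1\#K_2$ and the wedge $G_1\vee G_2$ buys nothing --- comparing your $H$ to the wedge requires exactly the same dominance statement you are trying to avoid.

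The paper organises the argument so that this dominance is never needed a second time. It first applies Theorem~\ref{skein} (established via Lemmas~\ref{gamma}--\ref{graph}) to each $K_i$: modulo the first $4(n{+}1)$ coefficients, the unreduced colored Jones polynomial already equals the value of the all-top trivalent graph $\Gamma^{(i)}_{n,(n,\dots,n)}$. The paper's $K_3$ has reduced graph $G_1$ and $G_2$ glued along a \emph{shared} edge (kept, not deleted as in your construction); after Theorem~\ref{skein} that edge is an internal strand of colour $2n$. Cutting across this $2n$-strand and the two adjacent $n$-strands lands in $S(D^2,n,n,2n)$, which is \emph{one}-dimensional, generated by a single trivalent vertex. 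Hence $\Gamma^{(i)}=R_i\cdot(\text{vertex})$ for some scalars $R_i$, the glued graph evaluates to $R_1R_2\,\theta(n,n,2n)$, and all that remains is the elementary check $\theta(n,n,2n)/\Delta_n=\Delta_{2n}/\Delta_n\ \dot{=}_{4(n+1)}\ 1$. By fusing \emph{before} cutting, the paper replaces your $(n{+}1)$-dimensional $4$-point module by a $1$-dimensional $3$-point module, and the ``hard part'' you isolate dissolves into a two-line computation with no appeal to connect sums.
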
 

Lawrence and Zagier \cite{LawrenceZagier:ModularFormsQuantumInvariants} were the first to realize the occurrence of modular forms in the study of quantum invariants. Hikami \cite{Hikami:VCAsymptoticExpansion} understood that the series in the second Rogers-Ramanujan identity and in the Andrews-Gordon identities appear in the study of the colored Jones polynomial of torus knots. However, our point of view is very different in that the restriction to only the head and tail of the colored Jones polynomial allows in principle to prove combinatorial identities for heads of colored Jones polynomials of alternating knots in general, in a structured way.

Some examples of heads and tails for knots with small crossing number were recently computed by Garoufalidis, L\^e and Zagier \cite{Zagier:TalkAtWalterfest}. Many of the phenomena in their table can be explained with the methods developed here. Habiro \cite{Habiro:KnotsInPoland} computed examples of tails of the coefficient functions in the Habiro expansion of the colored Jones polynomial \cite{Habiro:Cyclotomic}. Those coefficient functions are called the reduced colored Jones polynomials.
   
The paper is structured as follows: In Section \ref{SettingTheScene} we give the necessary background from number theory and recall three combinatorial ways to compute the colored Jones polynomial. The first is a skein theoretical approach, the second a combinatorial interpretation of the first author \cite{Armond:Walks} of the quantum determinant formulation for the colored Jones polynomial of Huynh and L\^e \cite{VuLe:ColoredJonesDeterminant}. The third way is a combinatorial model based on a description of the colored Jones polynomial by Murakami \cite{Murakami:IntroToVolumeConjecture}. It generalizes ideas of Lin and Wang \cite{LinWang:RandomWalkColoredJones}. 
Section \ref{DependenceOnReducedCheckerboardGraph} will show that the head and tail functions only depend on the checkerboard graphs of the knot. Section \ref{ProductFormula} will give the product formula for heads and tails. 
In a final section we will briefly discuss (non-alternating) torus knots as examples of knots that are not alternating. 

\noindent
{\bf Acknowledgements: } Some of the initial ideas to this paper are based on work of the second author with Xiao-Song Lin. 
Moreover, we thank Pat Gilmer, Kazuo Habiro, Effie Kalfagianni, Matilde Lalin, Gregor Masbaum, Roland van der Veen and Don Zagier for helpful discussions at various occasions over the years.
The first author thanks Thang L\^e and Stavros Garoufalidis for stimulating discussions and the hospitality during a recent visit to Georgia Tech University.

Very important to the results in this paper were the search sites KnotInfo by Cha and Livingston \cite{ChaLivingston:KnotInfo} and
The On-Line Encyclopedia of Integer Sequences by Sloane \cite{Sloane:IntegerSequences}. Furthermore, Bar-Natan's Mathematica package KnotTheory \cite{BarNatan:KnotTheory} contains an implementation of an algorithm of Garoufalidis and L\^e to compute the colored Jones polynomial, which is feasible for knots with small crossing number and color $N<9$.

\section{Setting the scene}  \label{SettingTheScene}      

\subsection{Number theoretical background}

The $q$-Pochhammer symbol is defined as

$$\qP a q n :=\prod _{j=0}^{n-1} (1- a q^j)$$

and a fundamental identity is given by the Jacobi triple product identity (e.g. \cite{Wilf:JacobiTripleProduct}):
$$\sum_{m=-\infty}^{\infty} x^{m^2} y^{2m} = \prod_{n=1}^{\infty} (1-x^{2n}) (1+x^{2n-1} y^2) \left (1+\frac{x^{2n-1}} {y^2} \right ).$$

Two functions will be useful tools in our computations. The Ramanujan general theta function and the false theta function:

\begin{df} We will follow the notations in the literature:
\begin{enumerate}
\item The general (two variable) Ramanujan theta function (e.g. \cite{AndrewsBerndt:RamanujansLostNotebookI}):

\begin{eqnarray*}
f(a,b)&:=&\sum_{k=-\infty}^{\infty} a^{k(k+1)/2} b^{k (k-1)/2}\\
&=& \sum_{k=0}^{\infty} a^{k(k+1)/2} b^{k(k-1)/2} + \sum_{k=1}^{\infty} a^{k(k-1)/2} b^{k(k+1)/2}
\end{eqnarray*}

By the Jacobi triple product identity $f(a,b)$ can be expressed as:

$$f(a,b)=\qP {-a} {a b} {\infty}  \qP {-b} {a b} {\infty} \qP {a b} {a b} {\infty} .$$

In particular $f(-q^2,-q)=(q;q)_{\infty}$. 

Note that $f(a,b)=f(b,a).$

\item The false theta function (e.g. \cite{McLaughlin:RogersRamanujanSlater}):
$$\Psi(a,b):=\sum_{k=0}^{\infty} a^{k(k+1)/2} b^{k(k-1)/2} - \sum_{k=1}^{\infty} a^{k(k-1)/2} b^{k(k+1)/2}$$

Note that $\Psi(a,b)-2 = -\Psi(b,a).$

\end{enumerate}
\end{df}

\subsubsection{Rogers-Ramanujan type identities; and page 200 in Ramanujan's lost notebook}

\begin{enumerate}
\item In terms of the Ramanujan theta function the celebrated (second) Rogers-Ramanujan identity states:

$$
f(-q^4,-q) = (q;q)_{\infty} \sum_{k=0}^{\infty} \frac{q^{k^2+k}}{(q;q)_k}        
$$       

The Andrews-Gordon identities are generalizations of the second Rogers-Ramanujan identity:

\begin{equation} \label{Andrews-Gordon}
f(-q^{2k},-q)= (q;q)_{\infty} \sum_{n_1,\dots,n_{k-1}\geq 0} \frac {q^{N_1^2+\dots+N_{k-1}^2+N_1+\dots+N_{k-1}}} {(q;q)_{n_1} \cdots (q;q)_{n_{k-1}}}
\end{equation}
with $N_j$ defined as
$$N_j=n_j+\dots+n_{k-1}.$$

\item A corresponding identity for the false theta function is:
\begin{eqnarray*}
\Psi(q^3,q)&=& \sum_{k=0}^{\infty} (-1)^k q^{(k^2+k)/2}\\
&=& (q;q)_{\infty} \sum_{k=0}^{\infty} \frac{q^{k^2+k}}{(q;q)^2_k} \hspace{1cm} \mbox{ (Ramanujan's notebook, Part III, Entry 9 \cite{RamanujanNotebooks:PartIII}})\\
&=& (q;q)^2_{\infty} \sum_{k=0}^{\infty} \frac{q^k} {(q;q)_k^2} \hspace{1cm} \mbox{ (Ramanujan's lost notebook; page 200 \cite{AndrewsBerndt:RamanujansLostNotebookI})}
\end{eqnarray*}
\end{enumerate}

The equality between the series in line 1 and line 3 will be proven in this paper.

\subsection{How to compute the colored Jones polynomial?}

We briefly recall three ways to compute the colored Jones polynomial of a link that are going to be important to us. For links we assume that all
components are colored with the same color. For a link $L$ we denote by $J_{N,L}(q)$ the reduced colored Jones polynomial of the link, i.e. for an unknot $U$ the colored Jones polynomial is $J_{N,U}(q)=1.$ Furthermore, for $N=2$ the colored Jones polynomial $J_{2,K}(q)$ equals the ordinary Jones polynomial of $K$.

The first approach is the skein theoretical approach. It gives with $A^{-4}=q$ a graphical calculus to compute the (unreduced) colored Jones polynomial 
$J_{N+1,L}(q)$ up to a power of $q$.

\subsection{Skein theory} \label{SectionSkeinTheory}

For details see e.g. \cite{Lickorish:KnotTheoryBook, MasbaumVogel:3valentGraphs}

By convention an $n$ next to a component of a link indicates that the component is replaced by $n$ parallel ones. The Jones-Wenzl idempotent is indicated by
a box on a component. It can be defined as follows:

With $$\Delta_{n}:=(-1)^{n} \frac {A^{2(n+1)}-A^{-2 (n+1)}}{A^{2}-A^{-2}}$$ and
$\Delta_{n}!:= \Delta_{n} \Delta_{n-1} \dots \Delta_{1}$ the Jones-Wenzl idempotent satisfies

$$\begin{tikzpicture}[baseline=0.8cm]
\draw(0,0)--(0,1) 
node[rectangle, draw, ultra thick, fill=white]{ } --(0,2) node[right]{\scriptsize{n+1}};
\end{tikzpicture} =
\begin{tikzpicture}[baseline=0.8cm]
\draw(0,0)--(0,1) 
node[rectangle, draw, ultra thick, fill=white]{ } --(0,2) node[right]{\scriptsize{n}};
\draw(0.7,0)--(0.7,2) node[right]{\scriptsize{1}};
\end{tikzpicture} -
\left( \frac {\Delta_{n-1}} {\Delta_{n}} \right ) \, \, 
\begin{tikzpicture}[baseline=0.8cm]
\draw (0,0) node[right]{\scriptsize{n}}--(0,1) node[left]{\scriptsize{n-1}}
--(0,2) node[right]{\scriptsize{n}};
\draw[ultra thick, fill=white] (-0.2 ,0.5) rectangle (0.4, 0.65); 
\draw (0.2, 0.65) arc (180:0:0.25);
\draw (0.7,0) node[right]{1}--(0.7, 0.65);
\draw[ultra thick, fill=white] (-0.2 ,1.5) rectangle (0.4, 1.35); 
\draw (0.2, 1.35) arc (-180:0:0.25);
\draw (0.7,2) node [right]{\scriptsize{1}} --(0.7, 1.35);
\end{tikzpicture}
$$

with the properties

$$\begin{tikzpicture}[baseline=0.8cm]
\draw(0,0)--(0,1) 
node[rectangle, draw, ultra thick, fill=white]{ } --(0,2) node[right]{\scriptsize{i+j}};
\end{tikzpicture} =
\begin{tikzpicture}[baseline=0.8cm]
\draw(0,0) node[right]{\scriptsize{i+j}}--(0,0.6);
\draw[ultra thick, fill=white] (-.4,0.6) rectangle (.4, 0.8);
\draw (0.2,0.8)--
(0.2,1.4)  node[rectangle, draw, ultra thick, fill=white]{ } --
(0.2,2)node[right]{\scriptsize{j}};
\draw (-0.2,0.8)--(-0.2,2) node[right]{\scriptsize{i}};
\end{tikzpicture},
\, \qquad \qquad \qquad
\begin{tikzpicture}[baseline=0.8cm]
\draw(0,0)--(0,1) 
node[rectangle, draw, ultra thick, fill=white]{ } --(0,2) node[right]{\scriptsize{1}};
\end{tikzpicture} = \, \, 
\begin{tikzpicture}[baseline=0.8cm]
\draw(0,0)--(0,2) 
node[right]{\scriptsize{1}};
\end{tikzpicture}$$ and
$$\begin{tikzpicture}[baseline=0cm]
\draw (0,0) circle(0.8);
\draw (-0.8,0)  node[left] {\scriptsize{n}};
\draw (0.75,0)  node[rectangle, draw, ultra thick, fill=white]{ } ;
\end{tikzpicture} = \Delta_{{n}}.
$$

A triple $(a,b,c)$ admissible if 

\begin{enumerate}
\item $a+b+c$ is even
\item $|a -b | \leq c \leq a+b.$
\end{enumerate}

For an admissible triple $(a,b,c)$ a trivalent vertex is defined by:

$$
\begin{tikzpicture}[baseline=2.8cm]
\draw (1,1.6) node[right]{\scriptsize{c}}-- (1,3);
\draw (1,3)--(0,4) node[left] {\scriptsize{a}};
\draw (1,3)--(2,4) node[right] {\scriptsize{b}};
    \fill (1,3) circle (0.07);
\end{tikzpicture}
=
\begin{tikzpicture}[baseline=2.8cm]
\draw (1.7,3.5)--(1.7,4) node[right] {\scriptsize{b}};
\draw (0.3,3.5)--(0.3,4) node[left] {\scriptsize{a}};
\draw[ultra thick, fill=white] (1.4,3.35) rectangle (2.0, 3.5);
\draw[ultra thick, fill=white] (0,3.35) rectangle (0.6, 3.5);
\draw (0.45,3.35) .. controls (0.8,2.9) and (1.2,2.9) .. (1.55, 3.35);
\draw (1, 3.2) node {\scriptsize{k}};
\draw (1,1.6) node[right]{\scriptsize{c}}-- (1,2.1);
\draw[ultra thick, fill=white] (0.7,2.1) rectangle (1.3, 2.25);
\draw (0.85,2.25)-- (0.15, 3.35)  ;
\draw (0.45, 2.8) node[left] {\scriptsize{i}};
\draw(1.15,2.25)--(1.85, 3.35);
\draw (1.5,2.8) node[right] {\scriptsize{j}};
\end{tikzpicture} 
$$

where $i,j$ and $k$ are defined by  $i+k=a, j+k=b$ and $i+j=c$.
In a trivalent diagram all edges are implied to be equipped with the Jones-Wenzl idempotent. 

Fusion is given by

$$\begin{tikzpicture}[baseline=0.8cm]
 \draw (0.7,0) .. controls (0.2 ,0.6) and (0.2,1.4) .. (0.7,2) node[right]{\scriptsize{b}};
 \draw (0.35,1) node[shape=rectangle, draw, ultra thick, fill=white]{ };
\draw (-0.7,0).. controls (-.2,0.6) and (-0.2,1.4)  .. (-0.7,2) node[left]{\scriptsize{a}};
 \draw (-0.35,1) node[rectangle, draw, ultra thick, fill=white]{ };
\end{tikzpicture}
=  \sum_{c} \frac {\Delta_c} {\theta(a,b,c)} 
\begin{tikzpicture}[baseline=0.8cm]
 \draw (1,0) node[right]{\scriptsize{b}} --(0.5,0.5);
 \draw (0,0) node[left]{\scriptsize{a}} --(0.5,0.5);
 \draw (0.5,0.5)--(0.5,1) node[right]{\scriptsize{c}}-- (0.5,1.5);
\draw (0.5,1.5)--(0,2) node[left] {\scriptsize{a}};
\draw (0.5,1.5)--(1,2) node[right] {\scriptsize{b}};
   \fill (0.5,0.5) circle (0.07);
   \fill (0.5,1.5) circle (0.07);
\end{tikzpicture} 
$$

where the sum is over all $c$ such that $(a,b,c)$ is admissible.

\bigskip To define $\theta(a,b,c)$ let $a,b$ and $c$ related as above and  $x, y$ and $z$ be defined by $a=y+z, b=z+x$ and $c=x+y$ then

$$\theta(a,b,c):= \, \begin{tikzpicture}[baseline=0.8cm, rounded corners=2mm]
\draw (0,1) ellipse (0.6 and 1) ;
\draw (0.7,0.96) node[right]{\scriptsize{c}} ;
\draw (-0.7,0.96) node[left]{\scriptsize{a}};
\draw (0,0)--(0,1) node[right]{\scriptsize{b}}--(0,2);
   \fill (0,0) circle (0.07);
   \fill (0,2) circle (0.07);
\end{tikzpicture}$$

and one can show that

$$\theta(a,b,c)= \frac { \Delta_{x+y+z}! \Delta_{x-1}! \Delta_{y-1}! \Delta_{z-1}! } {\Delta_{y+z-1}! \Delta_{z+x-1}! \Delta_{x+y-1}!}.$$

Furthermore one has:

$$\begin{tikzpicture}[baseline=.5cm, rounded corners=2mm]
    \draw(0,0)-- (-.5,.5) -- (-.1,0.9);
    \draw (.1,1.1)--(.5,1.5) node[right]{\scriptsize{b}};
      \draw (0,0) -- (.5,.5)--(-.5,1.5) node[left]{\scriptsize{a}};
     \draw (0,0)--(0,-0.8) node[right]{\scriptsize{c}};
     \fill (0,0) circle (0.07);
\end{tikzpicture}  = (-1)^{\frac{a+b-c} 2} A^{a+b-c+\frac{a^2+b^2-c^2}{2}}
\begin{tikzpicture}[baseline=.5cm, rounded corners=2mm]
       \draw (0,0)--(0,-0.8) node[right]{\scriptsize{c}}; 
     \draw (0,0)--(0.5,1.5) node[right]{\scriptsize{b}};
     \draw (0,0)--(-0.5,1.5) node[left]{\scriptsize{a}};
      \fill (0,0) circle (0.07);
      \end{tikzpicture}$$

We are only interested in the list of coefficients of the colored Jones polynomial. In particular we consider polynomials up to powers of their variable. Up to a factor of $\pm A^s$ for some power $s$ that depends on the writhe of the link diagram the (unreduced) colored Jones polynomial $\tilde J_{N+1,L}(A)$ of a link $L$ can be defined as the value of the skein relation applied to the link were every component is decorated by an $N$ together with the Jones-Wenzl idempotent.  Recall that $A^{-4}=q$. To obtain the reduced colored Jones polynomial we have to divide $\tilde J_{N,L}(A)$ by its value on the unknot. Thus 
$$J_{N+1,L}(A):=\frac{\tilde J_{N+1,L}(A)} {\Delta_{N}}.$$
 
\subsubsection{Walks and the quantum determinant formulation of Huynh and L\^e}

We briefly recall the combinatorial walk model for computing the colored Jones polynomial developed in \cite{Armond:Walks} that is based on the quantum determinant formulation of Huynh and L\^e \cite{VuLe:ColoredJonesDeterminant}.For more details see \cite{Armond:Walks}.
Given a braid $\beta$ whose closure is the knot $K$, a walk along $\beta$ is defined as follows:

At each element of a subset $J$ of the lower ends of the strands of $\beta$, begin walking up the braid. Upon reaching a crossing, the strand walked along is the over strand, then there is a choice to continue along the over strand, or jump-down to the under strand and continue walking up the braid. If the strand is the under-strand of the crossing, then continue along that strand. The set of ending positions must match the set of starting positions, thus inducing a permutation $\pi$ of the set $J$. A walk is simple if it does not traverse any arc more than once.

Each walk is assigned a weight as follows: Beginning from the first element of $J$ follow the walk and record $a_{j,\pm}$, for a jump down at crossing $j$, $b_{j,\pm}$ for following the under-strand at crossing $j$, and $c_{j,\pm}$ for following the over-strand at crossing $j$. Here the $\pm$ indicates the sign of the crossing. Once the top of the braid is reached, begin again from the next element of $J$. The weight of the walk is $(-1)(-q)^{|J|+\text{inv}(\pi)}$ times the product of the previous letters with the order as explained above.

The letters $a_{j,\pm}$, $b_{j,\pm}$, and $c_{j,\pm}$ satisfy the relations:

\begin{tabular}{c c c}
$a_{j,+}b_{j,+}=b_{j,+}a_{j,+},$ & $a_{j,+}c_{j,+}=qc_{j,+}a_{j,+},$ & $b_{j,+}c_{j,+}=q^2c_{j,+}b_{j,+}$\\
$a_{j,-}b_{j,-}=q^2b_{j,-}a_{j,-},$ & $c_{j,-}a_{j,-}=qa_{j,-}c_{j,-},$ & $c_{j,-}b_{j,-}=q^2b_{j,-}c_{j,-}$\\
\end{tabular}

Also any letters with different subscripts commute.

The weights also have an evaluation:
$$\E_N(b_+^s c_+^r a_+^d)=q^{r(N-1-d)} \prod_{i=0}^{d-1}(1-q^{N-1-r-i})$$
$$\E_N(b_-^s c_-^r a_-^d)=q^{-r(N-1)}\prod_{i=0}^{d-1}(1-q^{r+i+1-N})$$
In a word with letters of different subscripts, the word is separated into pieces corresponding to each subscript and each is evaluated separately.

\begin{theorem} \cite{Armond:Walks}
The colored Jones polynomial of $K$ is 
$$J_{N,K} = q^{(N-1)(\omega(\beta)-m+1)/2}\sum_{n=0}^\infty \E_N(C^n)$$
where $\omega(\beta)$ is the writhe of $\beta$, $m$ is the number of strands in $\beta$, and $C$ is the sum of the weights of the simple walks along $\beta$ with $J \subset \{2,\ldots,m\}$.
\end {theorem}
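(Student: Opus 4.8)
The plan is to derive this identity from the quantum determinant formula of Huynh and L\^e \cite{VuLe:ColoredJonesDeterminant} by expanding that determinant into a sum over closed walks on the braid. Recall the Huynh--L\^e formula in the shape we need: for a braid $\beta$ on $m$ strands whose closure is $K$, the unknot-normalized colored Jones polynomial equals, up to the framing factor $q^{(N-1)(\omega(\beta)-m+1)/2}$, the $\E_N$-evaluation of the inverse of a quantum determinant $\overline{\det}(I_{m-1}-B_\beta)$, where $B_\beta$ is the $(m-1)\times(m-1)$ matrix obtained by stacking, from bottom to top, the local ``quantum Burau'' blocks at the successive crossings of $\beta$. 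The block at the $j$-th crossing records in its entries the three permitted local moves --- ``stay on the over-strand'' ($c_{j,\pm}$), ``continue along the under-strand'' ($b_{j,\pm}$), ``jump down from over to under'' ($a_{j,\pm}$) --- and these entries are exactly the generators subject to the stated $q$-commutation relations; $\E_N$ is the matrix coefficient in the $N$-dimensional highest-weight module, which is why it factors over crossings and truncates as in the displayed formulas for $\E_N(b_\pm^s c_\pm^r a_\pm^d)$.

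Second, I would expand the inverse quantum determinant combinatorially. The relations among $a_{j,\pm},b_{j,\pm},c_{j,\pm}$ are of ``quantum-matrix'' type, compatible with a noncommutative analogue of the MacMahon Master Theorem (equivalently, with a Neumann-series expansion), so that $\overline{\det}(I-B_\beta)^{-1}$ unfolds into $\sum_{n\geq 0}$ of a sum over closed combinatorial paths built from the entries of $B_\beta$. Unwinding what such a path is recovers the walk: the strands of $\beta$ are the vertices, one starts at a subset $J\subseteq\{2,\dots,m\}$ of bottom endpoints, walks up the braid making at each crossing exactly the over/under/jump-down choice allowed, and returns to the starting multiset, inducing a permutation $\pi$ of $J$; reading the entries along the path in the prescribed order reproduces the word in the $a,b,c$ whose value is the weight of the walk, and the prefactor $(-1)(-q)^{|J|+\mathrm{inv}(\pi)}$ is exactly the bookkeeping of the quantum determinant --- the global $(-1)$ and the $(-q)^{\mathrm{inv}(\pi)}$ from the signed, inversion-weighted sum over permutations, the $(-q)^{|J|}$ from the diagonal of $I-B_\beta$.

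Third, I would reorganize the resulting series according to how many full passes up the braid it makes. Because the local block at each crossing is linear in the $a,b,c$ and a walk proceeds monotonically in braid height, a single pass cannot retrace an arc; hence the ``one-pass'' part of the expansion is precisely the element $C$, the sum of weights of the \emph{simple} walks over all $J\subseteq\{2,\dots,m\}$, and the whole series collapses to $\sum_{n\geq 0}\E_N(C^n)$, with $C^n$ the $n$-fold concatenation, which may retrace arcs globally --- which is why simplicity is imposed only within a single factor $C$. Prepending the framing factor $q^{(N-1)(\omega(\beta)-m+1)/2}$, the standard shift that converts the blackboard-framed braid-closure normalization of \cite{VuLe:ColoredJonesDeterminant} into the $0$-framed, unknot-normalized $J_{N,K}$ (and which vanishes for the one-strand unknot, where $\omega=0$, $m=1$), gives the claimed formula. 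One must also check that for each fixed $N$ only finitely many $\E_N(C^n)$ are nonzero, so that the geometric-series step is legitimate and the right-hand side is a genuine Laurent polynomial; this follows from the truncation built into $\E_N$ --- the factors $\prod_{i=0}^{d-1}(1-q^{N-1-r-i})$ annihilate a word once enough jump-downs have accumulated at a single crossing.

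The main obstacle is the precise algebraic matching in the second and third steps: verifying that the generator relations are exactly the hypotheses under which $\overline{\det}(I-B_\beta)^{-1}$ expands into the walk sum with \emph{no} extra correction terms, and then reconciling every sign and power of $q$ --- the $(-1)$, the $(-q)^{|J|}$, the $(-q)^{\mathrm{inv}(\pi)}$, and the framing exponent --- against the conventions of \cite{VuLe:ColoredJonesDeterminant}. Establishing the per-pass simplicity reduction cleanly, and the finiteness of $\sum_n\E_N(C^n)$, are the remaining points requiring care.
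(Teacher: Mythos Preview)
The paper does not prove this theorem. It is quoted as background from \cite{Armond:Walks}, introduced with the sentence ``We briefly recall the combinatorial walk model for computing the colored Jones polynomial developed in \cite{Armond:Walks} that is based on the quantum determinant formulation of Huynh and L\^e \cite{VuLe:ColoredJonesDeterminant}. For more details see \cite{Armond:Walks}.'' There is therefore no proof in the present paper to compare your proposal against; the paper treats the formula as a black box and only \emph{uses} it (in the $(2,5)$-torus knot example and in the derivation of the right-hand side of the Andrews--Gordon identities).

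That said, your sketch is consistent with the one-line description the paper gives of the source: you begin from the Huynh--L\^e quantum determinant, expand $\overline{\det}(I-B_\beta)^{-1}$ as a geometric/MacMahon-type series, read the monomials as walks on $\beta$, and regroup into powers of the simple-walk element $C$. This is the advertised shape of the argument in \cite{Armond:Walks}. The points you flag as needing care --- the exact matching of signs and $q$-powers with the conventions of \cite{VuLe:ColoredJonesDeterminant}, the per-pass simplicity reduction, and the finiteness of $\sum_n \E_N(C^n)$ --- are genuine, and the last of these is in fact the content of the Remark immediately following the theorem in the paper. To grade your sketch against an actual proof you would have to consult \cite{Armond:Walks} directly.
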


\begin{remark}
This sum is finite due to the fact that any monomial in $C^n$ corresponding to a collection of walks with at least $N$ of the walks traversing any given point will evaluate to $0$.
\end{remark}

\begin{example}[The $(2,5)$-torus knot] \label{25torus}
The (positive) $(2,5)$-torus knot $T(2,5)$ can be expressed as the closure of the braid $\beta = (\sigma_4\sigma_3\sigma_2\sigma_1)^2$

\begin{figure}[htbp] 
  \centering
  \includegraphics[width=1.25in]{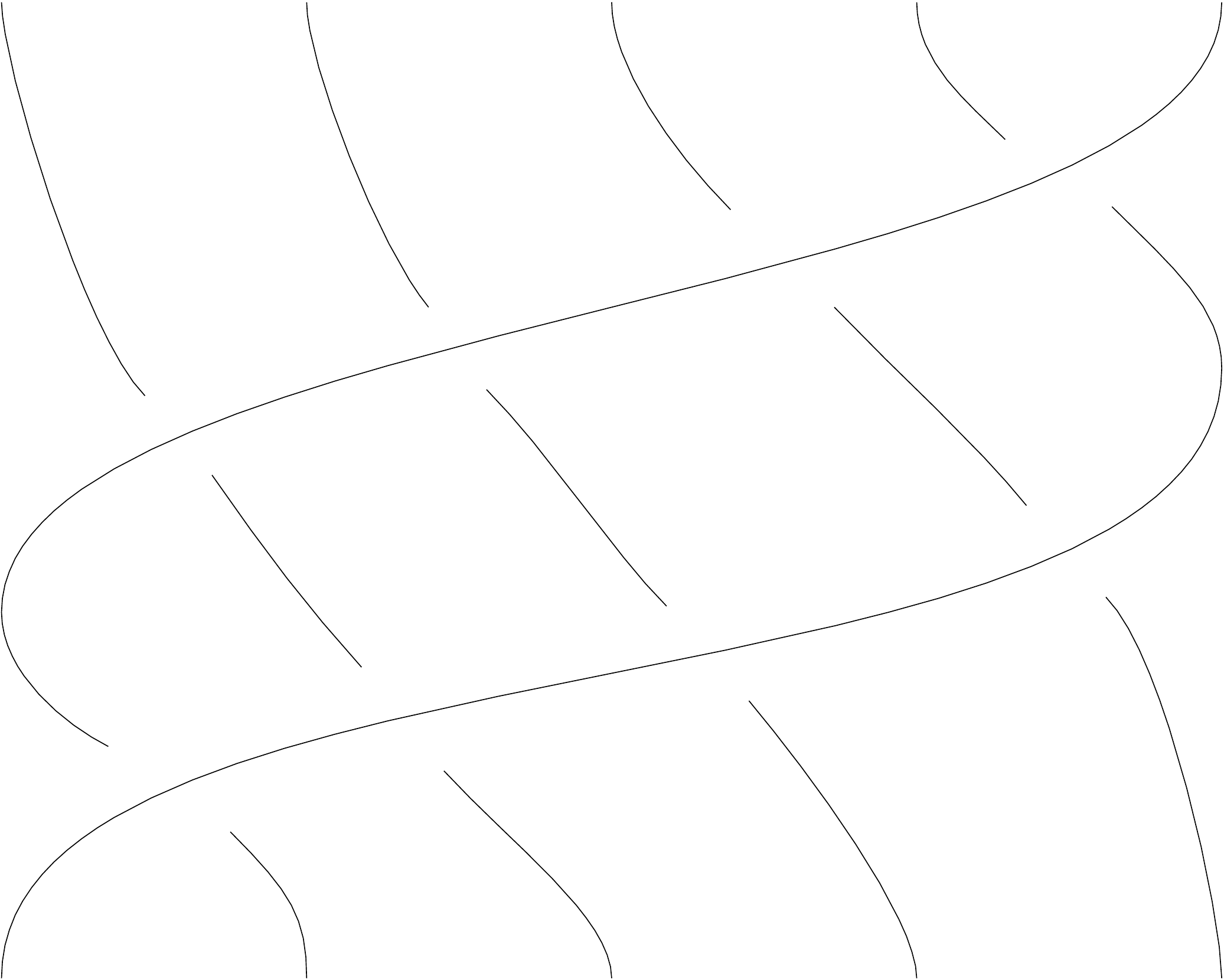} 
\end{figure}

The two simple walks along $\beta$ are the following:

\begin{center}
\begin{tabular}{c c}
\includegraphics[width=1.25in]{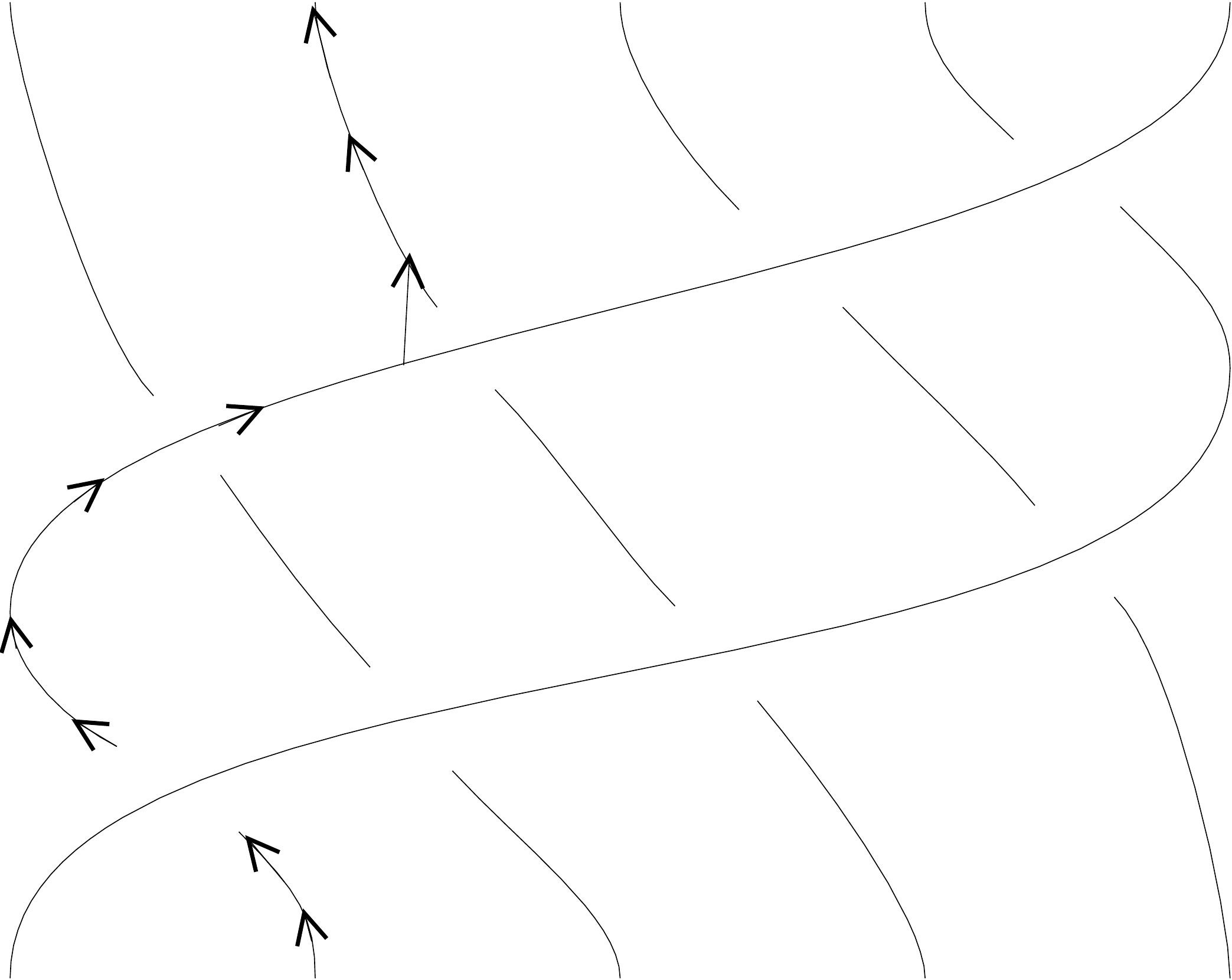}&\includegraphics[width=1.25in]{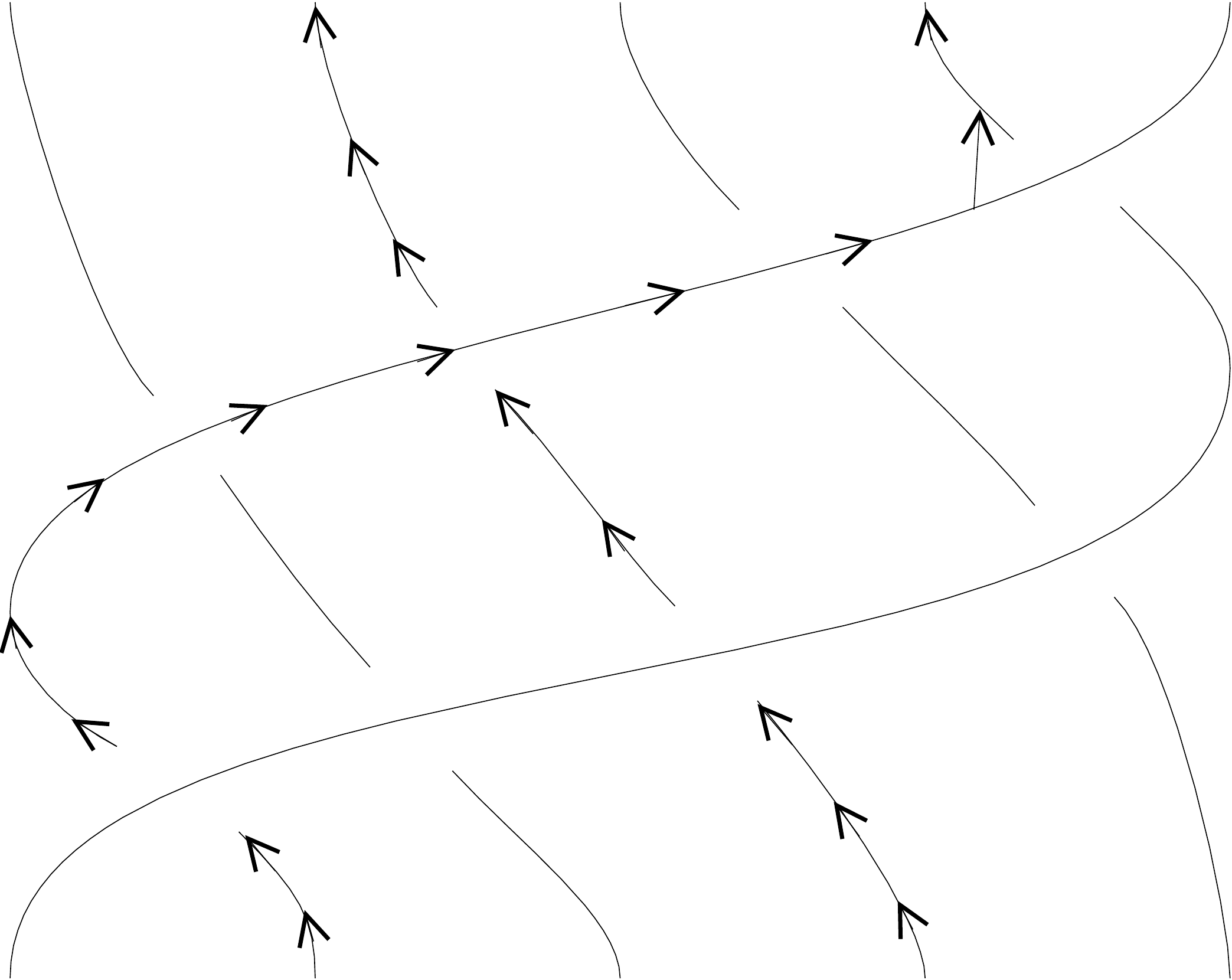}\\
   $W_1$ & $W_2$ 
\end{tabular}
\end{center}

with weights $W_1=qb_{8,+}c_{4,+}a_{3,+}$, and $W_2=q^3b_{8,+}c_{4,+}c_{3,+}c_{2,+}a_{1,+}b_{6,+}b_{3,+}$ with the relation $W_1W_2=qW_2W_1$. Thus for $K$ the $(2,5)$-torus knot the colored Jones polynomial $J_{N,K}(q)$ is, with $${n\choose k}_{q}:=\frac{(q;q)_n}{(q;q)_{n-k} (q;q)_k}$$ the $q$-binomials:

\begin{eqnarray*}
&&q^{2(N-1)}\sum_{n=0}^{N-1}\E_N((W_1+W_2)^n)\\
&=&q^{2(N-1)}\sum_{n=0}^{N-1}\sum_{k=0}^n  {n \choose k}_{q} \E_N(W_2^kW_1^{n-k})\\
&=&q^{2(N-1)}\sum_{n=0}^{N-1}\sum_{k=0}^n \left( \begin{matrix}n\\k\end{matrix}\right)_{q} q^{3k+(n-k)}\E_N(b_{8,+}^n b_{6,+}^{k} c_{4,+}^n (c_{3,+}b_{3,+})^k a_{3,+}^{n-k} c_{2,+} ^k a_{1,+}^k )\\
&=&q^{2(N-1)}\sum_{n=0}^{N-1}\sum_{k=0}^n \left( \begin{matrix}n\\k\end{matrix}\right)_{q} q^{2k+n-k(k+1)}\E_N(b_+^n) \E_N(b_+^k) \E_N(c_{+}^n) \E_N(b_{+}^k c_{+}^k a_{+}^{n-k}) \E_N(c_{+}^k) \E_N(a_{+}^k)\\
&=& q^{2(N-1)}\sum_{n=0}^{N-1} \sum_{k=0}^n \left( \begin{matrix}n\\k\end{matrix}\right)_{q} q^{nN+k(2N-1-n)} \prod_{i=0}^{n-1}(1-q^{N-1-i})
\end{eqnarray*}

\end{example}

\subsubsection{Walks and the Hitoshi Murakami's formulation of the colored Jones polynomial}

The concept of describing quantum link invariants via $R$-matrices goes back to Turaev \cite{Turaev:YangBaxter}.
In particular one can use $R$-matrices to compute the colored Jones polynomial.  The following discussion is a simplification of the description given by Hitoshi Murakami in \cite{Murakami:IntroToVolumeConjecture}. 
We will need to define the $R$-matrix, its inverse and another function $\mu$ (see also \cite{KM:cablingFormulaCoJP}):
\begin{eqnarray*}
R^{i~j}_{k~l} &=& \sum_{m=0}^{\text{Min}(N-1-i,j)} (-1)^m \delta_{l,i+m} \delta_{k,j-m}\frac{\{l\}!\{N-1-k\}!}{\{i\}!\{m\}!\{N-1-j\}!}\\
&& \times q^{-(i-(N-1)/2)(j-(N-1)/2)+m(i-j)/2+m(m+1)/4}
\end{eqnarray*}

\begin{eqnarray*}
(R^{-1})^{i~j}_{k~l} &=& \sum_{m=0}^{\text{Min}(N-1-j,i)}\delta_{l,i-m} \delta_{k,j+m}\frac{\{k\}!\{N-1-l\}!}{\{j\}!\{m\}!\{N-1-i\}!}\\
&& \times q^{(i-(N-1)/2)(j-(N-1)/2)+m(i-j)/2-m(m+1)/4}
\end{eqnarray*}

\begin{eqnarray*}
\mu_j &=& q^{-(2j-N+1)/2}
\end{eqnarray*}
where $\{m\} = q^{m/2} - q^{-m/2}$, and $\{m\}! = \prod_{i=1}^m \{i\}$. These together form an enhanced Yang-Baxter operator, but for our purposes we only need to think of them as functions of the integers $i,j,k,$ and $l$ where $0\leq i,j,k,l \leq N-1$.

To calculate the $N$-th colored Jones polynomial, take a braid $\beta$ whose closure is the knot $K$ and close each strand except for the left most one. Choose a label which is an integer between $0$ and $N-1$ for the top and bottom of the left-most strand which will be fixed throughout. A state is a choice of labels on each of the other arcs of this semi-closure. Assign to each crossing the polynomial $(R^{\epsilon_c})^{i~j}_{k~l}$ as in Figure \ref{crossR}, where $\epsilon_c$ is the sign of crossing $c$. Also assign to each arc which attaches the bottom of the braid to the top of the braid the polynomial $\mu_j$ where $j$ is the label of that arc. The weight of the state is the product of the $R$-matrix for each crossing times the product of $\mu_j$'s. Finally the colored Jones polynomial is $q^{-\omega(\beta)(N^2-1)/4}$ times the sum of the weights of each state.

\begin{figure}[h]
$$\begin{tikzpicture}[scale=1, baseline=0cm]
    \draw (-.5, -0.5)node[left]{\tiny{k}}-- (0,0) -- (.5,.5) node{\tiny{\, j}};
    \draw (.5,-0.5)node{\tiny{\, l}}  -- (0.1,-0.1);
    \draw (-0.1,0.1) -- (-.5,.5) node[left]{\tiny{i}};
   \end{tikzpicture}  \longrightarrow R^{i~j}_{k~l}
$$

$$\begin{tikzpicture}[scale=1, baseline=0cm]
    \draw (-.5, -0.5)node[left]{\tiny{k}}-- (-0.1,-0.1);
    \draw (.5,-0.5)node{\tiny{\, l}}  -- (0,0)-- (-.5,.5) node[left]{\tiny{i}};
    \draw (0.1,0.1) -- (.5,.5) node{\tiny{\, j}};
   \end{tikzpicture}  \longrightarrow (R^{-1})^{i~j}_{k~l}
$$
\caption{The $R$-matrix}
\label{crossR}
\end{figure}

The delta functions in the definition of the $R$-matrix tells us that many of the states will have weight $0$. Figure \ref{jump} gives the conditions for which a state will have a non-zero weight:

\begin{figure}[h]
$$\begin{tikzpicture}[scale=1, baseline=0cm]
    \draw (-.5, -0.5)node[left]{\tiny{k}}-- (0,0) -- (.5,.5) node{\tiny{\, j}};
    \draw (.5,-0.5)node{\tiny{\, l}}  -- (0.1,-0.1);
    \draw (-0.1,0.1) -- (-.5,.5) node[left]{\tiny{i}};
   \end{tikzpicture}  : i+j=k+l, l\geq i, k\leq j,
$$

$$\begin{tikzpicture}[scale=1, baseline=0 cm]
    \draw (-.5, -0.5)node[left]{\tiny{k}}-- (-0.1,-0.1);
    \draw (.5,-0.5)node{\tiny{\, l}}  -- (0,0)-- (-.5,.5) node[left]{\tiny{i}};
    \draw (0.1,0.1) -- (.5,.5) node{\tiny{\, j}};
   \end{tikzpicture}  : i+j=k+l, l\leq i , k\geq j.
$$
\caption{}
\label{jump}
\end{figure}

This tells us that the labels can be thought of in terms of walks. The label represents how many ``walkers'' are walking along the labeled arc, walking from top to bottom. The conditions given in Figure \ref{jump} tells us that some number of the walkers walking along the over strand can jump down onto the lower strand, but no strand can have more than $N-1$ walkers on it. The walk model for the Jones polynomial that Xiao-Song Lin and Zhenghan Wang give in \cite{LinWang:RandomWalkColoredJones} coincides with this model of the colored Jones polynomial for $N=2$.

\begin{example}[The $(2,-4)$ torus link]
\label{toruslink}
Consider the braid $\sigma_1^{-4}$. The closure of this braid is the $(2,-4)$ torus link. To calculate the colored Jones polynomial we must first choose a label for the top left strand. We can choose any integer between $0$ and $N-1$, but choosing $0$ will simplify the calculations, so we will choose $0$.

Next we must find all of the non-zero states. There an no restrictions on the top right arc, so we shall label it $j$. This forces the arcs below the first crossing to be labeled $j$ and $0$:

$$\begin{tikzpicture}[scale=0.8, baseline=0.5cm, rounded corners=2mm]
    \draw (-.5, -0.5)-- (-0.1,-0.1);
    \draw (.5,-0.5)  -- (0,0)-- (-.5,.5) node[left]{\tiny{c}} -- (-.1,0.9);
    \draw (.1,1.1)--(.5,1.5) node{\tiny{\, b}} --(-.5,2.5)node[left]{\tiny{j}} --(-0.1,2.9);
    \draw (0.1,0.1) -- (.5,.5) node{\tiny{\, d}} --(-.5,1.5) node[left]{\tiny{a}} -- (-.1,1.9);
   \draw (0.1,2.1)--(0.5,2.5) node{\tiny{\, 0}} --(-.5,3.5) node[left]{\tiny{0}};
   \draw (.1, 3.1)--(0.5,3.5) node[left]{\tiny{j}};
   \draw (0.5,-0.5) .. controls (1.5,-.9) and (1.5,3.9) .. (0.5,3.5);
   \end{tikzpicture}  
.$$

For the next crossing, some number of the $j$ walkers can jump down to the lower strand. Let us say $l$ of them stay on the upper strand and $j-l$ jump down.

$$\begin{tikzpicture}[scale=0.8, baseline=0.5cm, rounded corners=2mm]
    \draw (-.5, -0.5)-- (-0.1,-0.1);
    \draw (.5,-0.5)  -- (0,0)-- (-.5,.5) node[left]{\tiny{c}} -- (-.1,0.9);
    \draw (.1,1.1)--(.5,1.5) node{\tiny{\, l}} --(-.5,2.5)node[left]{\tiny{j}} --(-0.1,2.9);
    \draw (0.1,0.1) -- (.5,.5) node{\tiny{\, d}} --(-.5,1.5) node[left]{\tiny{j-l}} -- (-.1,1.9);
   \draw (0.1,2.1)--(0.5,2.5) node{\tiny{\, 0}} --(-.5,3.5) node[left]{\tiny{0}};
   \draw (.1, 3.1)--(0.5,3.5) node[left]{\tiny{j}};
   \draw (0.5,-0.5) .. controls (1.5,-.9) and (1.5,3.9) .. (0.5,3.5);
   \end{tikzpicture}  
.$$

For the next crossing, the same situation arises. Let us say $m$ of them stay on the upper strand, and thus $j-m$ end up on the lower strand.

\begin{figure}[h]
$$\begin{tikzpicture}[scale=0.8, baseline=0.5cm, rounded corners=2mm]
    \draw (-.5, -0.5)-- (-0.1,-0.1);
    \draw (.5,-0.5)  -- (0,0)-- (-.5,.5) node[left]{\tiny{j-m}} -- (-.1,0.9);
    \draw (.1,1.1)--(.5,1.5) node{\tiny{\, l}} --(-.5,2.5)node[left]{\tiny{j}} --(-0.1,2.9);
    \draw (0.1,0.1) -- (.5,.5) node{\tiny{\, m}} --(-.5,1.5) node[left]{\tiny{j-l}} -- (-.1,1.9);
   \draw (0.1,2.1)--(0.5,2.5) node{\tiny{\, 0}} --(-.5,3.5) node[left]{\tiny{0}};
   \draw (.1, 3.1)--(0.5,3.5) node[left]{\tiny{j}};
   \draw (0.5,-0.5) .. controls (1.5,-.9) and (1.5,3.9) .. (0.5,3.5);
   \end{tikzpicture}  
.$$
\end{figure}

However, the lowest labels must be $0$ and $j$, therefore, for the final crossing, no walkers can end on the lower strand, which means there must have been no walkers on the lower strand to begin with. Thus $m=0$.

$$\begin{tikzpicture}[scale=0.8, baseline=0.5cm, rounded corners=2mm]
    \draw (-.5, -0.5)-- (-0.1,-0.1);
    \draw (.5,-0.5)  -- (0,0)-- (-.5,.5) node[left]{\tiny{j}} -- (-.1,0.9);
    \draw (.1,1.1)--(.5,1.5) node{\tiny{\, l}} --(-.5,2.5)node[left]{\tiny{j}} --(-0.1,2.9);
    \draw (0.1,0.1) -- (.5,.5) node{\tiny{\, 0}} --(-.5,1.5) node[left]{\tiny{j-l}} -- (-.1,1.9);
   \draw (0.1,2.1)--(0.5,2.5) node{\tiny{\, 0}} --(-.5,3.5) node[left]{\tiny{0}};
   \draw (.1, 3.1)--(0.5,3.5) node[left]{\tiny{j}};
   \draw (0.5,-0.5) .. controls (1.5,-.9) and (1.5,3.9) .. (0.5,3.5);
   \end{tikzpicture}  
.$$

Now the colored Jones polynomial for the $(2,-4)$ torus link is:
\begin{eqnarray*}
J_{N,T(2,-4)}(q) &=& q^{1-N^2}\sum_{0\leq l \leq j \leq N-1} (R^{-1})^{0~j}_{j~0} (R^{-1})^{j~0}_{j-l~l} (R^{-1})^{j-l~l}_{j~0} (R^{-1})^{j~0}_{0~j} \mu_j\\
&=& q^{1-N^2}\sum_{0\leq l \leq j \leq N-1} \frac{\{N-1-l\}!\{j\}!\{N-1\}!}{\{N-1-j\}!\{l\}!\{j-l\}!\{N-1-j+l\}!}\\
&& \times q^{-(2j-N+1)/2- 3(j-(N-1)/2)(N-1)/2 -(l-j+1)(j-l)/2+(j-l-(N-1)/2)(l-(N-1)/2)}
\end{eqnarray*}

\end{example}

\section{Head and tail of the colored Jones polynomial}

We are interested in the $N$ leading coefficients of the colored Jones polynomial $J_{N,K}(q)$:

\begin{df}
For a Laurent polynomials $P_1(q)$ and a power series $P_2(q)$ we define
$$P_1(q) \dot{=}_n P_2(q)$$
if  $P_1(q)$ coincide - up to multiplication with $\pm q^s$, $s$ some power - with $P_2(q) \mod q^n$.
For example $-q^{-4} + 2 q^{-3}- 3+11 q \dot{=}_5 1-2 q+3 q^4.$
\end{df}

\begin{df}[Head and Tail of the colored Jones polynomial]
The tail of the colored Jones polynomial of a knot $K$ - if it exists - is a series $T_K(q)=\sum_{j=0}^{\infty} a_j q^j$
with
$$J_{N,K}(q) \dot {=}_N T_K(q), \mbox{ for all } N.$$

Similarly the head of the colored Jones polynomial $J_{N,K}(q)$ is defined to be - if it exist - the tail of the colored Jones polynomial of $J_{N,K}(1/q).$ In particular this means that - providing existence - the head of the colored Jones polynomial of a knot $K$ is the tail of the colored Jones polynomial of its mirror image $K^{*}$.
\end{df}

A theorem of the first author gives the existence of the head and tail in certain cases:

\begin{theorem}[\cite{Armond:Walks, Armond:HeadAndTailConjecture}] \label{Armond:Walks} Suppose a link $K$ is
\begin{enumerate}
\item a knot and the closure of a positive braid. Then the tail of $K$ is $T_{K}(q)=1.$ \label{ArmondTheoremPartI} \label{Armond:WalksPositive}
\item an alternating link. Then both the head and the tail exist. 
\end{enumerate}
\end{theorem}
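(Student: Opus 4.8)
\emph{Reductions and part (1).} Since the head of $J_{N,K}$ is by definition the tail of $J_{N,K^{*}}$, and the mirror of an alternating link is again alternating, it suffices to prove existence of the \emph{tail} in both cases. Part (1) follows from the walk-model formula above: with $K$ the closure of a positive braid $\beta$ on $m$ strands one has $J_{N,K}=q^{(N-1)(\omega(\beta)-m+1)/2}\sum_{n\ge 0}\E_N(C^{n})$, the $n=0$ term is $\E_N(1)=1$, and using the explicit evaluations $\E_N(b_{+}^{s}c_{+}^{r}a_{+}^{d})$, the prefactor $(-q)^{|J|+\mathrm{inv}(\pi)}$ with $|J|\ge 1$, and the fact that a \emph{knot} admits no nontrivial collection of simple walks that never passes over a strand, one checks (exactly as in the $T(2,5)$ computation above) that $\sum_{n\ge 1}\E_N(C^{n})$ is divisible by $q^{N}$; hence $J_{N,K}\ \dot{=}_{N}\ 1$ for all $N$ and $T_K(q)=1$. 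The rest of the plan treats part (2). A reduced alternating diagram is adequate: its all-$A$ and all-$B$ states have mutually disjoint circles, i.e.\ neither checkerboard graph has a loop. So it is enough to prove that the tail exists for every $A$-adequate diagram $D$, and then to apply this both to $D$ and to its mirror $D^{*}$ (again reduced alternating, hence $A$-adequate) to obtain the tail and the head of an alternating link.

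\emph{A skein state sum for the cabled diagram.} Work in the Kauffman bracket skein module with $A^{-4}=q$. Replace each component of $D$ by $N$ parallel copies and decorate each arc with the $N$-th Jones-Wenzl idempotent $f_N$, producing $\widetilde J_{N+1,D}(A)$ up to the usual framing factor. Group the crossings of $D$ into twist regions, which are precisely the (multi-)edges of a checkerboard graph $\mathbb{G}$ of $D$; expand each twist region by fusing the two incoming $N$-cables via the rule $\;=\sum_{c}\frac{\Delta_{c}}{\theta(a,b,c)}(\cdots)$ and then repeatedly applying the twist relation $\;=(-1)^{(a+b-c)/2}A^{a+b-c+(a^{2}+b^{2}-c^{2})/2}(\cdots)$ to extract the crossings. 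This rewrites $\widetilde J_{N+1,D}$, up to a sign and a power of $A$, as the finite state sum
\[
\sum_{\mathbf{c}}\ \Big(\prod_{e}W^{(N)}_{e}(c_{e})\Big)\,\Theta^{(N)}_{\mathbb G}(\mathbf{c}),
\]
where $\mathbf{c}=(c_{e})$ runs over admissible internal labelings of $\mathbb{G}$ (one label per edge $e$), each $W^{(N)}_{e}$ is an explicit product of twist-factors $(-1)^{(\cdot)}A^{(\cdot)}$, quantum integers $\Delta_{(\cdot)}$ and theta-symbols $\theta(N,N,c_{e})$ attached to the twist region $e$, and $\Theta^{(N)}_{\mathbb G}(\mathbf{c})$ is the evaluation of the resulting trivalent $\theta$-web on $\mathbb{G}$. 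This is essentially the skein input also used in Sections \ref{DependenceOnReducedCheckerboardGraph}--\ref{ProductFormula}.

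\emph{Stability of the bottom $N$ coefficients.} It remains to show that the lowest $N+1$ coefficients of this state sum are unchanged when $N$ is replaced by $N+1$. Because $D$ is $A$-adequate there is a unique \emph{extreme} labeling $\mathbf{c}^{\mathrm{ext}}$ (the one corresponding to the all-$A$ smoothing of $D$) realizing the minimal $q$-degree $d_{\min}(N)$ of the sum, and for it the factors $W^{(N)}_{e}(\mathbf{c}^{\mathrm{ext}})$ and $\Theta^{(N)}_{\mathbb G}(\mathbf{c}^{\mathrm{ext}})$ have bottom $q$-expansions that visibly stabilize in $N$. One then proves two estimates: (a) every non-extreme labeling contributes only in $q$-degrees $\ge d_{\min}(N)+N$; and (b) passing from the $N$-cable to the $(N+1)$-cable changes each $W_{e}$ and $\Theta_{\mathbb G}$ only in $q$-degrees $\ge d_{\min}(N)+N$. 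Both are obtained by inducting on the Jones-Wenzl recursion $f_{n+1}=f_{n}\otimes 1-\tfrac{\Delta_{n-1}}{\Delta_{n}}(\cdots)$ displayed above: inserting the correction term at a crossing of an $A$-adequate tangle yields a skein element whose contribution sits in strictly higher $q$-degree (by $A$-adequacy), and, the diagram being adequate, these gaps accumulate over all the crossings, producing a bound that grows linearly in $n$. Combining (a) and (b) gives $J_{N+1,K}\ \dot{=}_{N}\ J_{N,K}$ for every $N$, so the sequence of $N$-jets stabilizes and $T_K(q)$ exists; applying the same to $K^{*}$ yields $H_K(q)$.

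\emph{Main obstacle.} The hard part is the quantitative adequacy estimate in the last paragraph: showing that the ``error'' terms are pushed all the way to $q$-degree $\ge N$ above the minimum, and not merely past a fixed constant (which is all that is needed for the three-coefficient stabilization of \cite{DasbachLin:HeadAndTail}). This requires tracking, uniformly over all crossings of the diagram and simultaneously in the cabling parameter $N$, exactly how far the bottom of the Kauffman bracket moves each time the idempotent recursion is applied; carrying this out is the technical core of \cite{Armond:HeadAndTailConjecture}.
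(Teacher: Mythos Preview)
The paper does not contain a proof of this theorem at all: it is stated with attribution to \cite{Armond:Walks, Armond:HeadAndTailConjecture} and no argument is supplied. Indeed, the introduction says explicitly that the alternating case ``was recently completed and will be posted in a separate paper by the first author \cite{Armond:HeadAndTailConjecture}. We do not make use of this result here.'' So there is no in-paper proof to compare your proposal against.

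That said, a few comments on your sketch. Your outline for part~(1) is the approach of \cite{Armond:Walks}: one shows that in the positive-braid walk sum every $n\ge 1$ term carries a factor $\prod_{i}(1-q^{N-1-i})$ (from the $a_{+}$-evaluations) together with enough $q$-weight to sit in degree $\ge N$ above the $n=0$ term. Your phrase ``a knot admits no nontrivial collection of simple walks that never passes over a strand'' is not quite the right mechanism; what matters is that for a positive braid every simple walk contains at least one $a_{+}$, so $\E_N$ always produces the product $\prod(1-q^{N-1-i})$, and a degree count finishes it. For part~(2) you have correctly identified both the strategy (skein/fusion expansion over an adequate diagram, then a stability estimate via the Jones--Wenzl recursion) and the genuine difficulty: pushing the error terms a full $N$ units above the minimum, not just a bounded amount. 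You do not carry this out, and you say so; that is honest, but it means your proposal is an outline rather than a proof. Since the present paper likewise defers the argument to \cite{Armond:HeadAndTailConjecture}, there is nothing further to compare.
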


\begin{remark}
\begin{enumerate}
\item
Part (\ref{ArmondTheoremPartI}) of Theorem \ref{Armond:Walks} for braid positive knots cannot be extended to positive knots in general. The knot $7_5$ (see Figure \ref{Fig75}) is an example of a positive knot where the tail is 
$\neq 1$.
\item Champanerkar and Kofman \cite{ChampanerkarKofman:TailFullTwist} showed that if the closed positive braid contains a full twist in the braid group then Theorem \ref{Armond:Walks} (\ref{Armond:WalksPositive}) can be strengthened. 
\end{enumerate}
\end{remark}

\begin{figure}
\includegraphics[width=4.5cm]{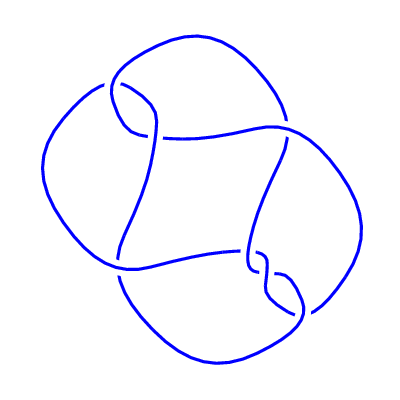}
\caption{The knot $7_5$ is positive \label{Fig75}}
\end{figure}

In fact it follows:

\begin{cor}
Every braid-positive alternating prime knot is a torus knot.
\end{cor}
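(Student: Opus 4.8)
The plan is to combine the two parts of Theorem~\ref{Armond:Walks} with the classical structure theory of reduced alternating diagrams. Let $K$ be an alternating prime knot that is the closure of a positive braid. By Theorem~\ref{Armond:Walks}(\ref{ArmondTheoremPartI}), $K$ a knot and a positive braid closure forces $T_K(q)=1$, and by part~(2) the head and tail genuinely exist, so this statement is unambiguous. I fix a reduced alternating diagram $D$ of $K$; primeness lets me take $D$ prime, so both checkerboard (Tait) graphs of $D$ are loopless and bridgeless, and moreover $2$-connected. Let $G$ be the checkerboard graph of $D$ whose reduced graph $G'$ controls the tail in the sense of the second Theorem of the introduction, with planar dual $G^{*}$ (the other checkerboard graph); the goal is to show $G$ is a bundle of parallel edges.

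\textbf{Step 1: $T_K=1$ forces $G'$ to be a tree.} Since $T_K(q)=1$, every coefficient of $T_K$ past the constant term vanishes; in particular its linear coefficient is $0$. By the Dasbach--Lin computation of the stable coefficients \cite{DasbachLin:HeadAndTail}, the linear coefficient of the tail equals, up to sign, the cycle rank $\beta_1(G')=E(G')-V(G')+1$, so $\beta_1(G')=0$; as $G'$ is connected (the diagram is connected) this says $G'$ is a tree. I would view this as the main obstacle, and the fallback argument, independent of \cite{DasbachLin:HeadAndTail}, is to expand the skein element defining $\tilde J_{N+1,K}$ over the all-$B$ Kauffman state and check directly that each independent cycle of $G'$ injects a nontrivial higher-order term into $T_K$ (compare the $n$-cycle, whose tail is the Euler function $(q;q)_{\infty}\neq 1$), so that $T_K=1$ again forces $G'$ acyclic.

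\textbf{Step 2: primeness forces $G'$ to be a single edge.} Suppose $G'$ is a tree with at least two edges; then it has a non-leaf vertex $w$. The graph $G$ is obtained from $G'$ by replacing each edge with a nonempty bundle of parallel edges, and bridgelessness of $G$ forces every such bundle to have size $\geq 2$; consequently $G$ remains disconnected after deleting $w$ together with its incident edges, so $w$ is a cut vertex of $G$. But a cut vertex in a checkerboard graph of a reduced alternating diagram exhibits $D$, hence $K$, as a non-trivial connected sum, contradicting primeness. Therefore $G'$ has exactly one edge, so $G$ consists of two vertices joined by $n$ parallel edges for some $n\geq 2$, and its planar dual $G^{*}$ is the cycle $C_n$.

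\textbf{Step 3: conclude.} A connected alternating diagram is determined up to mirror image by its pair of planar checkerboard graphs, and the pair (bundle of $n$ parallel edges, $C_n$) is realized by the standard alternating diagram of the $(2,n)$-torus link. Hence $D$ is that diagram and $K=T(2,n)$ up to mirror image, still a torus knot; since $K$ is a knot, $n$ is odd, so $K=T(2,n)$ with $n\geq 3$ odd. The only place real work is hidden is Step~1 — showing that a trivial tail forces the controlling reduced checkerboard graph to be acyclic — which I expect to settle either by citing the Dasbach--Lin second-coefficient formula or self-containedly via the all-$B$ state sum together with the nontriviality of the tail on $C_n$.
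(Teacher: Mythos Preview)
Your proof is correct and is precisely the argument the paper leaves implicit: it states the corollary with only ``In fact it follows'' and no further proof, relying on the reader to combine Theorem~\ref{Armond:Walks}(\ref{ArmondTheoremPartI}) (braid-positive $\Rightarrow T_K=1$) with the Dasbach--Lin second-coefficient formula recalled in the Remark of Section~\ref{DependenceOnReducedCheckerboardGraph} (linear coefficient of the tail $=\beta_1(G')$), and then primeness, exactly as you do. One small cleanup: in Step~2 the clause about bundle sizes $\geq 2$ is true (a single edge in $G$ would be a bridge, hence a nugatory crossing) but unnecessary---any non-leaf vertex of the tree $G'$ is already a cut vertex of $G$ regardless of multiplicities, which is all you need.
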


In \cite{EffieAtAl:GutsAndColoredJones} a generalization of this Corollary is given.

\subsection{Rogers-Ramanujan type identities coming from knots}

The following theorem is essentially a corollary to a theorem of Hugh Morton \cite{Morton:ColoredJonesTorusKnots}:
\begin{theorem}[Left-hand side of the Andrews-Gordon identities (\ref{Andrews-Gordon})]
For a (negative) $(2,2k+1)$-torus knot $K$ the tail is
$$T_K(q)=f(-q^{2k},-q).$$
\end{theorem}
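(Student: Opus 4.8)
The plan is to start from Hugh Morton's closed formula for the colored Jones polynomial of torus knots \cite{Morton:ColoredJonesTorusKnots} and read off the tail directly. Morton's formula expresses $J_{N}$ of a $(2,2k+1)$-torus knot, up to an overall power of $q$, as a quotient $G_{N}(q)/(1-q^{N})$, where $G_{N}(q)=\sum_{r}\bigl(q^{e_{1}(r)}-q^{e_{2}(r)}\bigr)$ is an explicit \emph{finite} sum, $r$ running over a symmetric set of about $N$ consecutive (half-)integers and $e_{1},e_{2}$ being quadratic in $r$ with leading coefficient $\tfrac{2k+1}{2}$ (after the standard change of variable to $q=A^{-4}$). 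I would apply this to the negative $(2,2k+1)$-torus knot --- equivalently, compute the head of the positive one, whose tail is trivial by Theorem~\ref{Armond:Walks}(\ref{ArmondTheoremPartI}) --- and show that modulo $q^{N}$ this quotient stabilizes to $f(-q^{2k},-q)$.

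The computation then splits into two parts. First, the denominator is harmless: since $(1-q^{N})^{-1}=1+q^{N}+q^{2N}+\cdots$, dividing $G_{N}$ (normalized to have nonzero constant term) by $1-q^{N}$ leaves every coefficient in degree $<N$ unchanged, so $J_{N,K}(q)\,\dot{=}_{N}\,G_{N}(q)$. Second, I would reflect $r\mapsto -r$ in the second term of $G_{N}$ so that the two sub-sums collapse into a single bilateral sum, and identify it with a truncation of
$$f(-q^{2k},-q)=\sum_{j=-\infty}^{\infty}(-1)^{j}\,q^{((2k+1)j^{2}+(2k-1)j)/2},$$
the range of truncation growing with $N$. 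Since these exponents grow quadratically in $|j|$, only finitely many terms of the series lie below any fixed degree; hence $G_{N}(q)\equiv f(-q^{2k},-q)\pmod{q^{N}}$ for every $N$, and therefore $J_{N,K}(q)\,\dot{=}_{N}\,f(-q^{2k},-q)$ for all $N$, which is exactly the statement $T_{K}(q)=f(-q^{2k},-q)$.

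The main obstacle is the bookkeeping in this identification: one must match Morton's two sub-sums --- with the correct signs, the $\pm\tfrac12$ shifts in the quadratic exponents, and the overall monomial prefactor, all of which behave differently according to the parity of $N$ --- against the displayed expansion of $f(-q^{2k},-q)$, and check that the power of $q$ stripped off along the way can be chosen uniformly in $N$, so that the normalization implicit in $\dot{=}_{N}$ is consistent for all $N$. Once Morton's finite sum is rewritten as a growing partial sum of the bilateral series, the $q$-adic convergence used above is automatic from the quadratic growth of the exponents. As a check, $k=1$ recovers the fact that the tail of the (negative) trefoil is $f(-q^{2},-q)=(q;q)_{\infty}$, Euler's pentagonal series, in agreement with the introduction and, via mirroring, with Theorem~\ref{Armond:Walks}(\ref{ArmondTheoremPartI}). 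One could alternatively obtain $T_{K}(q)$ from the walk model applied to the braid $\sigma_{1}^{-(2k+1)}$, generalizing Example~\ref{25torus}; comparing that expression with Morton's is what eventually produces the Andrews--Gordon identity (\ref{Andrews-Gordon}).
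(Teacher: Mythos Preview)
Your proposal is correct and follows essentially the same route as the paper's proof: start from Morton's formula, observe that the factor $1-q^{N}$ is invisible modulo $q^{N}$, reflect $r\mapsto -r$ in one of the two sub-sums so that they interleave into a single alternating sum, and then recognize that sum as a truncation of the bilateral series for $f(-q^{2k},-q)$, whose exponents grow quadratically in $|j|$. The paper carries out the reindexing explicitly (introducing the new index $R$ with $R=2r$ or $R=2r+1$ to merge the two sums), whereas you correctly flag this as the only nontrivial bookkeeping step; otherwise the arguments are the same.
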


\begin{proof}
Let $p:=2 k+1$. By \cite{Morton:ColoredJonesTorusKnots} we have
\begin{eqnarray*}
(q^N-1) J_{N,K}(q) &\dot{=}& \sum_{r=-(N-1)/2}^{(N-1)/2} q^{p (2 r^2+r)} \left ( q^{2 r+1}-q^{-2r} \right )\\
&\dot{=}& \sum_{r=-(N-1)/2}^{(N-1)/2} q^{p (2 r^2+r)} q^{2 r+1} - q^{p (2 r^2-r)} q^{2 r}\\
&\dot{=}& \sum_{R=-N+1}^N (-1)^R q^{p (R^2-R)/2} q^{R}\\
&\dot{=}& \sum_{R=-N+1}^N (-1)^R q^{k (R^2-R)} q^{(R^2+R)/2}
\end{eqnarray*}

Since $k(R^2-R)+(R^2+R)/2$ is increasing in $|R|$ the result follows from the definition of $f(a,b)$.
\end{proof}

The methods developed by the first author in \cite{Armond:Walks} allow to obtain the other side of the Andrews-Gordon identities:

\begin{theorem}[Right-hand side of the Andrews-Gordon identities (\ref{Andrews-Gordon})]
For a (negative) $(2, 2k+1)$-torus knot $K$ the tail is
$$T_K(q)= (q;q)_{\infty} \sum_{n_1,\dots,n_{k-1}\geq 0} \frac {q^{N_1^2+\dots+N_{k-1}^2+N_1+\dots+N_{k-1}}} {(q;q)_{n_1} \cdots (q;q)_{n_{k-1}}}
$$
with $N_j$ defined as
$$N_j=n_1+\dots+n_{j}.$$
\end{theorem}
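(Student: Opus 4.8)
The plan is to compute the tail directly from the walk model of Theorem~\ref{Armond:Walks}, exactly in the spirit of Example~\ref{25torus}, applied to the general $(2,2k+1)$-torus knot, and then to recognize the resulting power series as the multisum on the right-hand side of the Andrews--Gordon identity. First I would write the negative $(2,2k+1)$-torus knot as the closure of a braid on two strands with $2k+1$ negative crossings (or, to keep the walk combinatorics in the form used in the example, pass to the appropriate positive model and track the overall power of $q$, which is irrelevant for the tail since $\dot{=}_N$ absorbs factors $\pm q^s$). As in Example~\ref{25torus}, the simple walks along this braid are easy to enumerate: a walk is determined by a choice, at each of the $k$ ``descending'' opportunities, of how far a walker travels before jumping down, so that the simple walks are indexed by a collection of nonnegative integers; and one applies $\E_N$ to powers of the sum $C$ of their weights.

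The key steps, in order, are: (1) identify the simple walks and their weights $W_1,\dots,W_m$ together with their $q$-commutation relations $W_iW_j = q^{c_{ij}}W_jW_i$; (2) expand $\E_N(C^n) = \E_N((W_1+\cdots+W_m)^n)$ using the $q$-multinomial theorem, which produces a sum over $n_1,\dots,n_{k-1}$ (the exponents of the relevant generators) weighted by $q$-binomial coefficients ${n\choose k}_q = (q;q)_n/((q;q)_{n-k}(q;q)_k)$; (3) evaluate the monomials via the rules $\E_N(b_+^s c_+^r a_+^d) = q^{r(N-1-d)}\prod_{i=0}^{d-1}(1-q^{N-1-r-i})$, collecting the products of $(q;q)$-type factors; (4) take $N\to\infty$ coefficientwise, i.e.\ reduce $\dot{=}_N$: the cutoff from the Remark (monomials with $\geq N$ walkers through a point vanish) is exactly what truncates the sum, and as $N\to\infty$ each factor $(1-q^{N-1-r-i})\to 1$ while the $q$-binomials ${n\choose k}_q \to 1/(q;q)_k$ in the appropriate regime, and an overall prefactor assembles into $(q;q)_\infty$; (5) rename summation variables so that $N_j = n_1+\cdots+n_j$ and match the exponent $N_1^2+\cdots+N_{k-1}^2+N_1+\cdots+N_{k-1}$ appearing in the Andrews--Gordon sum. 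One may then, if desired, invoke the previous theorem (the $f(-q^{2k},-q)$ side) to observe that the two computations give a new proof of the Andrews--Gordon identities, but that is a corollary, not part of this proof.

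I expect the main obstacle to be step~(5): getting the quadratic form in the exponent to come out exactly as $\sum N_j^2 + \sum N_j$ with $N_j = n_1+\cdots+n_j$. The raw output of the walk computation will be a quadratic expression in the walk multiplicities coming from three sources --- the $q$-power in each weight $W_i$, the commutation exponents $c_{ij}$ accumulated while normal-ordering a product of $W_i$'s, and the $q$-powers produced when $\E_N$ is applied to an ordered monomial --- and reconciling that triangular quadratic form with the Andrews--Gordon one will require a careful change of variables (and a careful bookkeeping of which generators $a_\pm,b_\pm,c_\pm$ each walk contributes, since different walks share crossings). A secondary subtlety is justifying the coefficientwise limit cleanly: one must check that, for each fixed power $q^j$, only finitely many terms contribute and that for $N$ large enough past $j$ the $\dot{=}_N$-truncation has stabilized, so that the formal manipulations with $(q;q)_\infty$ and the limiting $q$-binomials are valid as an identity of power series. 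Both of these are of the same flavor as the computation already carried out in Example~\ref{25torus} (which is the $k=2$ case up to orientation), so I would model the write-up on that example and then handle the general $k$ by induction on the number of descending opportunities.
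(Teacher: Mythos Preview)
Your overall strategy --- use the walk model as in Example~\ref{25torus}, expand with the $q$-multinomial theorem, and extract the tail --- is exactly what the paper does. But several of the concrete choices you describe would not lead to the Andrews--Gordon multisum, and a couple of them are genuine errors rather than details to be filled in.

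First, the braid. Example~\ref{25torus} does \emph{not} use the two-strand braid $\sigma_1^{\pm 5}$; it uses the $5$-strand braid $(\sigma_4\sigma_3\sigma_2\sigma_1)^2$, and the paper's proof for general $k$ uses the analogous $p$-strand braid $\beta=(\sigma_{p-1}\cdots\sigma_1)^2\in B_p$ with $p=2k+1$. On this braid there are exactly $k$ simple walks $W_1,\dots,W_k$ with the clean relations $W_iW_j=qW_jW_i$ for $i<j$; the two-strand presentation does not produce this structure, and the $k-1$ independent summation indices in the Andrews--Gordon sum will not appear.

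Second, the passage from positive to negative is not ``track the overall power of $q$''. One computes $J_{N,\bar K}(q)$ for the \emph{positive} $(2,p)$-torus knot $\bar K$ and then uses $J_{N,K}(q)=J_{N,\bar K}(q^{-1})$. This substitution is what turns the products $\prod_{j=0}^{n-1}(1-q^{N-1-j})$ coming from $\E_N$ into $\prod_{j=1}^{n}(1-q^{j})$ (up to sign and a power), and that is where the prefactor $(q;q)_\infty$ actually originates --- not from limits of $q$-binomials. Your step~(4), ``each factor $(1-q^{N-1-r-i})\to 1$'', would throw away precisely the factor you need.

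Third, the tail is not obtained by a naive $N\to\infty$ limit. After the $q\mapsto q^{-1}$ substitution one checks that, for each fixed $N$, the minimum-degree contributions to $J_{N,K}(q)$ all come from the single value $n=n_1+\cdots+n_k=N-1$; this is the step that replaces the outer sum over $n$ by a constraint and yields two copies of $(q;q)_{N-1}$. One then has a sum over $n_1,\dots,n_k$ with $n_1+\cdots+n_k=N-1$, and a further estimate --- namely that $\prod_{j=n_k+1}^{N-1}(1-q^j)\dot{=}_N 1$ once the remaining exponent is accounted for --- eliminates $n_k$, leaving the free sum over $n_1,\dots,n_{k-1}$ that appears in the statement. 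Your outline jumps straight to $k-1$ indices without this reduction.

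Finally, your step~(5) is indeed the hardest, and the paper spends most of its effort there: the raw exponent $\xi'$ coming out of the evaluation is a messy quadratic in $n_1,\dots,n_k$ and $N$, and it takes a page of manipulations (repeatedly using $N_j'=N-1-N_{j-1}$ and discarding terms depending only on $N$) to reduce it to $\sum_{j=1}^{k-1}N_j(N_j+1)$. Modeling this on the $k=2$ case is reasonable, but be aware that the bookkeeping is substantially heavier for general $k$ and an induction on $k$ is not how the paper organizes it.
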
 

\begin{proof} Let $p=2k+1$ and let $\beta=(\omega_{p-1}\omega_{p-2}\ldots\omega_2\omega_1)^2\in B_p$. Thus $\hat{\beta}= \bar{K}$, the (positive) $(2,2k+1)$-torus knot. Generalizing example \ref{25torus} there are $k$ simple walks along $\beta$.

$$W_j=b_{2(p-1)}\left(\prod_{l=0}^{2(j-1)} c_{p-1-l}\right)a_{p-2j}\left(\prod_{l=1}^{j-1}b_{2(p-1-l)}b_{p-2l}\right)$$
Here since $\beta$ is a positive braid, all of the letters have a $+$ subscript, which we leave off to simplify the notation. One can check that these satisfy the relations $W_iW_j=qW_jW_i$ when $i\leq j$. Therefore we can calculate the colored Jones polynomial of $\bar{K}$, with the notation:

$$\left( \begin{matrix}&n\\n_1  & \ldots & n_{k}\end{matrix} \right)_q:= \frac {(q;q)_{n}} {(q;q)_{n_{1}} \cdots (q;q)_{n_{k}}}$$
where $n=n_1 + \ldots + n_k$.

\begin{eqnarray*}
q^{\frac{-(N-1)(p-1)}{2}}J_{N,\bar{K}}(q)&=&\sum_{n=0}^{N-1} \E_N((W_1+\ldots+W_k)^n)\\
&=&\sum_{n=0}^{N-1}\sum_{\stackrel {0\leq n_1,\ldots,n_k\leq N-1} {n_1+\ldots + n_k=n}}\left( \begin{matrix}&n\\n_1  & \ldots & n_{k}\end{matrix} \right)_q \E_N(W_k^{n_k}\ldots W_1^{n_1})\\
&=&\sum_{n=0}^{N-1}\sum_{\stackrel {0\leq n_1,\ldots,n_k\leq N-1} {n_1+\ldots + n_k=n}}\left( \begin{matrix}&n\\n_1  & \ldots & n_{k}\end{matrix} \right)_q\\ &&\times q^\xi\prod_{j=1}^{n_k}(1-q^{N-j})\prod_{j=1}^{n_{k-1}}(1-q^{N-n_k-j})\ldots\prod_{j=1}^{n_1}(1-q^{N-n_k-n_{k-1}-\ldots-n_2-j})\\
&=&\sum_{n=0}^{N-1}\sum_{\stackrel { 0\leq n_1,\ldots,n_k\leq N-1 } { n_1+\ldots + n_k=n}}\left( \begin{matrix}&n\\n_1  & \ldots & n_{k}\end{matrix} \right)_q q^\xi\prod_{j=0}^{n-1}(1-q^{N-j-1})\\
\end{eqnarray*}

Where 
\begin{eqnarray*}
\xi &=& \sum_{j=1}^{k}((N+1)j-1)n_j + \sum_{j=2}^k (N-1-n_{j-1}-N'_j-1)N'_j\\
N'_j &=& \sum_{i=j}^{k} n_j
\end{eqnarray*}

Since $J_{N,K}(q) = J_{N,\bar{K}}(q^{-1})$, we get

\begin{eqnarray*}
q^{\frac{(N-1)(p-1)}{2}}J_{N,K}(q) &=&\sum_{n=0}^{N-1}\sum_{\stackrel{0\leq n_1,\ldots,n_k\leq N-1} { n_1+\ldots + n_k=n}}\left( \begin{matrix}&n\\n_1  & \ldots & n_{k}\end{matrix} \right)_{q^{-1}} q^{-\xi}\prod_{j=0}^{n-1}(1-q^{-(N-j-1)})\\
&=& \sum_{n=0}^{N-1}\sum_{\stackrel{0\leq n_1,\ldots,n_k\leq N-1 } { n_1+\ldots + n_k=n}}\frac{\prod_{j=1}^n(1-q^{-j})}{\prod_{j=1}^{n_1}(1-q^{-j})\ldots\prod_{j=1}^{n_k}(1-q^{-j})} q^{-\xi}\prod_{j=0}^{n-1}(1-q^{-(N-j-1)})\\
&=& \sum_{n=0}^{N-1}\sum_{\stackrel{0\leq n_1,\ldots,n_k\leq N-1 } { n_1+\ldots + n_k=n}}\frac{\prod_{j=1}^n(1-q^{j})}{\prod_{j=1}^{n_1}(1-q^{j})\ldots\prod_{j=1}^{n_k}(1-q^{j})} (-1)^{\xi''}q^{\xi'}\prod_{j=0}^{n-1}(1-q^{(N-j-1)})\\
\end{eqnarray*}

Here 
$$\xi' = \sum_{j=1}^k \frac{n_j(n_j+1)}{2} - \frac{n(n+1)}{2} - \frac{N(N-1)}{2} + \frac{(N-n)(N-n-1)}{2} - \xi$$ and
$$\xi'' = \sum_{i=1}^{k} n_{i}=n.$$

The lowest $N$ terms all come from terms where $n= N-1$. In particular $(-1)^{\xi''}$ only depends on $N$. So we get

$$J_{N,K}(q)\dot{=}_N\sum_{\stackrel{0\leq n_1,\ldots,n_k\leq N-1 }{ n_1+\ldots +n_k=N-1}}\frac{\prod_{j=1}^{N-1}(1-q^{j})}{\prod_{j=1}^{n_1}(1-q^{j})\ldots\prod_{j=1}^{n_k}(1-q^{j})} q^{\xi'}\prod_{j=1}^{N-1}(1-q^{j})$$

where now $\xi' = \sum_{j=1}^k \frac{n_jn_j}{2} - \sum_{j=1}^k(N+1)jn_j - \sum_{j=2}^k(N-2-N_{j-1})N_j$ 

Since $N_j = \sum_{i=1}^j n_i$ we have $N'_{j}=N-1-N_{j-1}$ and
\begin{eqnarray*}
\xi' &=& \sum_{j=1}^k \frac{n_jn_j}{2} - \sum_{j=1}^k(N+1)(N'_{j}) - \sum_{j=2}^{k}(N-2-(N-1-N_{j-2}))(N-1-N_{j-1})\\
&=&\sum_{j=1}^k \frac{n_jn_j}{2} - \sum_{j=1}^k(N+1)(N-1-N_{j-1}) - \sum_{j=2}^{k} (N_{j-2}-1)(N-1-N_{j-1})
\end{eqnarray*}

We are only interested in $J_{N,K}(q)$ up to multiplication with powers of $q$. Thus we can simplify terms in $\xi'$ that only depend on $N$.
We denote those simplifications by $\sim$.

\begin{eqnarray*}
\xi'&\sim&\sum_{j=1}^k \frac{n_jn_j}{2} + \sum_{j=2}^k(N+1)N_{j-1} - \sum_{j=2}^{k} N_{j-1} - \sum_{j=2}^{k} N_{j-2}(N-1-N_{j-1})\\
&\sim&\sum_{j=1}^k \frac{n_jn_j}{2} + \sum_{j=2}^{k}N \, N_{j-1} - \sum_{j=3}^{k} (N-1)N_{j-2} + \sum_{j=3}^{k} N_{j-1}\, N_{j-2}\\
&\sim&\sum_{j=1}^k \frac{n_jn_j}{2} + N \, N_{k-1} + \sum_{j=2}^{k-1}[N\, N_{j-1}-(N-1)N_{j-1}] + \sum_{j=2}^{k-1} N_{j}\, N_{j-1}\\
&\sim&\sum_{j=1}^k \frac{n_jn_j}{2} + (n_k+N_{k-1}+1)N_{k-1} + \sum_{j=2}^{k-1}N_{j-1} + \sum_{j=2}^{k-1} N_{j} \, N_{j-1}\\
\end{eqnarray*}

Hence
\begin{eqnarray*}
\xi'&\sim&\sum_{j=1}^k \frac{n_jn_j}{2} + n_k \, N_{k-1} + N_{k-1}(N_{k-1}+1) + \sum_{j=2}^{k-1}n_j \, N_{j-1}+ \sum_{j=1}^{k-2}N_{j}(N_{j}+1)\\
&\sim&\sum_{j=1}^k \frac{n_jn_j}{2} + \sum_{j=2}^{k}n_j \, N_{j-1}+ \sum_{j=1}^{k-1}N_{j}(N_{j}+1)\\
&\sim&\sum_{j=1}^k \frac{n_jn_j}{2} + \sum_{j=2}^{k}[n_j \, \sum_{i=1}^{j-1}n_i]+ \sum_{j=1}^{k-1}N_{j}(N_{j}+1)\\
&\sim&\sum_{i=1}^k\sum_{j=1}^k \frac{n_jn_i}{2} + \sum_{j=1}^{k-1}N_{j}(N_{j}+1)\\
&\sim&\frac{(N-1)(N-1)}{2} + \sum_{j=1}^{k-1}N_{j}(N_{j}+1)\\
&\sim&\sum_{j=1}^{k-1}N_{j}(N_{j}+1)\\
\end{eqnarray*}

Thus
$$J_{N,K}(q) \dot{=}_N \sum_{\stackrel{0\leq n_1,\ldots,n_k\leq N-1 } {n_1+\ldots + n_k=N-1}} \frac{\prod_{j=1}^{N-1}(1-q^{j})q^{\sum_{j=1}^{k-1}N_{j}(N_{j}+1)}}{\prod_{j=1}^{n_1}(1-q^{j})\ldots\prod_{j=1}^{n_{k-1}}(1-q^{j})} \frac{\prod_{j=1}^{N-1}(1-q^{j})}{\prod_{j=1}^{n_k}(1-q^{j})}$$

But $\frac{\prod_{j=1}^{N-1}(1-q^{j})}{\prod_{j=1}^{n_k}(1-q^{j})} = \prod_{j=n_k+1}^{N-1}(1-q^{j}) = 1 - q^{n_k + 1} + \text{higher order terms}$. Thus because $\sum_{j=1}^{k-1}N_{j}(N_{j}+1) + n_k + 1 \geq N_k + 1 = N$, we get
$$J_{N,K}(q) \dot{=}_N \prod_{j=1}^{N-1}(1-q^{j})\sum_{\stackrel{0\leq n_1,\ldots,n_{k-1}\leq N-1 } { n_1+\ldots +n_{k-1} = N-1}} \frac{q^{\sum_{j=1}^{k-1}N_{j}(N_{j}+1)}}{\prod_{j=1}^{n_1}(1-q^{j})\ldots\prod_{j=1}^{n_{k-1}}(1-q^{j})}$$

 \end{proof}

\subsection{Identities coming from links}

The following theorem is essentially a corollary to a theorem of Kazuhiro Hikami \cite{Hikami:ColoredJonesTorusLinks} using the methods developed in \cite{Morton:ColoredJonesTorusKnots}.
For links we understand all components to be colored with the same color.

\begin{theorem}
For a (negative) $(2, 2k)$-torus link $L$ the tail is
$$T_{L}(q)=\Psi(q^{2k-1},q).$$
\end{theorem}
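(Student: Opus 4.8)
The plan is to mirror the structure of the preceding two theorems on $(2,2k+1)$-torus knots, now in the link setting. First I would invoke Hikami's formula \cite{Hikami:ColoredJonesTorusLinks} (derived by Morton's method \cite{Morton:ColoredJonesTorusKnots}) for the colored Jones polynomial of the $(2,2k)$-torus link $L$. After normalizing away the framing/writhe correction and any overall power of $q$ coming from the unknot normalization, this should express $(q^N - 1)J_{N,L}(q)$ — or a similar small multiple — as a finite sum of the shape $\sum_{r} q^{(2k-1)(2r^2+r)}\bigl(q^{\alpha r + \beta} \pm q^{\gamma r + \delta}\bigr)$ over a symmetric range $r \in \{-(N-1)/2,\dots,(N-1)/2\}$, entirely analogous to the knot case but with the crucial sign difference: because we are dealing with an even torus link, the two geometric series assemble with a $+$ rather than a $-$, so the reindexed single sum has the sign pattern of the \emph{false} theta function $\Psi$ rather than the honest theta function $f$.

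Concretely, the key steps are: (1) State Hikami's closed form for $J_{N,L}(q)$ for $L = T(2,2k)$; (2) perform the same substitution $R = 2r+1$ (or $R = 2r$ on the two pieces) that collapses the two sums into one sum over $R$ in a range like $\{-N+1,\dots,N\}$, tracking the exponents so that the quadratic exponent becomes $(2k-1)(R^2-R)/2 + (R^2+R)/2$ or, after simplification, something of the form $a^{k(k+1)/2}b^{k(k-1)/2}$ with $a = q^{2k-1}$, $b = q$; (3) check that the coefficient sign alternates in exactly the pattern $\sum_{k\ge0}(-1)^k(\cdots) - \sum_{k\ge1}(-1)^k(\cdots)$... no — more precisely, verify that the assembled sum is the partial sum of $\Psi(q^{2k-1},q) = \sum_{j\ge0} (q^{2k-1})^{j(j+1)/2} q^{j(j-1)/2} - \sum_{j\ge1}(q^{2k-1})^{j(j-1)/2} q^{j(j+1)/2}$; (4) confirm the exponent $(2k-1)\tfrac{j(j+1)}{2} + \tfrac{j(j-1)}{2}$ (and its transpose) is strictly increasing in $|j|$ — here $j$ plays the role of $k$ in the definition of $\Psi$ — so that as $N \to \infty$ the finitely many low-order terms stabilize and the tail is precisely $\Psi(q^{2k-1},q)$; and (5) note that the multiplicative factor (such as $q^N - 1$) contributes only higher-order corrections mod $q^N$ and an overall sign/power, hence does not affect the tail by the definition of $\dot{=}_n$.

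The main obstacle I anticipate is bookkeeping the exponents and signs through the reindexing so that the $+$ sign in the even case survives intact and produces a false theta function rather than collapsing (via the Jacobi triple product) into a genuine theta function: in the knot case the antisymmetric combination $q^{2r+1} - q^{-2r}$ is what ultimately yields $f(a,b)$, whereas here one expects a symmetric combination like $q^{2r+1} + q^{-2r}$ (or the difference of two like-signed series after reindexing), and the delicate point is matching it term by term with the two sums defining $\Psi(q^{2k-1},q)$, being careful about which index ranges over $\{0,1,\dots\}$ versus $\{1,2,\dots\}$ and about the middle term. A secondary point requiring care is that Hikami's normalization may differ from ours by a factor that is not simply a power of $q$ — if it involves something like $(q^N-1)$ or $q^{N/2} - q^{-N/2}$, one must argue as in the knot proof that multiplying the tail-stabilizing sum by such a factor changes only terms of order $\geq q^N$ (together with an irrelevant sign), which is immediate once the lowest-degree term of $\Psi(q^{2k-1},q)$ is identified as $1$ and the gap to the next term is controlled by the monotonicity in step (4).
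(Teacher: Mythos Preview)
Your overall plan---quote Hikami's closed form and match it against the definition of $\Psi(q^{2k-1},q)$---is exactly what the paper does, and it works. But your anticipated mechanics are off in a way that would cost you time. Hikami's formula already comes as a \emph{difference} of two sums over the one-sided range $r=0,\dots,N-1$:
\[
q^{-(N+1)/2}(q^{N+1}-1)\,J_{N+1,L}(q)=q^{-\frac{k(N^{2}-1)+1}{2}}\Bigl(\sum_{r=0}^{N-1} q^{kr^{2}+(k+1)r+1}-\sum_{r=0}^{N-1} q^{kr^{2}+(k-1)r}\Bigr),
\]
not a sum over a symmetric range with a plus sign. So no reindexing trick of the $R=2r,\,2r+1$ type is needed; the reason one lands on the false theta function rather than $f$ is not a sign flip in the bracket but the one-sided summation range. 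The paper simply expands $\Psi(q^{2k-1},q)$ from its definition, finds
\[
\Psi(q^{2k-1},q)=\sum_{r\ge 0} q^{kr^{2}+(k-1)r}-\sum_{r\ge 0} q^{kr^{2}+(k+1)r+1},
\]
and observes this is the negative of Hikami's parenthesis term-by-term. Your step (4) on monotonicity of exponents and step (5) on the harmless $(q^{N+1}-1)$ prefactor are correct and are (implicitly) all that remains.
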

\begin{proof}
By \cite{Hikami:ColoredJonesTorusLinks}:
\begin{eqnarray*}
q^{-(N+1)/2}(q^{N+1}-1) J_{N+1,L}(q)&=&q^{-\frac{ k (N^{2}-1)+1}{2}}  \left ( \sum_{r=0}^{N-1} q^{k r^{2}+(k+1)r +1} - \sum_{r=0}^{N-1} q^{k r^{2}+(k-1)r}   \right )
\end{eqnarray*}

Together with
\begin{eqnarray*}
\Psi(q^{2k-1},q)&=&\sum_{r=0}^{\infty} q^{(2k-1) r (r+1)/2} q^{r (r-1)/2} - \sum_{r=1}^{\infty} q^{(2k-1) r (r-1)/2} q^{r (r+1)/2}\\
&=& \sum_{r=0}^{\infty} q^{k r^{2}+(k-1) r}  - \sum_{r=1}^{\infty} q^{k(r^{2}-r)+r}\\
&=& \sum_{r=0}^{\infty} q^{k r^{2}+(k-1) r}  - \sum_{r=0}^{\infty} q^{k r^{2}+(k+1) r +1}
\end{eqnarray*}
the result follows.
\end{proof}

Thus, in particular the tail of the (negative) $(2,4)$-torus link is given by $\Psi(q^3,q).$
On the other hand a closer look at Example \ref{toruslink} gives us

\begin{theorem}
The tail of the colored Jones polynomial for the (negative) $(2,4)$-torus link is given by
$$T_{L}(q)=\Psi(q^3,q)=(q;q)_{\infty}^2 \sum_{k=0}^{\infty} \frac{q^k}{(q;q)^2_k}$$
\end{theorem}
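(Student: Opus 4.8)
The strategy is to compute the tail of $T(2,-4)$ a second time, now starting from the $R$-matrix formula derived in Example~\ref{toruslink}, and then to compare it with the preceding theorem, which already identifies $T_L(q)$ with $\Psi(q^3,q)$. First I would rewrite the formula of Example~\ref{toruslink} in terms of $q$-Pochhammer symbols, using $\{m\}!=\prod_{i=1}^m(q^{i/2}-q^{-i/2})=(-1)^m q^{-m(m+1)/4}(q;q)_m$. After this substitution the signs contributed by the seven factorials cancel (the factorial ratio becomes, up to a power of $q$, a product of $q$-binomial coefficients and $q$-Pochhammer symbols, hence a power series with constant term $1$), so the $(j,l)$-summand becomes $q^{P(j,l,N)}$ times a power series of $q$-valuation $0$, where $P(j,l,N)$ is the quadratic polynomial obtained by adding the $q$-exponent displayed in Example~\ref{toruslink}, the prefactor exponent $1-N^2$, and the $-m(m+1)/4$ corrections. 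A routine computation gives, with $M=N-1$,
$$P(j,l,N)=-\tfrac32 M+Ml-3Mj+j^2-jl-2j+l.$$

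The next step is to show that only the slice $j=N-1$ of the sum contributes to the first $N$ coefficients. On the range $0\le l\le j\le N-1$ the polynomial $P(\cdot,\cdot,N)$ is strictly decreasing in $j$ (its $j$-coefficient is $-3M+2j-l-2<0$) and strictly increasing in $l$ (its $l$-coefficient is $M-j+1\ge 1$), so $P$ attains its unique minimum $v_0:=-2M^2-\tfrac72 M$ at $(j,l)=(N-1,0)$, while for every $j\le N-2$ one has $P(j,l,N)\ge P(N-2,0,N)=v_0+N+2$. Hence, after dividing through by $q^{v_0}$, all summands with $j\le N-2$ are $O(q^{N+2})$ and are irrelevant modulo $q^N$. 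For $j=N-1$ the factorial ratio collapses (using $\{0\}!=1$ and cancelling the common $\{N-1-l\}!$) to $\{N-1\}!^2/\{l\}!^2=q^{-(N-1)N/2+l(l+1)/2}\,(q;q)_{N-1}^2/(q;q)_l^2$, and the total exponent of the $l$-th term reduces to $v_0+l$, so that summing over $l$ gives exactly $q^{v_0}(q;q)_{N-1}^2\sum_{l=0}^{N-1}q^l/(q;q)_l^2$. Therefore
$$J_{N,T(2,-4)}(q)\ \dot{=}_N\ (q;q)_{N-1}^2\sum_{l=0}^{N-1}\frac{q^l}{(q;q)_l^2}.$$

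Finally I would pass to the $q$-adic limit $N\to\infty$: since $(q;q)_{\infty}\equiv(q;q)_{N-1}\pmod{q^N}$ and $\sum_{l\ge N}q^l/(q;q)_l^2=O(q^N)$, the right-hand side above agrees with $(q;q)_{\infty}^2\sum_{k=0}^{\infty}q^k/(q;q)_k^2$ modulo $q^N$ for every $N$; by the definition of the tail this forces $T_L(q)=(q;q)_{\infty}^2\sum_{k=0}^{\infty}q^k/(q;q)_k^2$. Comparing with the previous theorem's $T_L(q)=\Psi(q^3,q)$, and noting that both power series have constant term $1$ so the sign ambiguity in $\dot{=}$ is absent, yields the asserted identity (which is precisely Ramanujan's page-$200$ identity stated in the introduction). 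I expect the main obstacle to lie in the second paragraph: carrying the exponent bookkeeping of Example~\ref{toruslink} together with the factorial-to-Pochhammer conversion through cleanly, and in particular pinning down the monotonicity of $P(j,l,N)$ tightly enough to be certain that no summand with $j\le N-2$ can reach into the first $N$ coefficients, so that the tail is genuinely governed by the $j=N-1$ slice alone.
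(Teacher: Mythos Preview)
Your proposal is correct and follows essentially the same approach as the paper's own proof: both convert the $R$-matrix formula of Example~\ref{toruslink} into $q$-Pochhammer form, observe by a minimum-degree comparison that only the $j=N-1$ summands survive modulo $q^N$ (the paper notes the gap of $N+2$ between $j=N-1$ and $j=N-2$, which is exactly your $P(N-2,0,N)-v_0$), and then collapse that slice to $(q;q)_{N-1}^2\sum_{l}q^l/(q;q)_l^2$ before passing to the $q$-adic limit. Your explicit monotonicity argument for $P(j,l,N)$ and the sign-cancellation check are just a more detailed rendering of the paper's terse degree bookkeeping.
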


\begin{proof}
Using the fact that $\{m\}! = q^{-m(m+1)/4}(q,q)_m$ we can rewrite the formula from Example \ref{toruslink} to get the following:

\begin{eqnarray*}
J_{T(2,-4),N}(q) &=& q^{1-N^2}\sum_{0\leq l \leq j \leq N-1} \frac{\{N-1-l\}!\{j\}!\{N-1\}!}{\{N-1-j\}!\{l\}!\{j-l\}!\{N-1-j+l\}!}\\
&& \times q^{-(2j-N+1)/2- 3(j-(N-1)/2)(N-1)/2 -(l-j+1)(j-l)/2+(j-l-(N-1)/2)(l-(N-1)/2)}\\
&=&q^{1-N^2}\sum_{0\leq l \leq j \leq N-1} \frac{(q;q)_{N-1-l}(q;q)_{j}(q;q)_{N-1}}{(q;q)_{N-1-j}(q;q)_{l}(q;q)_{j-l}(q;q)_{N-1-j+l}}\\
&& \times q^{(1 - 3 N)/2 + j + j^2 - 3 j N + l (N - j) + N^2}\\
\end{eqnarray*}

Note that among all the terms with a fixed value for $j$, the lowest degree comes from the terms where $l=0$. So, restricting to $l=0$, the minimum degree decreases as $j$ increases. Also the difference between the minimum degree when $j=N-2$ and when $j=N-1$ is $N+2$, thus all terms which contribute to the tail have $j=N-1$.

\begin{eqnarray*}
J_{T(2,-4),N}(q) &\dot{=}_N& \sum_{l=0}^{N-1} \frac{(q;q)_{N-1-l}(q;q)_{N-1}(q;q)_{N-1}}{(q;q)_{l}(q;q)_{N-1-l}(q;q)_{l}} q^{l}\\
&=&(q;q)_{N-1}^2\sum_{l=0}^{N-1} \frac{q^{l}}{(q;q)_{l}^2} \\
&\dot{=}_N&(q;q)_{\infty}^2\sum_{l=0}^{\infty} \frac{q^{l}}{(q;q)_{l}^2} \\
\end{eqnarray*}
\end{proof}

\section{The head and tails for alternating links only depend on the reduced checkerboard graphs}
\label {DependenceOnReducedCheckerboardGraph}

Without loss of generality we assume that all alternating knot diagrams are reduced, i.e. they do not contain nugatory crossings. 
Given an alternating knot $K$ with alternating diagram $D$ a checkerboard (black/white) shading of the faces of $D$ defines two (plane) checkerboard graphs in a natural way: The vertices of the first (second) graph are given by the white (black) faces and two vertices are connected by an edge if they meet at a crossing of the diagram. The two graphs can be distinguished from each other in the following way: The $A$-checkerboard graph is the graph where the edges correspond to arcs that overcross from the right to the left and correspondingly the $B$-checkerboard graph is the graph where the edges correspond to arcs that overcross from the left to the right. In particular the $A$-checkerboard graph of the diagram is the $B$-checkerboard graph of the diagram of the mirror image and vice versa.
The two graphs are dual to each other. We obtain the reduced checkerboard graphs by replacing parallel edges by single edges. Note, that it is easy to see that by applying flypes to the knot diagram we can assume that parallel edges in the graph are also parallel in the embedding. For the knot itself this reduction means that suitable $k$-half twists
are replaced by a single half-twist. Figure \ref{Fig920} gives an example. Note, that in general the two reduced checkerboard graphs are not dual to each other anymore, and one cannot be constructed from the other.

\begin{figure}
\includegraphics[width=5cm]{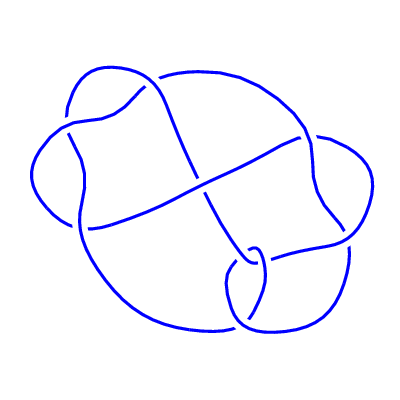}

\begin{tikzpicture}[scale=1.7] 
\path (0,1) coordinate (X1); 
\path (-1,1) coordinate (X2);
\path (-1,0) coordinate (X3);
\path (0,0) coordinate (X4);
\path (1,0) coordinate (X5);
\path (1,1) coordinate (X6);
\path (0.5,0.5) coordinate (X7);
\foreach \j in {1, ..., 7} \fill (X\j) circle (1pt); 
\draw (X1)--(X2)--(X3)--(X4)--(X5)--(X6)--(X1)--(X4)--(X7)--(X5);
\end{tikzpicture} \hspace{1cm}
\begin{tikzpicture}[scale=1.7]
\path (0,0) coordinate (X1);
\path (1,0) coordinate (X2);
\path (0,1) coordinate (X3);
\path (-1,0) coordinate (X4);
\foreach \j in {1, ..., 4} \fill (X\j) circle (1pt); 
\draw (X1)--(X2)--(X3)--(X4)--(X1)--(X3);
\end{tikzpicture}
\caption{The knot $9_{20}$ and its two reduced checkerboard graphs \label{Fig920}. The reduced $B$-checkerboard graph is on the left and the reduced $A$-checkerboard graph on the right}
\end{figure}

\begin{remark} In \cite{DasbachLin:HeadAndTail} (and compare with \cite{DL:VolumeIsh}) it was shown that for an alternating link $K$ with diagram $D$ and reduced $A$-checkerboard graph $G$ the colored Jones polynomial satisfies:
$$J_{N,K}\dot{=}_3 \, 1-a q + b q^2, \mbox { for } N \geq 3,$$
where 
$a=\beta_1(G)$, the first Betti number of $G$ and $$b={a \choose 2}-t(G)$$with $t(G)$ the number of triangles in $G$.
For the knot $9_{20}$ in Figure \ref{Fig920} $a=2$, $t(G)=2$ thus $b={2 \choose 2} - 2= 1-2=-1$ and the colored Jones polynomial for $N \geq 3$ starts with:
$$J_{N,K}\dot{=}_3 \, 1-2 q - q^2.$$

As an application \cite{DasbachLin:HeadAndTail} one sees that the volume of the  hyperbolic link complement of an alternating link is bounded from above and below linearly in the absolute values of the second and the penultimate coefficient of the colored Jones polynomial. This was generalized to other classes of knots and links by Futer, Kalfagianni and Purcell \cite{FKP:VolumeJones}.
\end{remark}

The property that the first three coefficients of the colored Jones polynomial only depend on the reduced $A$-checkerboard graph of the knot diagram holds for the whole tail of the colored Jones polynomial:

\begin{theorem}
Let $K_1$ and $K_2$ be two alternating links with alternating diagrams $D_1$ and $D_2$ such that the reduced $A$-checkerboard (respectively $B$ checkerboard) graphs of $D_1$ and $D_2$ coincide. Then
the tails (respectively heads) of the colored Jones polynomial of $K_1$ and $K_2$ are identical.
\end{theorem}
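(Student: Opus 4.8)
The plan is to reduce the statement to a purely skein-theoretic claim about the all-$B$ (or all-$A$) state of an alternating diagram, and then to show that the contribution of that state to the tail is unchanged when a twist region is lengthened. Concretely, I would first fix conventions: by the discussion preceding the theorem, lengthening a twist region corresponds to replacing parallel edges in one checkerboard graph by a single edge, while the \emph{other} checkerboard graph only changes by adding parallel edges; dually, replacing $\pm k$ half-twists by $\pm(k+\ell)$ half-twists changes the $A$-graph (say) only up to multiplicity of edges and leaves the reduced $A$-graph fixed. So it suffices to prove: if $D'$ is obtained from $D$ by adding $\ell$ half-twists to a twist region whose edge already appears in the reduced $A$-graph (equivalently, the twist region already has at least one crossing of that sign), then $T_{K'}(q) = T_K(q)$. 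Since every alternating link is a connected sum / product of prime ones and since the reduced checkerboard graph determines $D$ up to flypes and twist-region lengths, a finite sequence of such moves connects any two diagrams with the same reduced $A$-graph, so this single move suffices.

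Next I would set up the skein-theoretic computation of the tail. Decorate every component of $D$ with the $N$-th Jones–Wenzl idempotent and resolve each crossing using the $A$-smoothing versus $B$-smoothing expansion; this is the standard state-sum whose states are indexed by choices of smoothing at each crossing. The key input, which I would isolate as the main lemma, is a \emph{dominance} statement: among all states, the all-$B$ state (the one whose state graph is the $B$-checkerboard graph) contributes the lowest-degree terms, and more precisely its contribution agrees with $J_{N,K}(q)$ modulo $q^N$ up to $\pm q^s$. This is exactly the kind of estimate underlying Theorem~\ref{Armond:Walks}(2), and I would cite/adapt the degree bounds from \cite{Armond:Walks} showing that any state differing from the all-$B$ state in at least one crossing has minimal $q$-degree larger by an amount that grows with $N$ (roughly $N$ per ``wrong'' smoothing away from the dominating state), so such states do not affect the first $N$ coefficients.

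Having localized the tail to the all-$B$ state, I would analyze a twist region $R$ with $m \geq 1$ crossings. In the all-$B$ state every crossing of $R$ is smoothed the same way, producing two parallel strands running along $R$ carrying Jones–Wenzl idempotents, which by the idempotent's absorption property (the box-fusion identities recalled in Section~\ref{SectionSkeinTheory}) collapse: a string of $m$ parallel $B$-smoothed crossings contributes only an overall power of $A$ (depending on $m$ and $N$ via the twist/curl coefficient $(-1)^{\frac{a+b-c}{2}}A^{\cdots}$ factor) times the same reduced tangle one gets for $m=1$. Hence the all-$B$ state contribution of $D'$ equals that of $D$ up to $\pm A^{s}$, i.e. up to $\pm q^{s'}$, which is precisely the equivalence relation $\dot{=}_N$ allows. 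Combining with the dominance lemma gives $J_{N,K'}(q) \dot{=}_N J_{N,K}(q)$ for all $N$, hence $T_{K'} = T_K$.

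I expect the main obstacle to be making the dominance estimate rigorous and uniform in $N$: one must show not merely that the all-$B$ state has the lowest degree, but that the gap to every other state's contribution grows at least linearly in $N$, and moreover that no cancellation among sub-dominant states can conspire to reach into the first $N$ coefficients. For alternating diagrams this is controlled because the $A$- and $B$-states are the two extreme states of the Kauffman bracket state sum and the adequacy of reduced alternating diagrams prevents the leading terms from cancelling; I would import the precise form of this estimate from \cite{Armond:Walks, Armond:HeadAndTailConjecture}, where it is already established that the first $N$ coefficients of $J_{N,K}$ come from the all-$B$ state. The flype-invariance claim (that one may assume parallel edges are geometrically parallel, so that the graph-reduction really is realized by local twist-region moves) is a secondary point, but it is needed to know that sameness of reduced $A$-graphs implies the diagrams are related by the twist moves handled above; I would handle it by appealing to the Tait flyping theorem together with the observation that a flype does not change the all-$B$ state contribution.
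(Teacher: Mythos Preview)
Your approach differs from the paper's and contains a genuine gap at the key step. The dominance lemma you propose---that the all-$B$ Kauffman state of the cabled diagram (keeping the Jones--Wenzl idempotents on the original strands) agrees with $\tilde J_{n+1,K}$ to order $4(n+1)$ in $A$---is false as stated. Take the negative trefoil with $n=1$: the bracket is $A^{-7}+A^{-3}+A-A^9$, while the all-$B$ state contributes $A^{-3}\Delta_1^{\,2}=A^{-7}+2A^{-3}+A$. These disagree already at $A^{-3}$, four powers above the minimum and well within the first $4(n+1)=8$ powers of $A$. The point is that states with a single $A$-smoothing sit only four $A$-powers above the all-$B$ state (adequacy gives a gap of $4$, not $4N$), so they \emph{do} contribute to the first $N$ $q$-coefficients. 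Importing from \cite{Armond:Walks} does not help, since that paper treats the walk model rather than the Kauffman state sum; and \cite{Armond:HeadAndTailConjecture} proves $J_N\dot{=}_N J_{N+1}$, which is a different statement and one this paper explicitly avoids invoking.

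The paper's argument shares your overall shape---isolate a dominant piece, show it is independent of the twist count---but uses a different decomposition. Instead of $A/B$ smoothings it applies \emph{fusion} at each maximal negative twist region, writing $\tilde J_{n+1,L}$ as a sum over colors $0\le j_i\le n$ of $\prod_i\gamma(n,n,2j_i)^{m_i}\frac{\Delta_{2j_i}}{\theta(n,n,2j_i)}\,\Gamma_{n,(j_1,\ldots,j_k)}$, where $\Gamma$ is a trivalent graph evaluation carrying a $2j_i$-idempotent on each fused edge. Three short degree lemmas (on $\gamma$, on $\Delta_{2j}/\theta$, and on $\Gamma$ itself via the hook-length estimate for the Jones--Wenzl idempotent) establish that the term $j_1=\cdots=j_k=n$ dominates by at least $4(n+1)$. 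Since the $m_i$ enter only through the monomial $\gamma(n,n,2n)^{m_i}=A^{-m_in^2}$, the dominant graph $\Gamma_{n,(n,\ldots,n)}$ depends only on the reduced checkerboard graph. In the trefoil example this dominant term is $\theta(1,1,2)=\Delta_2=A^4+1+A^{-4}$, which (up to an $A$-shift) reads $A^{-7}+A^{-3}+A$ and does match the bracket to the required order. The crucial difference from your all-$B$ state is the extra $2n$-idempotent that fusion inserts on the collapsed twist region: it is precisely this idempotent that suppresses the sub-leading contributions your state retains. Your step~4 is fine---the all-$B$ state of an $m$-twist really is twist-independent up to a power of $A$---but the object whose twist-independence you need is $\Gamma_{n,(n,\ldots,n)}$, not the all-$B$ state.
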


\begin{proof}
First recall the computation of the colored Jones polynomial via skein theory as in Section \ref{SectionSkeinTheory}. By convention $q=A^{-4}$. Thus the meaning of head and tail interchange if we switch from the variable $A$ to $q$. A negative twist region in the knot diagram with $m$ negative half-twists correspond to $m$ parallel edges in the $B$-checkerboard graph:

$$\begin{tikzpicture}[scale=0.6, baseline=0.7cm, rounded corners=2mm]
    \draw (-.5,2) -- (-0.1, 2.4);
    \draw (0.1,2.6) -- (0.5,3);
    \draw (0.5, 2) -- (-0.5,3);
    \draw[dashed] (0, 1.5) -- (0,1.75) node[left]{\small{m}} --(0, 2);
    \draw (-.5, -0.5)  -- (-0.1,-0.1);
    \draw (.5,-0.5) -- (0,0)-- (-.5,.5) -- (-.1,0.9);
    \draw (.1,1.1)--(.5,1.5);
      \draw (0.1,0.1) -- (.5,.5)--(-.5,1.5) ;
\end{tikzpicture} \leadsto \, \, 
\begin{tikzpicture}[scale=0.6, baseline=0.7cm]
\draw (-3, 1.25) -- (0,1.25) node[above]{\small{m}} -- (3, 1.25);
\draw[dashed] (0, 2) -- (0, 2.8);
\draw[dashed] (0, 1.24) -- (0, -0.4);
\draw (-3, 1.25) .. controls (-2.5, 3.5) and (2.5,3.5) .. (3,1.25);
\draw (-3, 1.25) .. controls (-2.5, -1) and (2.5, -1) .. (3,1.25);
\end{tikzpicture}
$$

Thus to prove the theorem it is sufficient to show that the tail of  colored Jones polynomial in the variable $A$ is invariant under negative twists.
This will be done in the remainder of this section.
\end{proof}

We appeal to the notations of Section \ref{SectionSkeinTheory}.
Given an alternating diagram $D$ of a link $L$ and consider a negative twist region. Apply the identities of Section \ref{SectionSkeinTheory} to get the equation:

$$\begin{tikzpicture}[scale=0.6, baseline=0.8cm, rounded corners=2mm]
    \draw (-.5,2) -- (-0.1, 2.4);
    \draw (0.1,2.6) -- (0.5,3);
    \draw (0.5, 2) -- (-0.5,3);
    \draw[dashed] (0, 1.5) -- (0,1.75) node[left]{\small{m}} --(0, 2);
    \draw (-.5, -0.5) node[left]{\tiny{n}} -- (-0.1,-0.1);
    \draw (.5,-0.5) node[right]{\tiny{n}}-- (0,0)-- (-.5,.5) -- (-.1,0.9);
    \draw (.1,1.1)--(.5,1.5);
      \draw (0.1,0.1) -- (.5,.5)--(-.5,1.5) ;
\end{tikzpicture} \, = \sum_{j=0}^n (\gamma(n,n,2j))^m \frac{\Delta_{2j}}{\theta(n,n,2j)} 
\begin{tikzpicture}[scale=0.6, baseline=0.8cm]
    \draw (-.5,3) node[left]{\tiny{n}}-- (0, 2.5);
    \draw (0.5,3) node[right]{\tiny{n}}-- (0,2.5);
    \draw (0, 2.5) --(0,1.25) node[right]{\tiny{2j}}-- (0,0);
    \draw (0,0)--(-.5,-.5) node[left]{\tiny{n}};
    \draw (0,0)--(.5,-.5) node[right]{\tiny{n}};
\end{tikzpicture}
.$$

Here $\gamma(a,b,c):=(-1)^{\frac {a+b-c} 2} A^{a+b-c+ \frac{a^2+b^2-c^2} 2}.$

We would like to say that the tail of the left-hand side is equivalent to the tail of the right-hand side with the sum removed and $j$ replaced with $n$. However this statement is difficult to consider. Instead we will apply this operation to every maximal negative twist region to get a trivalent graph $\Gamma$. We will get a colored graph $\Gamma_{n,(j_1,\ldots,j_k)}$ where $k$ is the number of maximal negative twist regions and $0\leq j_i \leq n$ by coloring the edge coming from the $i$-th twist region by $j_i$ and coloring all of the other edges by $n$. Figure \ref{CodysFavorite} gives an example.

\begin{figure}
  \includegraphics[width=1.8in]{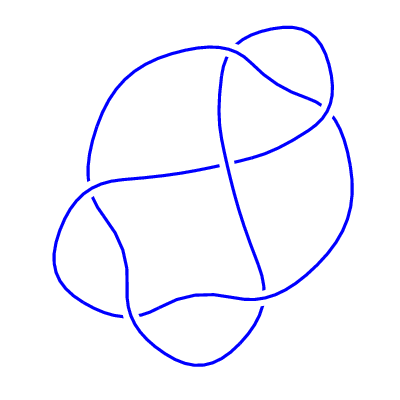} 
  \hspace {1cm}
\begin{tikzpicture}[baseline=-2.3cm, scale=1]
\draw (0,0) circle (2);
\path (0,-2) coordinate (X0);
\path (-1.732, -1) coordinate (X1);
\path (1.732, -1) coordinate (X2);
\path (-0.4, -1) coordinate (X3);
\path (0.4, -1) coordinate (X4);
\path (0, -0.6) coordinate (X6);
\path (0,-1.4) coordinate (X5);
\path (0, 0) coordinate (X7);
\path (-0.3,0.953939) coordinate (X8);
\path (0.3, 0.953939) coordinate (X9);
\path (-0.6, 1.90788) coordinate (X10);
\path (0.6, 1.90788) coordinate (X11);
\foreach \j in {0, ..., 11} \fill (X\j) circle (1pt); 
\draw (X1)--(-1, -1) node[above]{\tiny{$2j_1$}}-- (X3);
\draw (X4)--(1,-1) node[above]{\tiny{$2j_2$}}--(X2);
\draw (X0)--(0,-1.7) node[right]{\tiny{$2j_3$}}--(X5);
\draw (X5)--(X4);
\draw (X5)--(X3);
\draw (X3)--(X6);
\draw (X6)--(X4);
\draw (X6)--(0,-0.3) node[right]{\tiny{$2j_3$}} -- (X7);
\draw (X7)--(X10);
\draw (X7)--(X11);
\draw (X8) arc (107:72:1);
\draw (0.68, 1.45) node{\tiny{$2j_4$}};
\draw (-0.68, 1.45) node{\tiny{$2 j_5$}};
\end{tikzpicture}
\caption{The knot $6_2$ and the corresponding $\Gamma_{n,(j_1, \dots, j_5 )}$. All missing labels are $n$ \label{CodysFavorite}}
\end{figure}

\begin{theorem}\label{skein}
$$\tilde J_{n+1,L} \dot{=}_{4(n+1)} \Gamma_{n,(n,\ldots,n)}$$
\end{theorem}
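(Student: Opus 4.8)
The plan is to compare the skein expansion around each maximal negative twist region term by term and show that every summand except the ``all-$n$'' one contributes only to higher order in $A$. Recall that expanding the $m$-fold negative twist on two $n$-cabled strands via the identities of Section~\ref{SectionSkeinTheory} produces, for a single twist region,
$$\sum_{j=0}^n (\gamma(n,n,2j))^m \frac{\Delta_{2j}}{\theta(n,n,2j)}$$
times the trivalent diagram with the internal edge colored $2j$. Iterating over all $k$ maximal negative twist regions yields
$$\tilde J_{n+1,L} = \sum_{0\le j_1,\dots,j_k\le n}\ \prod_{i=1}^k (\gamma(n,n,2j_i))^{m_i}\,\frac{\Delta_{2j_i}}{\theta(n,n,2j_i)}\ \Gamma_{n,(j_1,\dots,j_k)},$$
where $m_i$ is the number of half-twists in the $i$-th region. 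So the statement reduces to: the term $(j_1,\dots,j_k)=(n,\dots,n)$ has the lowest $A$-degree, and in fact every other term agrees with it only past order $4(n+1)$ in $q=A^{-4}$, i.e.\ past $A$-degree difference $16(n+1)$. (I am being careful about the $q\leftrightarrow A$ conversion: $\dot{=}_{4(n+1)}$ in $A$ is what the statement says, and since $q=A^{-4}$ this is the natural normalization for the head in $A$.)

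First I would pin down the $A$-degree of the coefficient $\gamma(n,n,2j)^{m}\Delta_{2j}/\theta(n,n,2j)$ as a function of $j$. Using $\gamma(n,n,2j)=(-1)^{n-j}A^{2n-2j+(2n^2-4j^2)/2}=(-1)^{n-j}A^{2(n-j)(1+n+j)}$, one sees the $\gamma$-contribution to the minimal degree is $2m(n-j)(1+n+j)$, which is strictly decreasing in $j$ on $\{0,\dots,n\}$ and is minimized (equal to $0$) at $j=n$. Then I would bound the minimal $A$-degrees of $\Delta_{2j}$, $\theta(n,n,2j)^{-1}$ (these are Laurent polynomials in $A$; one reads off lowest degrees from the product formulas for $\Delta$ and $\theta$ given in Section~\ref{SectionSkeinTheory}), and crucially of the trivalent diagram $\Gamma_{n,(j_1,\dots,j_k)}$ itself. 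The key quantitative claim will be that lowering any single $j_i$ from $n$ to $n-1$ raises the total minimal $A$-degree by \emph{more than} $16(n+1)$ — so all such terms are invisible mod $A^{4(n+1)}$ after the standard normalization — while the trivalent diagram's own minimal degree can at worst absorb a bounded amount of this gap. The dominant saving comes from the $\gamma^{m_i}$ factor: going $j_i=n\to n-1$ changes its exponent by $2m_i\cdot 2n = 4m_i n$, and since the twist region is nugatory-free (reduced diagram) we have $m_i\ge 1$; I would need to check the degree of $\Gamma$ can increase by at most something like $4(n+1)$ when one color drops by one (an adjacency/fusion estimate), which still leaves a net gap growing with $n$ and hence $\gg 16(n+1)$ for $n$ large, and handle small $n$ directly or by a sharper constant.

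The main obstacle I anticipate is precisely controlling the minimal $A$-degree of the trivalent graph evaluation $\Gamma_{n,(j_1,\dots,j_k)}$ as the colors $j_i$ vary. Unlike the explicit coefficient factors, this evaluation is a global quantity depending on the whole graph, and I expect one must argue as follows: decompose $\Gamma$ by further fusing edges until it becomes a $\theta$-net or a sum of $\theta$- and tetrahedron-nets, track how each admissible triple's minimal degree depends on the incident colors, and show the total is a convex (or at least coordinatewise-monotone) function of $(j_1,\dots,j_k)$ minimized at $(n,\dots,n)$ with controlled ``curvature.'' An alternative, possibly cleaner route — and the one I would actually pursue first — is to avoid estimating $\Gamma$ directly: observe that each non-extremal term, \emph{including} its $\Gamma$-factor, is itself (up to normalization) a colored Jones-type evaluation of a link whose twist regions have been shortened, so one can set up an induction on $\sum_i m_i$ or on $k$, peeling off one twist region at a time and using the single-twist estimate together with the already-established existence of the tail (Theorem~\ref{Armond:Walks}) to propagate the bound. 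Either way, the heart of the argument is the single inequality ``dropping one color below $n$ costs more than $16(n+1)$ in $A$-degree,'' and once that is in place the theorem follows by summing over the $(j_1,\dots,j_k)\ne(n,\dots,n)$ terms and discarding them modulo $A^{4(n+1)}$.
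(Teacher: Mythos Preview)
Your overall strategy matches the paper's: expand $\tilde J_{n+1,L}$ as a sum over $(j_1,\ldots,j_k)$ and show that every term other than $(n,\ldots,n)$ has minimum $A$-degree at least $4(n+1)$ higher than the extremal one. (The relevant gap is $4(n+1)$ in $A$-degree, not $16(n+1)$; the statement is already written in $A$.) You also correctly isolate the $\gamma^{m_i}$ factor as the main source of the gap. The genuine problem is your handling of $d(\Gamma_{n,(j_1,\ldots,j_k)})$. Your preferred route --- viewing a non-extremal term as ``a colored Jones-type evaluation of a link whose twist regions have been shortened'' and inducting via Theorem~\ref{Armond:Walks} --- does not work: once you fuse at the $i$-th twist region, $\Gamma_{n,(j_1,\ldots,j_k)}$ with $j_i<n$ is a trivalent graph with an internal edge of color $2j_i$, not a link diagram, and there is no link whose evaluation it equals, so there is nothing to induct on. (The paper also explicitly avoids invoking the existence of the tail.) Your fallback --- reducing $\Gamma$ to $\theta$- and tetrahedron-nets and proving monotonicity of their degrees in the colors --- is a plan, not an argument; for a general alternating diagram the resulting sum of $6j$-symbols is formidable, and you give no mechanism for the claimed convexity.

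The paper's route bypasses all of this with one structural observation about the Jones--Wenzl idempotent (Proposition~\ref{idem}): in its expansion in crossingless matchings, the coefficient of a matching $M$ has minimum degree at least twice the hook word length of $M$. It follows (Lemma~\ref{graph}) that the minimum degree of $\Gamma_{n,(j_1,\ldots,j_k)}$ is realized by the term where every idempotent is replaced by the identity, so $d(\Gamma)$ is governed purely by the number of circles in that all-identity resolution. Changing one $j_i$ by $1$ changes the circle count by exactly one, hence $d(\Gamma)$ by $\pm 2$; and at $(n,\ldots,n)$ the circle count is maximal, so the first decrement \emph{raises} $d(\Gamma)$ by at least $2$. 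Combining with the elementary computations $d(\gamma(n,n,2n))\le d(\gamma(n,n,2(n-1)))-4n$ and $d(\Delta_{2j}/\theta(n,n,2j))=d(\Delta_{2(j-1)}/\theta(n,n,2(j-1)))-2$ gives a total gain of at least $4n+2+2=4(n+1)$ on the first step away from $(n,\ldots,n)$ and at least $0$ on every subsequent step --- exactly what is needed, uniformly in $n$, with no separate small-$n$ argument.
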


For the proof first note that by applying the previous equation $k$ times we get

$$\tilde J_{n+1,L} \dot{=} \sum_{j_1,\ldots,j_k=0}^n \prod_{i=1}^k (\gamma(n,n,2j_i))^m \prod_{i=1}^k \frac{\Delta_{2j_i}}{\theta(n,n,2j_i)} \Gamma_{n,(j_1,\ldots,j_k)}$$

For a rational function $R$, let $d(R)$ be the minimum degree of $R$ considered as a power series. the theorem will now follow from the following three lemmas.

\begin{lem}
\label{gamma}
\begin{eqnarray*}
d(\gamma(n,n,2n)) &\leq& d(\gamma(n,n,2(n-1))) - 4n\\
d(\gamma(n,n,2j)) &\leq& d(\gamma(n,n,2(j-1)))
\end{eqnarray*}
\end{lem}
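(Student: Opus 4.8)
The statement to prove is Lemma~\ref{gamma}, which concerns the minimum degree $d(\gamma(n,n,2j))$ of the coefficient $\gamma(n,n,2j) = (-1)^{\frac{n+n-2j}{2}} A^{n+n-2j + \frac{n^2+n^2-(2j)^2}{2}} = (-1)^{n-j} A^{2n-2j+n^2-2j^2}$. The plan is simply to extract the exponent as an explicit function of $j$ and compare consecutive values. Since $\gamma(n,n,2j)$ is a \emph{monomial} in $A$ (up to sign), its minimum degree is literally its degree, namely $d(\gamma(n,n,2j)) = 2n - 2j + n^2 - 2j^2$.

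\medskip
\noindent\textbf{Step 1: Compute the exponent explicitly.} Write $e(j) := 2n-2j+n^2-2j^2$ for the $A$-exponent of $\gamma(n,n,2j)$. I would first double-check admissibility: the triple $(n,n,2j)$ is admissible for $0 \le j \le n$ since $n+n+2j$ is even and $|n-n| = 0 \le 2j \le 2n = n+n$, so the formula applies for every $j$ in range.

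\medskip
\noindent\textbf{Step 2: Compare $e(j)$ and $e(j-1)$ for general $j$.} Compute the difference
\[
e(j-1) - e(j) = \bigl(2n - 2(j-1) + n^2 - 2(j-1)^2\bigr) - \bigl(2n - 2j + n^2 - 2j^2\bigr) = 2 - 2\bigl((j-1)^2 - j^2\bigr) = 2 + 2(2j-1) = 4j.
\]
Hence $e(j) = e(j-1) - 4j$, i.e. $d(\gamma(n,n,2j)) = d(\gamma(n,n,2(j-1))) - 4j$. Since $j \ge 1$ in both inequalities, this gives $d(\gamma(n,n,2j)) \le d(\gamma(n,n,2(j-1)))$, which is the second inequality (in fact with a strict gap of $4j$). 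For the first inequality, specialize to $j = n$: $d(\gamma(n,n,2n)) = d(\gamma(n,n,2(n-1))) - 4n$, which is the claimed bound, again as an equality.

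\medskip
\noindent\textbf{Main obstacle.} There is essentially no obstacle: the lemma is a one-line computation once one substitutes $(a,b,c) = (n,n,2j)$ into the definition $\gamma(a,b,c) = (-1)^{\frac{a+b-c}{2}} A^{a+b-c+\frac{a^2+b^2-c^2}{2}}$. The only things to be careful about are (i) recording that the inequalities in the lemma are stated as $\le$ but are actually equalities — this is harmless and in fact the strict gap $4j \ge 4$ is what will be used downstream to kill the $j < n$ terms in Theorem~\ref{skein}; and (ii) noting that $\gamma(n,n,2j)$ is a single Laurent monomial in $A$ (no cancellation, no lower-order terms), so ``minimum degree as a power series'' coincides with the exponent. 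I would state the proof in two or three lines: substitute, simplify the exponent to $2n - 2j + n^2 - 2j^2$, and read off $e(j-1) - e(j) = 4j$.
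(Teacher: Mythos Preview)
Your proof is correct and takes essentially the same approach as the paper: substitute $(a,b,c)=(n,n,2j)$ into the definition of $\gamma$, read off the exponent $2n-2j+n^2-2j^2$, and compare consecutive values. The paper computes $d(\gamma(n,n,2n))=-n^2$ and $d(\gamma(n,n,2(n-1)))=-n^2+4n$ directly rather than your difference $e(j-1)-e(j)=4j$, but the content is identical.
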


\begin{lem}
\label{fuse}
$$d\left(\frac{\Delta_{2j}}{\theta(n,n,2j)}\right) = d\left(\frac{\Delta_{2(j-1)}}{\theta(n,n,2(j-1))}\right) - 2$$
\end{lem}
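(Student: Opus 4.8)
The plan is to compute $d\!\left(\frac{\Delta_{2j}}{\theta(n,n,2j)}\right)$ directly as a function of $j$ and $n$ from the closed-form product formulas in Section \ref{SectionSkeinTheory}, and then take the difference between consecutive values of $j$. Recall that $\Delta_m = (-1)^m \frac{A^{2(m+1)}-A^{-2(m+1)}}{A^2-A^{-2}}$, so, as a Laurent series in $A$ (equivalently a power series after clearing the overall negative power), $d(\Delta_m) = -2(m+1)+2 = -2m$; hence $d(\Delta_{2j}) = -4j$ and $d(\Delta_{2j}) - d(\Delta_{2(j-1)}) = -4$. The real content is therefore the $\theta$ term.

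For $\theta(n,n,2j)$ I would use the stated formula $\theta(a,b,c) = \frac{\Delta_{x+y+z}!\,\Delta_{x-1}!\,\Delta_{y-1}!\,\Delta_{z-1}!}{\Delta_{y+z-1}!\,\Delta_{z+x-1}!\,\Delta_{x+y-1}!}$, where for $(a,b,c)=(n,n,2j)$ the internal parameters are $x = y = j$ and $z = n-j$ (from $a=y+z$, $b=z+x$, $c=x+y$). So $x+y+z = n+j$, and
\begin{equation*}
\theta(n,n,2j) = \frac{\Delta_{n+j}!\,(\Delta_{j-1}!)^2\,\Delta_{n-j-1}!}{(\Delta_{n-1}!)^2\,\Delta_{2j-1}!}.
\end{equation*}
Since $d(\Delta_m!) = \sum_{i=1}^m d(\Delta_i) = -2\sum_{i=1}^m i = -m(m+1)$, the minimum degree is additive over these products and quotients, and I can just add up the contributions: $d(\theta(n,n,2j)) = -(n+j)(n+j+1) - 2(j-1)j - (n-j-1)(n-j) + 2(n-1)n + (2j-1)2j$. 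The point is to expand this polynomial in $j$ (with $n$ fixed) and likewise for $j-1$, then subtract.

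The key step is then the arithmetic: I claim that after the dust settles, $d\!\left(\frac{\Delta_{2j}}{\theta(n,n,2j)}\right) - d\!\left(\frac{\Delta_{2(j-1)}}{\theta(n,n,2(j-1))}\right) = -2$. The $\Delta_{2j}$ part already contributes $-4$, so one needs $d(\theta(n,n,2j)) - d(\theta(n,n,2(j-1))) = -2$ as well, i.e. $d(\theta(n,n,2j))$ is linear in $j$ with slope $-2$. I would verify this by differencing the quadratic expression above: the $-(n+j)(n+j+1)$ term contributes a $j$-difference of $-2(n+j)$, the $-2(j-1)j$ term contributes $-4(j-1)$, the $-(n-j-1)(n-j)$ term contributes $+2(n-j)$, and the $+(2j-1)2j$ term contributes $+8j-4$; summing these gives $-2(n+j) -4(j-1) +2(n-j) +8j-4 = -2$, which is exactly what is needed. (The only mild subtlety is keeping track of the convention that $d$ measures minimum degree of the associated power series — i.e. after factoring out the leading negative power of $A$ — so that the $A^2-A^{-2}$ denominators in the $\Delta$'s behave uniformly; since these denominators all cancel in telescoping products $\Delta_m!$ up to a fixed power depending only on $m$, and we are taking a $j$-difference at fixed $n$, they do not affect the computation.) I do not expect a genuine obstacle here — it is a bounded, self-contained computation — but the bookkeeping of signs and the $x,y,z$ substitution is the place where an error would most easily creep in, so that is where I would be most careful.
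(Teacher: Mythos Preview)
Your approach is essentially the same as the paper's: both use the closed formula for $\theta(n,n,2j)$ with $x=y=j$, $z=n-j$ together with $d(\Delta_m)=-2m$, and then take a $j$-difference. The paper organizes the computation as a telescoping ratio $\frac{\Delta_{2j}\Delta_{2j-1}\Delta_{n-j}}{\Delta_{n+j}\Delta_{j-1}^2}$ rather than expanding $d(\theta)$ in full, but the content is identical.

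One arithmetic slip to fix: the $j$-difference of $(2j-1)2j$ is $8j-6$, not $8j-4$ (since $(2j-1)2j-(2j-3)(2j-2)=4j^2-2j-(4j^2-10j+6)=8j-6$). With your stated value $8j-4$, the displayed sum $-2(n+j)-4(j-1)+2(n-j)+8j-4$ actually equals $0$, not $-2$; replacing it by $8j-6$ gives the correct total $-2$.
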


\begin{lem}
\label{graph}
\begin{eqnarray*}
d(\Gamma_{n,(j_1,\ldots,j_{(i-1)},j_i,j_{i+1},\ldots,j_k)}) &\leq& d(\Gamma_{n,(j_1,\ldots,j_{(i-1)},j_i-1,j_{i+1},\ldots,j_k)}) \pm 2\\
d(\Gamma_{n,(n,\ldots,n,\ldots,n)}) &\leq& d(\Gamma_{n,(n,\ldots,n-1,\ldots,n)}) - 2
\end{eqnarray*}
\end{lem}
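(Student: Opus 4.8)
The plan is to prove both inequalities by a local-to-global skein argument: pass from $\Gamma_{n,(\ldots,j_i,\ldots)}$ to $\Gamma_{n,(\ldots,j_i-1,\ldots)}$ by a skein identity supported in a small disk $D_i$ around the $i$-th (fused) twist edge $e_i$, and then track minimal degrees. The two graphs agree outside $D_i$; inside, one sees $e_i$ decorated with the idempotent on $2j_i$ (resp. $2(j_i-1)$) strands, together with the two trivalent vertices $(n,n,2j_i)$ (resp. $(n,n,2(j_i-1))$) that attach $e_i$ to the four ambient $n$-colored edges. In the Temperley--Lieb picture of Section~\ref{SectionSkeinTheory}, a vertex $(n,n,c)$ with $c=2j$ sends $j$ strands of each adjacent $n$-edge through $e_i$ and joins the remaining $n-j$ strands of those two $n$-edges to one another; so decreasing $j_i$ by $1$ merely trades one through-strand for one turn-back at each of the two vertices. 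Hence everything is genuinely local to $D_i$ and the four $n$-edges leaving it.

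First I would write down the local identity. Expanding the Jones--Wenzl idempotent on the $2j_i$ strands of $e_i$ by Wenzl's recursion, peeled off as turn-back pairs (so that the colour stays even), and then absorbing each pushed-out turn-back into the adjacent $e_n$-idempotents via the bubble/$\theta$ identity, produces inside $D_i$ a finite expansion
$$\Gamma_{n,(\ldots,j_i,\ldots)} \;=\; \sum_{d=0}^{j_i-1} c_d(A)\,\Gamma_{n,(\ldots,d,\ldots)},$$
where each $c_d(A)$ is an explicit product of the quantities $\Delta_\bullet$ and $\theta(n,n,2\bullet)$ already appearing in Lemmas~\ref{gamma} and~\ref{fuse}. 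Computing $d(c_d)$ by exactly the bookkeeping that proves Lemma~\ref{fuse}, one finds that the relevant term among $d(c_d)+d(\Gamma_{n,(\ldots,d,\ldots)})$ is $d=j_i-1$, and extracting minimal degrees from the display gives $d(\Gamma_{n,(\ldots,j_i,\ldots)})\le d(\Gamma_{n,(\ldots,j_i-1,\ldots)})+2$; the reverse direction (accounting for the $\pm$) follows symmetrically, either by running the same expansion on a diagram with the roles of $j_i$ and $j_i-1$ interchanged, or from the fact that the leading coefficient $c_{j_i-1}$ cannot be cancelled because the graphs $\Gamma_{n,(\ldots,d,\ldots)}$ for distinct $d$ have Temperley--Lieb expansions supported on different underlying diagrams. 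This is the first inequality.

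For the second, strict inequality I would exploit the degeneracy of the vertex $(n,n,2n)$: there the turn-back parameter $n-j$ vanishes, so at each end of $e_i$ the two $n$-edges simply merge into $e_i$ with no turn-backs, and $\Gamma_{n,(n,\ldots,n)}$ is the ``thinnest'' configuration at $e_i$. Passing to $j_i=n-1$ forces exactly one turn-back pair into an $e_n$-idempotent at each of the two vertices, and --- by the same mechanism that produces the $-4n$ in Lemma~\ref{gamma} --- inserting a turn-back into $e_n$ raises the minimal degree by a definite amount. Comparing the $d=n$ and $d=n-1$ contributions in the local expansion and combining the degree shift of $c_{n-1}$ with this intrinsic shift of the graph gives $d(\Gamma_{n,(n,\ldots,n)}) \le d(\Gamma_{n,(n,\ldots,n-1,\ldots,n)})-2$. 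Feeding the three lemmas into the expansion $\tilde J_{n+1,L} \dot{=} \sum \prod(\gamma(n,n,2j_i))^m \prod \frac{\Delta_{2j_i}}{\theta(n,n,2j_i)}\,\Gamma_{n,(j_1,\ldots,j_k)}$ then isolates the $(n,\ldots,n)$ term and proves Theorem~\ref{skein}.

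I expect the main obstacle to be making the ``strict gain'' rigorous: showing that replacing the degenerate vertex $(n,n,2n)$ by $(n,n,2n-2)$ genuinely increases $d$, rather than merely not decreasing it, and that no cancellation among same-degree terms in the local expansion spoils this. Unlike the inputs to Lemmas~\ref{gamma} and~\ref{fuse}, which are honest closed-form expressions in $A$, this seems to require controlling the full Temperley--Lieb expansion of $\Gamma$ near $e_i$ (equivalently, invoking an adequacy/positivity property of the alternating diagram) rather than only its minimal degree, and, just as in Lemma~\ref{gamma}, the case where the trivalent vertex is extreme is exactly where the argument is delicate. A shortcut worth trying first would be to preprocess $\Gamma$ by bigon removals so that $\Gamma_{n,(\ldots)}$ becomes an explicit product of $\theta$- and tetrahedral symbols, reducing both inequalities to the same kind of degree count already carried out for Lemmas~\ref{gamma} and~\ref{fuse}.
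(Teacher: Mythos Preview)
Your proposed local identity
\[
\Gamma_{n,(\ldots,j_i,\ldots)} \;=\; \sum_{d=0}^{j_i-1} c_d(A)\,\Gamma_{n,(\ldots,d,\ldots)}
\]
cannot hold with coefficients $c_d$ depending only on $n$ and $j_i$. Restricted to a disk $D_i$ containing $e_i$ and the two adjacent trivalent vertices, the pictures for $d=0,1,\ldots,n$ are exactly the fusion basis of the skein module $S(D^2;n,n,n,n)$, hence linearly independent. Wenzl's recursion on the $2j_i$-idempotent does not produce the $(n,n,2d)$ vertices for smaller $d$: its leading term is $e_{2j_i-1}\otimes 1$, which, after absorption into the adjacent $n$-idempotents, reproduces the same $(n,n,2j_i)$ vertices rather than $(n,n,2(j_i-1))$ ones. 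So the mechanism you describe does not generate the expansion you need, and both the first inequality and its ``reverse direction'' are left without support. You in fact anticipate the real difficulty yourself: the strict gain at $j_i=n$ requires a positivity/adequacy statement about the full Temperley--Lieb expansion, not just degree bookkeeping on closed-form coefficients, and your suggested shortcut via bigon removal does not work in general since an arbitrary alternating diagram does not reduce $\Gamma$ to a product of $\theta$- and tetrahedral symbols.

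The paper's argument is different and avoids all of this. It first proves a one-line proposition (Proposition~\ref{idem}): in the expansion of the Jones--Wenzl idempotent into crossingless matchings, the coefficient of a matching $M$ has minimum degree at least twice the hook-length of $M$. Now expand \emph{every} idempotent in $\Gamma_{n,(j_1,\ldots,j_k)}$ simultaneously. Removing one hook from a term changes the number of resulting circles by exactly one and, by the proposition, does not raise the coefficient's minimum degree; iterating, the minimum degree of $\Gamma_{n,(j_1,\ldots,j_k)}$ is realized by the single term in which every idempotent is replaced by the identity. Comparing that all-identity term for $j_i$ with the one for $j_i-1$ is then just a circle count: they differ by exactly one circle, giving the $\pm 2$ in general, and when all labels are $n$ one checks that the extra circle is gained rather than lost, giving the strict $-2$. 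No local recursion on $e_i$ and no control of cancellations are needed.
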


\begin{proof}[Proof of Lemma \ref{gamma}]
\begin{eqnarray*}
\gamma(n,n,2j) &=& \pm A^{n+n-2j+\frac{n^2+n^2-(2j)^2}{2}}\\
&=& \pm A^{2n-2j+n^2-2j^2}
\end{eqnarray*}
Clearly $d(\gamma(n,n,2j))$ increases as $j$ decreases. Furthermore:
\begin{eqnarray*}
d(\gamma(n,n,2n)) &=& -n^2\\
d(\gamma(n,n,2(n-1))) &=& 2n-2(n-1)+n^2-2(n-1)^2\\
&=&-n^2 + 4n\\
\end{eqnarray*}

\end{proof}

\begin{proof}[Proof of Lemma \ref{fuse}]

To calculate $\theta(n,n,2j)$ note that in the previous formula for $\theta$ we get $x= j$, $y=j$, and $z=n-j$. Using this and the fact that $d(\Delta_n) = -2n$, we get:

\begin{eqnarray*}
d\left(\frac{\Delta_{2j}}{\theta(n,n,2j)}\right) &=& d\left(\frac{\Delta_{2j}\Delta_{n-1}!\Delta_{n-1}!\Delta_{2j-1}!}{\Delta_{n+j}!\Delta_{j-1}!\Delta_{j-1}!\Delta_{n-j-1}!}\right)\\
&=& d\left(\frac{\Delta_{2j}\Delta_{2j-1}\Delta_{n-j}}{\Delta_{n+j}\Delta_{j-1}\Delta_{j-1}}\right) + d\left(\frac{\Delta_{2(j-1)}!\Delta_{n-1}!\Delta_{n-1}!}{\Delta_{n+j-1}!\Delta_{j-2}!\Delta_{j-2}!\Delta_{n-j}!}\right)\\
&=& -4j -2(2j-1) - 2(n-j)+4(j-1) +2(n+j) + d\left(\frac{\Delta_{2(j-1)}}{\theta(n,n,2(j-1))}\right)\\
&=& -2 + d\left(\frac{\Delta_{2(j-1)}}{\theta(n,n,2(j-1))}\right)
\end{eqnarray*}

\end{proof}

Before we can prove Lemma \ref{graph}, we need to consider a property of the Jones-Wenzl idempotent. This idempotent is a linear combination of crossingless matchings,  where the coefficients are rational functions in $A$. It is well-known that the crossingless matchings form a monoid, which we will call the Temperley-Lieb monoid (this monoid is very similar to the Temperley-Lieb Algebra). The Temperley-Lieb Monoid is generated by elements $h_i$ called hooks.

\begin{proposition}
\label{idem}
The coefficient of the crossingless matching $M$ in the expansion of the Jones-Wenzl idempotent has minimum degree at least twice the minimum word length of $M$ in terms of the $h_i$'s.
\end{proposition}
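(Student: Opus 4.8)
The plan is to set up an induction on $n$ (the size of the Jones–Wenzl idempotent $f_n$) using the defining recursion displayed in Section~\ref{SectionSkeinTheory}:
$$f_{n+1} = f_n\otimes 1 - \frac{\Delta_{n-1}}{\Delta_n}\, f_n\otimes 1\, \circ\, (\text{the capcup diagram in strands } n, n+1)\, \circ\, f_n\otimes 1.$$
First I would record the base case: $f_1 = 1$, whose only crossingless matching is the identity, which has word length $0$, and the coefficient $1$ has minimum degree $0$. For the inductive step, write an arbitrary crossingless matching $M$ on $n+1$ strands and expand it via the recursion. Every crossingless matching $M'$ appearing in $f_n\otimes 1$ is (the $1$-strand stabilization of) a crossingless matching on $n$ strands, so by the inductive hypothesis its coefficient has minimum degree $\geq 2\,\ell(M')$ where $\ell$ denotes minimum word length. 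The second term introduces one cap-cup between strands $n$ and $n+1$, together with the scalar $\Delta_{n-1}/\Delta_n$, whose minimum degree I would compute to be exactly $2$ (since $d(\Delta_k) = -2k$, one gets $d(\Delta_{n-1}/\Delta_n) = -2(n-1) - (-2n) = 2$). Composing an $n$-strand matching $M'$ with this cap-cup either produces a matching of word length $\ell(M') + 1$ or collapses through Temperley–Lieb relations (a turnback absorbing a hook, at the cost of a $\Delta$-factor whose minimum degree is again $\geq 0$, in fact one contributes $d(\Delta_k)=-2k<0$, so here care is needed — see below).

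The key combinatorial fact I would isolate as a sublemma is a \emph{length–degree bookkeeping} statement: if $N_1$ and $N_2$ are crossingless matchings on $m$ strands and $N_1\circ N_2 = \Delta_{k_1}\cdots\Delta_{k_r}\cdot N_3$ in the Temperley–Lieb monoid, then
$$2\,\ell(N_3) + \sum_i d(\Delta_{k_i}) \;\geq\; 2\,\ell(N_1) + 2\,\ell(N_2) - (\text{something controlled}),$$
but this needs to be phrased correctly. In fact the cleanest route is to avoid loop factors on the wrong side: observe that in the specific recursion above, the cap-cup is \emph{sandwiched} between two copies of $f_n$, and $f_n$ kills any turnback, so no $\Delta_k$-loops with negative degree are ever actually created — the expansion of $f_n\circ(\text{cap-cup})\circ f_n$ in terms of matchings only ever \emph{adds} a hook relative to matchings already occurring in $f_n$. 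Thus each matching $M$ in $f_{n+1}$ either comes from the first term, with coefficient degree $\geq 2\ell(M)$ directly by induction, or from the second term as (scalar of degree $2$) times (a coefficient of $f_n$ of degree $\geq 2\ell(M')$) where $\ell(M) \leq \ell(M') + 1$; multiplying, the degree is $\geq 2\ell(M') + 2 \geq 2\ell(M)$. When $M$ receives contributions from both terms, the minimum degree of the sum is at least the minimum of the two, which is still $\geq 2\ell(M)$.

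The main obstacle I expect is making precise the claim that ``$f_n$ kills turnbacks'' translates into the statement that composing with $f_n$ on both sides never introduces a negative-degree loop scalar that could destroy the bound — i.e., one must verify that in the monoid expression $f_n\otimes 1\circ(\text{cap-cup})\circ f_n\otimes 1$, every crossingless matching that survives has word length exactly one more than the corresponding matching in $f_n$, with scalar coefficient of nonnegative degree. A secondary subtlety is that a single matching $M$ on $n+1$ strands may arise from several matchings $M'$ on $n$ strands through the second term, so the coefficient is a sum; but since all summands have minimum degree $\geq 2\ell(M)$, and minimum degree of a sum is bounded below by the minimum of the summands' minimum degrees (there is no cancellation issue for a \emph{lower} bound on minimum degree), the argument goes through. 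I would also need the elementary fact, which I would state explicitly, that word length in the Temperley–Lieb monoid drops by at most $1$ under left- or right-multiplication by a single hook $h_i$, so that the geometry of ``adding one cap-cup raises $\ell$ by at most $1$'' is exactly what is needed.
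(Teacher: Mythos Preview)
Your inductive setup via the Wenzl recursion is the same as the paper's, and your computation $d(\Delta_{n-1}/\Delta_n)=2$ is correct. The gap is in your ``cleanest route'': the claim that no loop factors appear because ``$f_n$ kills turnbacks'' confuses a property of $f_n$ as an element of the algebra with a property of the individual crossingless matchings in its expansion. The vanishing $f_n h_i = 0$ holds only after summing all terms; when you expand both copies of $f_n$ into matchings $M_1, M_2$ and compute $(M_1\otimes 1)\,h_n\,(M_2\otimes 1)$ term by term, circles genuinely occur. For a concrete counterexample take $n=3$ and $M_1=M_2=h_1$: one checks directly that $(h_1\otimes 1)\,h_3\,(h_1\otimes 1)=\delta\cdot h_1 h_3$ in $TL_4$, so a loop factor $\delta=-A^2-A^{-2}$ of minimum degree $-2$ does appear. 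Since you need the bound on each coefficient $c_M$ separately, you cannot appeal to a global cancellation.

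The paper does not avoid these circles; it accounts for them. Each circle contributes a factor $\delta$, lowering the minimum degree of the coefficient by $2$, but the same relation $h_i^2=\delta h_i$ that produces a circle also shortens the word by one generator; hence the resulting matching has minimal word length at least one less than before. So if the contributing term had degree $\geq 2L$ for a word of length $L$, after removing a circle one has degree $\geq 2L-2 = 2(L-1)$ against a word of length $\leq L-1$, and the inequality survives. This is exactly the content of your abandoned ``length--degree bookkeeping'' sublemma, and it \emph{is} the argument you need: formalize it as $\ell(M) + (\text{number of circles}) \leq \ell(M_1)+1+\ell(M_2)$ whenever $(M_1\otimes 1)h_n(M_2\otimes 1)=\delta^k M$, which follows from the monoid relations, and the induction closes.
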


\begin{proof}
This follows easily from the recursive definition of the idempotent by an inductive argument. The only issue is that in terms of the form 
$\begin{tikzpicture}[baseline=0.8cm]
\draw (0,0) node[right]{\small{n}}--(0,1) node[left]{\scriptsize{n-1}}
--(0,2) node[right]{\small{n}};
\draw[ultra thick, fill=white] (-0.2 ,0.5) rectangle (0.4, 0.65); 
\draw (0.2, 0.65) arc (180:0:0.25);
\draw (0.7,0) node[right]{1}--(0.7, 0.65);
\draw[ultra thick, fill=white] (-0.2 ,1.5) rectangle (0.4, 1.35); 
\draw (0.2, 1.35) arc (-180:0:0.25);
\draw (0.7,2) node [right]{\small{1}} --(0.7, 1.35);
\end{tikzpicture}$ may have a circle which needs to be removed. In this situation, the minimum degree of the coefficient is reduced by two, but the number of generators used is also reduced by one.

\end{proof}

\begin{proof}[Proof of Lemma \ref{graph}]
Consider the graph $\Gamma_{n,(j_1,\ldots,j_k)}$ viewed as an element in the Kauffman Bracket Skein Module of $S^3$. We can expand each of the Jones-Wenzl idempotents that appear as in Proposition \ref{idem}. Consider a single term $T_1$ in the expansion. Unless all of the idempotents have been replaced by the identity in this term, then there will be a hook somewhere in the diagram. By removing a single hook, we get different term $T_2$ in the expansion. The number of circles in $T_1$ differs from the number of circles in $T_2$ by exactly one. Also there are fewer hooks in $T_2$, so by Proposition \ref{idem}, the minimum degree of $T_1$ is at least as large as the minimum degree of $T_2$. If, however, $T_2$ is the term with no hooks, then $T_2$ has one more circle than $T_1$ and thus $d(T_2) \leq d(T_1) -4$. This argument implies that the minimum degree of $\Gamma_{n,(j_1,\ldots,j_k)}$ comes from the term with each idempotent replaced with the identity.

Now we must compare $d(\Gamma_{n,(j_1,\ldots,j_{(i-1)},j_i,j_{i+1},\ldots,j_k)})$ with $d(\Gamma_{n,(j_1,\ldots,j_{(i-1)},j_i-1,j_{i+1},\ldots,j_k)})$. From the previous paragraph, we know that both minimum degrees come from terms with the identities plugged into the idempotents. But comparing these two terms coming from these graphs, we see that the coefficient is $1$ in both cases, but the number of circles differ by $1$. So we get
$$d(\Gamma_{n,(j_1,\ldots,j_{(i-1)},j_i,j_{i+1},\ldots,j_k)}) \leq d(\Gamma_{n,(j_1,\ldots,j_{(i-1)},j_i-1,j_{i+1},\ldots,j_k)}) \pm 2$$

Finally $\Gamma_{n,(n,\ldots,n,\ldots,n)}$ has one more circle than $\Gamma_{n,(n,\ldots,n-1,\ldots,n)}$, so we get 
$$d(\Gamma_{n,(n,\ldots,n,\ldots,n)}) \leq d(\Gamma_{n,(n,\ldots,n-1,\ldots,n)}) - 2$$
\end{proof}

\section{The monoid of heads and tails of prime alternating links} \label{ProductFormula}

The head and the tails of the colored Jones polynomials of prime alternating links form a monoid in the following sense:

\begin{theorem} \label{Thm:monoid} Let $K_1$ and $K_2$ be two prime alternating links. Then there exist a prime alternating link $K_3$ such that the tails of the colored Jones polynomials of the three knots satisfy:
$$T_{K_1}(q) T_{K_2}(q) \dot{=} T_{K_3}(q).$$
In particular any alternating link $K_3$ whose reduced $A$-graph can be formed by gluing the reduced $A$-graphs of $K_1$ and $K_2$ along a single edge (as in Figure \ref{prod}) satisfies this statement.

A corresponding result holds for the heads.
\end{theorem}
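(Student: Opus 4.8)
The plan is to reduce the product statement to the skein-theoretic description of the tail already set up in Section \ref{DependenceOnReducedCheckerboardGraph}, and to exhibit $K_3$ explicitly as a ``connected sum along a twist region'' of $K_1$ and $K_2$. First I would make precise the gluing construction: given reduced alternating diagrams $D_1$, $D_2$ realizing the reduced $A$-checkerboard graphs $G_1'$, $G_2'$, choose an edge $e_i$ in each $G_i'$ corresponding to a twist region $R_i$ of $D_i$, and form $D_3$ by cutting each diagram open along an arc transverse to $R_i$ and splicing, so that the two chosen twist regions merge into a single twist region of $D_3$. One must check that $D_3$ is again a reduced alternating diagram (choose the splicing so the alternating pattern matches; reducedness follows since each $D_i$ was reduced and the merged twist region has at least two crossings) and that $D_3$ is prime — here one uses that $K_1$ and $K_2$ are prime, so any essential sphere in $S^3\setminus K_3$ would have to intersect the splicing annulus, and a standard innermost-circle / Menasco-type argument on the alternating diagram rules this out. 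The reduced $A$-checkerboard graph of $D_3$ is then $G_1'$ and $G_2'$ glued along the single edge $e_1=e_2$, which is exactly the picture in Figure \ref{prod}.

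Next I would compute the tail of $K_3$ using Theorem \ref{skein}. By that theorem the tail of $\tilde J_{n+1,K_i}$ in the variable $A$ is governed, up to order $4(n+1)$, by the trivalent graph $\Gamma^{(i)}_{n,(n,\dots,n)}$ obtained from $D_i$ by replacing every maximal negative twist region by a trivalent edge colored $n$. The key structural observation is that the graph $\Gamma^{(3)}_{n,(n,\dots,n)}$ attached to $D_3$ is obtained from $\Gamma^{(1)}_{n,(n,\dots,n)}$ and $\Gamma^{(2)}_{n,(n,\dots,n)}$ by joining them along a single edge colored $n$ (the merged twist region), i.e. it is the result of ``fusing'' the two graphs along an $n$-colored strand. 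Evaluating such a one-edge gluing of two trivalent graphs in the Kauffman bracket skein module of $S^3$ factors: the value of $\Gamma^{(3)}$ equals the product of the values of $\Gamma^{(1)}$ and $\Gamma^{(2)}$ divided by $\Delta_n$ (the value of the $n$-colored unknot), by the usual principle that a theta-free cut along a single Jones--Wenzl-decorated edge separates the evaluation. Dividing by $\Delta_n$ to pass from $\tilde J$ to the reduced $J$ accounts exactly for this normalization, so $J_{n+1,K_3}\dot{=}_{n+1}J_{n+1,K_1}\cdot J_{n+1,K_2}$ after the usual bookkeeping of $\pm A^s$ prefactors and the change of variable $A^{-4}=q$; taking $n\to\infty$ gives $T_{K_3}\dot{=}T_{K_1}T_{K_2}$.

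A subtle point I would be careful about is the switch between heads and tails forced by $q=A^{-4}$: Theorem \ref{skein} is stated for the tail in $A$, which is the head in $q$, so to get the claimed statement about tails of colored Jones polynomials in $q$ one applies the same argument after merging twist regions in the \emph{other} (positive) checkerboard graph, or equivalently applies the $A$-variable statement to the mirror images; the ``corresponding result for heads'' is then automatic. I would also need to verify that the leading-coefficient truncation is compatible with the product: since $J_{n+1,K_i}\dot{=}_{n+1}T_{K_i}\bmod q^{n+1}$ and the tails are power series with constant term $1$, the product of two such truncations agrees with $T_{K_1}T_{K_2}\bmod q^{n+1}$, so no loss of precision occurs when passing from the finite-$n$ identity to the limiting one.

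The main obstacle I anticipate is the factorization of the skein evaluation of the glued trivalent graph, i.e. proving cleanly that $\Gamma^{(3)}_{n,(n,\dots,n)}\dot{=}\frac{1}{\Delta_n}\Gamma^{(1)}_{n,(n,\dots,n)}\cdot\Gamma^{(2)}_{n,(n,\dots,n)}$ at the level of lowest-degree terms. One must argue that when the two graph pieces are each expanded (via Proposition \ref{idem}) into crossingless matchings, the minimum-degree contribution of the glued graph comes precisely from gluing the minimum-degree (all-identity) terms of each piece, and that the connecting $n$-colored edge decorated with the Jones--Wenzl idempotent contributes the single factor $\Delta_n$; handling the interaction of the idempotent on the shared edge with the matchings on either side, and ruling out lower-degree cross terms, is the delicate part and will likely reuse the hook-removal and circle-counting argument from the proof of Lemma \ref{graph}. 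The primeness of $K_3$ is a second, more topological obstacle, but it is essentially standard for alternating diagrams and I would relegate it to a short lemma citing the Menasco--Thistlethwaite machinery.
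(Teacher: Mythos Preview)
Your high-level strategy coincides with the paper's: pass via Theorem \ref{skein} to the all-$n$ trivalent graph and then factor its skein evaluation, handling the $q=A^{-4}$ head/tail swap by mirroring. The gap is in the factorization step itself. The merged twist region does \emph{not} appear in $\Gamma^{(3)}_{n,(n,\dots,n)}$ as ``a single edge colored $n$''; after fusion a twist region becomes a $2n$-colored edge joining two trivalent vertices. If you now draw a separating circle between the $K_1$-piece and the $K_2$-piece, it meets the graph in \emph{three} strands colored $(n,n,2n)$: the $2n$-edge of the merged twist region together with the two $n$-strands coming from the splice. (A genuine two-point $(n,n)$ cut is exactly the connected-sum situation and forces $K_3$ to be non-prime, which is what the theorem is set up to avoid.) Your claimed identity $\Gamma^{(3)}=\tfrac{1}{\Delta_n}\Gamma^{(1)}\Gamma^{(2)}$ is therefore false as stated; the correct denominator is $\theta(n,n,2n)=\Delta_{2n}$.

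The idea you are missing --- and which makes the hook-removal/circle-counting workaround unnecessary --- is that the Kauffman bracket skein module $S(D^2;n,n,2n)$ of the disk with three marked points is \emph{one-dimensional}, generated by the trivalent vertex. Write $\Gamma_i$ for the full trivalent graph of $K_i$ with one trivalent vertex (at the chosen twist region) deleted, so $\Gamma_i\in S(D^2;n,n,2n)$ and hence $\Gamma_i=R_i\cdot(\text{vertex})$ for some rational function $R_i$. Then $\bar\Gamma_i=R_i\,\theta(n,n,2n)$ while the glued graph for $K_3$ equals $R_1R_2\,\theta(n,n,2n)$, \emph{exactly}, with no lowest-degree analysis. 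The only remaining estimate is the elementary one $\theta(n,n,2n)/\Delta_n=\Delta_{2n}/\Delta_n\ \dot{=}_{4(n+1)}\ 1$, after which $J_{n+1,K_1}J_{n+1,K_2}\ \dot{=}_{n+1}\ J_{n+1,K_3}$ follows.
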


\begin{proof}
The proof of this theorem uses Theorem \ref{skein}, so because in the skein picture $q=A^{-4}$ we will consider the mirror images $K_1^*, K_2^*$ and $K_3^*$ of $K_1$, $K_2$, and $K_3$. Thus it is their reduced $B$-graphs which are related as in the statement of the theorem.

In Figures \ref{pairing} and \ref{closure}, the interior of the dotted regions represent $S(D^2,N,N,2 N)$ the Kauffman Bracket Skein Module of the disk with three colored points. This is known to be one dimensional when the three colored points are admissibly colored, generated by a single trivalent vertex \cite{Lickorish:KnotTheoryBook}. Thus any element of $S(D^2,N,N,2N)$ is some rational function times the generator. Let $\bar{\Gamma}$ be the closure of $\Gamma$ by filling in the outside of the dotted circle by a single trivalent vertex as in Figure \ref{closure}. Also define a bilinear pairing $<\Gamma_1, \Gamma_2>$ which identifies the boundaries of $\Gamma_1$ and $\Gamma_2$ as in Figure \ref{pairing}.

By Theorem \ref{skein}, there is a $\Gamma_1$ and $\Gamma_2$, such that
\begin{eqnarray*}
\Delta_n J_{K_1^*,N+1} &\dot{=}_{4(N+1)}& \bar{\Gamma}_1\\
\Delta_n J_{K_2^*,N+1} &\dot{=}_{4(N+1)}& \bar{\Gamma}_2\\
\Delta_n J_{K_3^*,N+1} &\dot{=}_{4(N+1)}& <\Gamma_1,\Gamma_2>\\
\end{eqnarray*}

By the fact that $S(D^2,N,N,2N)$ is one-dimensional, there are rational functions $R_i$ for $i=1,2$ such that $\Gamma_i = R_i *$ the trivalent vertex. Thus 
\begin{eqnarray*}
\bar{\Gamma}_1 &=& R_1 \theta(N,N,2N)\\
\bar{\Gamma}_2 &=& R_2 \theta(N,N,2N)\\
<\Gamma_1,\Gamma_2> &=& R_1 R_2 \theta(N,N,2N)\\
\end{eqnarray*}

Therefore
\begin{eqnarray*}
J_{K_1^*,N+1} &\dot{=}_{4(N+1)}& R_1 R_2 \left(\frac{\theta(N,N,2N)}{\Delta_N}\right)^2\\
&=&R_1 R_2 \left(\frac{\Delta_{2N}}{\Delta_N}\right)^2\\
&=&R_1 R_2 \left(\frac{A^{2(2N+1)}-A^{-2(2N+1)}}{A^{2(N+1)}-A^{-2(N+1)}}\right)^2\\
&\dot{=}_{4(N+1)}& J_{K_3^*,N+1}
\end{eqnarray*}

This last line is true because $\frac{A^{2(2N+1)}-A^{-2(2N+1)}}{A^{2(N+1)}-A^{-2(N+1)}}\dot{=}_{4(N+1)} 1$.

\end{proof}

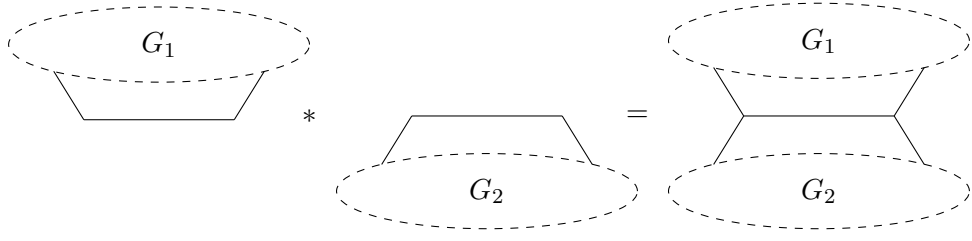
\begin{figure}
\begin{tikzpicture}[scale=1]
\draw[style=dashed]  (0,1) ellipse (2 and 0.5) node{$G_{1}$};
\draw (-1,0) -- (1,0);
\draw (1,0) -- (1.4,0.645);
\draw (-1,0)-- (-1.4,0.645);
\draw [color=white] (0,-1.45);
\draw (2,0) node {*};
\end{tikzpicture}
\begin{tikzpicture}[scale=1]
\draw[style=dashed]  (0,-1) ellipse (2 and 0.5) node{$G_{2}$};
\draw (-1,0) -- (1,0);
\draw (1,0) -- (1.4,-0.645);
\draw (-1,0)-- (-1.4,-0.645);
\draw [color=white] (0,1.45);
\draw (2,0) node {=};
\end{tikzpicture}
\begin{tikzpicture}[scale=1]
\draw[style=dashed]  (0,1) ellipse (2 and 0.5) node{$G_{1}$};
\draw (-1,0) -- (1,0);
\draw (1,0) -- (1.4,0.645);
\draw (1,0)-- (1.4,-0.645);
\draw (-1,0)-- (-1.4, 0.645);
\draw (-1,0)-- (-1.4,-0.645);
\draw[style=dashed]  (0,-1) ellipse (2 and 0.5) node{$G_{2}$};
\end{tikzpicture}
\caption{Product of two checkerboard graphs}
\label{prod}
\end{figure}

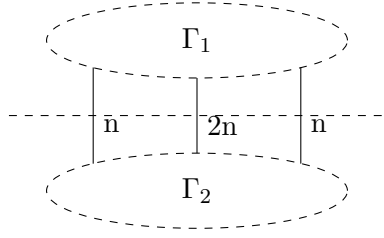
\begin{figure}
\begin{tikzpicture}[scale=1]
\draw[style=dashed]  (0,1) ellipse (2 and 0.5) node{$\Gamma_{1}$};
\draw[style=dashed] (-2.5,0) -- (2.5,0);
\draw (1.38,-0.64) -- (1.38, -0.14) node[right] {n}-- (1.38,0.64);
\draw (0,-0.5)--(0,-0.14) node[right]{2n} --   (0,0.5);
\draw (-1.38,-0.64) -- (-1.38 ,-0.14) node[right]{n}  --(-1.38,0.64);
\draw[style=dashed]  (0,-1) ellipse (2 and 0.5) node{$\Gamma_{2}$};
\end{tikzpicture}
\caption{The bilinear pairing $<\Gamma_1,\Gamma_2>$}
\label{pairing}
\end{figure}

\begin{figure}[ht]
\begin{tikzpicture}[scale=1]
\draw[style=dashed]  (0,1) ellipse (2 and 0.5) node{$\Gamma$};
\draw (0,-0.5)--(0,0.14) node[right]{2n} --   (0,0.5);
\fill (0,-0.5) circle(0.07);
\draw (-1,0.56) arc (-183:3:1);
\draw (1.0,0.14) node[right]{n};
\draw (-0.9,0.14) node[right]{n};
\end{tikzpicture}
\caption{The closure of $\Gamma$}
\label{closure}
\end{figure}
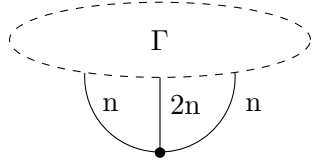

\begin{example}
Figure \ref{Fig920} depicts the knot $9_{20}$ together with its two reduced checkerboard graphs.
In the sense of the proof of Theorem \ref{Thm:monoid} the first reduced checkerboard graph is the product of two squares and a triangle. The second reduced checkerboard graph
is the product of two triangles.

Thus the head and tail functions of the colored Jones polynomial of $9_{20}$ are:
$$T_{9_{20}}(q)\dot{=} \, f(-q^2,-q)^2$$ and
$$H_{9_{20}}(q)\dot{=} \, \Psi(q^3,q)^2 f(-q^2,-q).$$
\end{example}

\section{Multiple Heads}

We single out a sample result for certain non-alternating knots:

\begin{proposition} \label{multihead}
Let $p>m$. A $(m,p)$-torus knot has one head and one tail if $m=2$ and two heads and one tail if $m>2$.
The two heads correspond to even or odd $N$.
\end{proposition}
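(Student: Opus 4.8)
The plan is to compute the colored Jones polynomial of the $(m,p)$-torus knot via Morton's formula and track the lowest-degree (respectively highest-degree) terms explicitly, much as in the proof of the left-hand side of the Andrews-Gordon identities earlier in the paper. Recall that Morton gives $J_{N,T(m,p)}(q)$ as a sum over a lattice of rank $m-1$ (a Weyl-group-type sum) of monomials $\pm q^{E(\vec r, N)}$ with $E$ quadratic in the summation variables and in $N$. The tail question (leading coefficients in the variable $q$, say for the negative torus knot) amounts to understanding which lattice points minimize $E$ for each $N$ and showing that the contribution stabilizes; since for $m=2$ this is already covered by the torus-knot theorems proved above, I would concentrate on $m>2$ and on the head (the top-degree behaviour, i.e. the tail of the mirror).

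First I would write down Morton's sum and isolate, for fixed $N$, the set of summation points achieving the extreme exponent. The key observation is that the extremizing quadratic form on the lattice has a one-parameter family of near-minima whose exponents differ by a quantity that depends on $N$ only through its parity: because the relevant shift in the lattice has even or odd length depending on $N \bmod 2$, the dominant monomial (and hence the whole string of extreme coefficients) falls into one of two congruence classes. Concretely, I would show that after normalizing by the appropriate power of $q$, the top $N$ coefficients of $J_{N,T(m,p)}$ agree with a fixed power series $H^{\mathrm{ev}}$ when $N$ is even and with a fixed power series $H^{\mathrm{odd}}$ when $N$ is odd, and that $H^{\mathrm{ev}} \ne H^{\mathrm{odd}}$ in general for $m>2$ (one can exhibit the discrepancy already in the first nonzero coefficient, or cite the $(3,p)$ computations of Hikami). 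For the tail (the other end), the same analysis shows the extremal point is unique and parity-independent, so there is a single tail; for $m=2$ the lattice has rank one and the near-minimum family collapses, giving a single head as well.

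The main obstacle I expect is bookkeeping: controlling the quadratic exponent $E(\vec r, N)$ over the rank-$(m-1)$ lattice carefully enough to prove that (i) exactly the claimed points contribute to the first $N$ coefficients, (ii) no cancellation destroys the stabilization, and (iii) the two resulting series genuinely differ. Steps (i) and (ii) are the kind of degree estimate carried out in the Andrews-Gordon proofs above and should go through by the same method — separating the part of $E$ depending only on $N$, showing the remaining "difference" term grows past order $N$ away from the extremal locus, and checking the sign of each surviving monomial is constant. Step (iii), showing the two heads are actually distinct, is most cleanly handled by a small explicit example (e.g. $T(3,4)$ or $T(3,5)$), which also pins down the parity dependence: the even and odd heads are the two power series one reads off from the two residue classes of the extremal lattice shift. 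I would close by remarking that the phenomenon is exactly the failure, for $m>2$, of the extremal locus of $E$ to be a single point — it is instead a coset that ``resonates'' with $N \bmod 2$ — whereas for $m=2$ it is always a single point.
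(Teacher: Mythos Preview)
There are two issues worth flagging.

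For the tail you are working harder than necessary. A $(m,p)$-torus knot is the closure of the positive braid $(\sigma_{m-1}\cdots\sigma_1)^p$, so Theorem~\ref{Armond:Walks}(\ref{Armond:WalksPositive}) gives $T_K(q)=1$ immediately, for every $m$ and every $N$. No analysis of Morton's exponents is needed on this side, and that is exactly what the paper does.

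For the head there is a genuine misconception. Morton's formula for the $sl_2$ colored Jones polynomial of $T(m,p)$ is \emph{not} a sum over a rank-$(m-1)$ lattice; it is a single-index sum
\[
(q^N-1)\,J_{N,K}(q)\ \dot=\ \sum_{r=-(N-1)/2}^{(N-1)/2}\Bigl( q^{\psi_{m,p}(r+1/m)} - q^{\psi_{m,p}(r)}\Bigr),\qquad \psi_{m,p}(x)=mp\,x^{2}+(m-p)x,
\]
with $r$ ranging over integers when $N$ is odd and over half-integers when $N$ is even. (The Rosso--Jones formula for $sl_n$ involves the Weyl group of $sl_n$, but here $n=2$, not $m$; the torus parameter $m$ enters only through the exponents.) Consequently the mechanism you propose---a one-parameter family of near-extrema in a higher-rank lattice whose coset ``resonates'' with $N\bmod 2$---does not exist here. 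The paper's mechanism is more elementary: the sum samples the single quadratic $\psi_{m,p}$ on the set $\{r\}\cup\{r+1/m\}$, and this set depends on the parity of $N$ because $r$ lies in $\Z$ or in $\tfrac12+\Z$. When $m=2$ the shift $1/m=\tfrac12$ interchanges these two grids, so the sampled set is parity-independent and one obtains a single head; when $m>2$ the shift $1/m$ has no such symmetry, the two parities sample genuinely different point sets near the extremum, and two heads emerge. Your overall strategy of isolating extremal exponents is sound, but you would need to start from the correct one-variable formula and locate the parity dependence in the summation range of $r$ rather than in a phantom higher-rank lattice.
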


\begin{proof}
The torus knot is the closure of a positive braid. Thus, by Theorem \ref{Armond:Walks} the tail is identical to $1$.

For the head we have:

\begin{eqnarray*}
(q^N-1) J_{N,K}(q)&\dot =& \sum_{r=-(N-1)/2}^{(N-1)/2} q^{r^2 m p + r p +r m +1}-q^{r^2 m p - r p + r m}\\
&\dot =& \sum_{r=-(N-1)/2}^{(N-1)/2}   q^{\psi_{m,p}(r+ 1/m)} - q^{\psi_{m,p}(r)}
\end{eqnarray*}

with $\psi_{m,p}(r)=r^2 m p - r p + r m.$ Here $\dot{=}$ means up a sign and up to some power of $q^{1/2}$.
\end{proof}

\begin{theorem}
For $K$ a $(4,p)$-torus knot we have \cite{Morton:ColoredJonesTorusKnots}:
$$H_{K}^{\mbox{odd}}(q^2) - q^{p-2} H_{K}^{\mbox{even}}(q^2) = f(-q^2,-q^{p-2}).$$
\end{theorem}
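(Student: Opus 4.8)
The argument is, like the earlier torus-knot results in this section, essentially a rearrangement of Morton's formula \cite{Morton:ColoredJonesTorusKnots}, starting from the identity already recorded in the proof of Proposition~\ref{multihead}. The plan is to specialize that formula to $m=4$,
$$(q^{N}-1)\,J_{N,K}(q)\;\dot{=}\;\sum_{r=-(N-1)/2}^{(N-1)/2}\Bigl(q^{\psi_{4,p}(r+1/4)}-q^{\psi_{4,p}(r)}\Bigr),\qquad \psi_{4,p}(r)=4pr^{2}-pr+4r,$$
and read the head off from it. Passing to $J_{N,K}(1/q)$, whose low-order coefficients define the head, and dividing out the prefactor, the two exponentials assemble in the limit $N\to\infty$ into a single bilateral sum in the index $r$; the summation index runs over $\mathbb{Z}$ when $N$ is odd and over $\mathbb{Z}+\tfrac12$ when $N$ is even, and this is precisely the dichotomy that produces $H^{\mathrm{odd}}_{K}$ and $H^{\mathrm{even}}_{K}$. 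Exactly as in the proof of the right-hand side of the Andrews--Gordon identities, the point that makes these limits well defined is that, after completing the square in $\psi_{4,p}$, the resulting exponents are strictly monotone in $|r|$, so the sums converge $q$-adically to honest power series.

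Next I would re-index by $R=2r$, so that $R$ runs over the even integers in the computation of $H^{\mathrm{odd}}_{K}$ and over the odd integers in the computation of $H^{\mathrm{even}}_{K}$, and then carry out the substitution $q\mapsto q^{2}$, which clears the half-integer exponents that appear in the even-$N$ case. Since a $(4,p)$-torus knot requires $\gcd(4,p)=1$, the exponent $p-2$ is odd, so $q^{p-2}H^{\mathrm{even}}_{K}(q^{2})$ is supported on odd powers of $q$ while $H^{\mathrm{odd}}_{K}(q^{2})$ is supported on even powers; the role of the specific shift $q^{p-2}$ and of the minus sign in the statement is to realign the odd-$R$ block of the even head with the even-$R$ block of the odd head, so that
$$H^{\mathrm{odd}}_{K}(q^{2})-q^{p-2}\,H^{\mathrm{even}}_{K}(q^{2})\;=\;\sum_{R\in\mathbb{Z}}(-1)^{R}\,q^{(pR^{2}+(4-p)R)/2}.$$
Finally, expanding the Ramanujan theta function from its definition (or, equivalently, applying the Jacobi triple product) gives $f(-q^{2},-q^{p-2})=\sum_{k\in\mathbb{Z}}(-1)^{k}q^{(pk^{2}+(4-p)k)/2}$, which is exactly the sum above. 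As all the series in sight are determined up to the sign and power of $q$ allowed by $\dot{=}$, and both sides begin in degree $0$ with coefficient $1$, the normalization is forced and the identity holds on the nose.

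The main obstacle is the first step: one must check carefully which monomials of $(q^{N}-1)J_{N,K}(q)$ survive into the first $N$ coefficients of $J_{N,K}(1/q)$ after the division and the inversion $q\mapsto 1/q$, verify that no unexpected cancellations spoil the picture, and do this separately for the two parities of $N$; this is the same degree bookkeeping that underlies Proposition~\ref{multihead} and the Andrews--Gordon computation. The secondary subtlety is keeping the $q^{1/2}$- and $q^{1/4}$-normalizations straight throughout, so that the even and odd heads end up matched with precisely the $R$-even and $R$-odd (equivalently, exponent-even and exponent-odd) parts of $f(-q^{2},-q^{p-2})$.
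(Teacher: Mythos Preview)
Your approach is essentially the paper's: specialize Morton's formula via Proposition~\ref{multihead} to $m=4$, separate the two parities of $N$, combine into a single alternating bilateral sum, and recognize it as $f(-q^{2},-q^{p-2})$. The one imprecision is the re-indexing $R=2r$: this accounts only for the base points $r\in\mathbb{Z}$ versus $r\in\mathbb{Z}+\tfrac12$, whereas Morton's formula contributes \emph{two} exponentials for each such $r$, namely $q^{\psi_{4,p}(r)}$ and $q^{\psi_{4,p}(r+1/4)}$, and your jump to $\sum_{R\in\mathbb{Z}}(-1)^{R}q^{(pR^{2}+(4-p)R)/2}$ does not explain where the $+\tfrac14$ shifts have gone. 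The paper absorbs all four families at once by taking $R$ so that the argument of $\psi_{4,p}$ is $R/4$, giving
\[
H_{K}^{\mathrm{odd}}(q)-q^{p/2-1}H_{K}^{\mathrm{even}}(q)=\sum_{r\in\mathbb{Z}}\Bigl(q^{\psi_{4,p}(r)}-q^{\psi_{4,p}(r+1/4)}+q^{\psi_{4,p}(r+1/2)}-q^{\psi_{4,p}(r+3/4)}\Bigr)=\sum_{R\in\mathbb{Z}}(-1)^{R}q^{\psi_{4,p}(R/4)},
\]
after which $q\mapsto q^{2}$ yields exactly your displayed sum. With that bookkeeping fixed, your argument and the paper's coincide.
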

\begin{proof}
By Proposition \ref{multihead} we have:
\begin{eqnarray*}
H_{K}^{\mbox{odd}}(q) - q^{p/2-1} H_{K}^{\mbox{even}}(q) &=& \sum_{r=-\infty}^{\infty} q^{\psi_{4.p}(r)} - q^{\psi_{4,p}(r+1/4)}+q^{\psi_{4,p}(r+1/2)}-q^{\psi_{4,p}(r+3/4)}\\
&=& \sum_{R=-\infty}^{\infty} (-1)^R q^{\psi_{4,p}(R/4)}
\end{eqnarray*}
\end{proof}

\bibliography{HeadAndTail}
\bibliographystyle {amsalpha}

\end{document}